\documentclass[11pt]{amsart}
\usepackage[utf8]{inputenc}

% Packages
\usepackage{amsmath}
\usepackage{amssymb}
\usepackage{amsfonts}
\usepackage{amsrefs}
\usepackage{amsthm}
\usepackage{mathtools}
\usepackage{multirow}
\usepackage{enumitem}
\usepackage{tikz-cd}
\usepackage{soul}
\usepackage{mathrsfs}
\usepackage{bbm}
\usepackage{float}
\usepackage{abstract}
\usepackage{colortbl,dcolumn}
\usepackage{rotating}
\usepackage{afterpage}
\usepackage{subfig}
\usepackage{pifont}
\usepackage{relsize}
\usepackage{graphicx}
\usepackage{xcolor}
\usepackage{adjustbox}
\usepackage{tabularray}
\usepackage{geometry}
\usepackage{color}
\usepackage[colorlinks=true, pdfstartview=FitV, linkcolor=blue,citecolor=blue, urlcolor=blue]{hyperref}
\usepackage[all, knot]{xy}
\usepackage{tikz}
\usetikzlibrary{arrows.meta}
\usepackage{diagbox}
\usepackage{lscape}
\usepackage{array}
\usepackage{parskip}
\usepackage{setspace}
% Margins and page setup
\geometry{a4paper,scale=0.8}
\geometry{a4paper,left=2.5cm,right=2.5cm,top=2.5cm,bottom=2.5cm}
\setlength\parindent{10pt}

\setlength\rotFPtop{0pt plus 1fil}

% Numbering and theorem environments
\numberwithin{equation}{section}
\newtheorem{Thm}{Theorem}[section]
\newtheorem{Prop}[Thm]{Proposition}
\newtheorem{Cor}[Thm]{Corollary}
\newtheorem{Lem}[Thm]{Lemma}
\theoremstyle{definition}
\newtheorem{defn}[Thm]{Definition}

\newtheorem{Rmk}[Thm]{Remark}

% New commands

\newcommand{\aff}{\mathrm{aff}}
\newcommand{\Uq}{U_{q}(\mathfrak{g})}

\newcommand{\Fcal}{\mathcal{F}}

\newcommand{\Zcal}{\mathcal{Z}}
\newcommand{\Ycal}{\mathcal{Y}}

\newcommand{\pb}{\mathbf{p}}

\newcommand{\Vaff}{V^{\mathrm{aff}}}

\newcommand{\Baff}{B^{\mathrm{aff}}}
\newcommand{\Haff}{H^{\mathrm{aff}}}

\newcommand{\Zbb}{\mathbb{Z}}

\newcommand{\Pbar}{\overline{P}}
\newcommand{\et}{\Tilde{e}}
\newcommand{\ft}{\Tilde{f}}

\newcommand{\wpr}{\bigwedge^{r}\Vaff}

\newcommand{\sign}{\mathrm{sign}}
\newcommand{\presign}{\mathrm{pre{\hbox{-}}sign}}
\newcommand{\Z}{\mathbb{Z}}
\newcommand{\BB}{B}
\newcommand{\ep}{\varepsilon}
\newcommand{\wt}{{\rm wt}}

% Title page setup

\title[Young wall models for the level 1 highest weight and Fock space crystals]
{Young wall models for the level 1 highest weight \\ and Fock space crystals of \boldmath $U_q(E_6^{(2)})$ and $U_q(F_4^{(1)})$}

\author[Shaolong Han]{Shaolong Han}
\address{Beijing International Center for Mathematical Research, Peking University, Beijing 100871, China}
\email{algebra@hrbeu.edu.cn}

\author[Yuanfeng Jin]{Yuanfeng Jin}
\address{Department of Mathematics, Yanbian University, Yanji, China}
\email{yfkim@ybu.edu.cn}

\author[Seok-Jin Kang]{Seok-Jin Kang}
\address{Korea Research Institute of Arts and Mathematics, Asan, Republic of Korea}
\email{soccerkang@hotmail.com}

\author[Duncan Laurie]{Duncan Laurie}
\address{Mathematical Institute, University of Oxford, Oxford, United Kingdom}
\email{duncan.laurie@maths.ox.ac.uk}

\keywords{exceptional quantum affine algebra, perfect crystal, energy function, Young column, Young wall, Fock space}

\subjclass[]{05E10, 17B37, 20G42}

% coloring cells of tables
\colorlet{lightgreen}{white!55!green}
\colorlet{verylightgreen}{white!75!green}
\colorlet{lightorange}{white!65!orange}
\colorlet{verylightorange}{white!65!orange}
\colorlet{darkgreen}{black!10!green}
\colorlet{lightcyan}{white!75!cyan}

\def\cell#1{%
\ifdim#1pt=0pt\cellcolor{white}\else
\ifdim#1pt=1pt\cellcolor{verylightgreen}\else
\ifdim#1pt=2pt\cellcolor{lightorange}\else
\ifdim#1pt=-1pt\cellcolor{verylightred}\else
\cellcolor{lightcyan}\fi\fi\fi\fi
#1}

% colors of crystal arrows and blocks of Young columns / walls
\definecolor{electricpurple}{rgb}{0.75, 0.0, 1.0}
\definecolor{shockingpink}{rgb}{0.99, 0.06, 0.75}
\definecolor{saddlebrown}{rgb}{0.55, 0.27, 0.07}
\definecolor{yellow(ncs)}{rgb}{1.0, 0.83, 0.0}

\colorlet{col0}{black}
\colorlet{col1}{red}
\colorlet{col2}{yellow(ncs)}
\colorlet{col3}{green!75!black}
\colorlet{col4}{electricpurple}
\colorlet{col5}{blue}
\colorlet{col6}{orange}
\colorlet{col7}{shockingpink}
\colorlet{col8}{saddlebrown!80!white}

% Displaying SageMath code

\usepackage{color}
\usepackage{listings}
\usepackage{xspace}

\lstdefinelanguage{Sage}[]{Python}
{morekeywords={True,False,sage,singular},
sensitive=true}
\lstset{frame=none,
          showtabs=False,
          showspaces=False,
          showstringspaces=False,
          commentstyle={\ttfamily\color{dredcolor}},
          keywordstyle={\ttfamily\color{dbluecolor}\bfseries},
          stringstyle ={\ttfamily\color{dgraycolor}\bfseries},
          language = Sage,
	  basicstyle={\small \ttfamily},
	  aboveskip=.5em,
	  belowskip=.5em
          }
\definecolor{dblackcolor}{rgb}{0.0,0.0,0.0}
\definecolor{dbluecolor}{rgb}{.01,.02,0.7}
\definecolor{dredcolor}{rgb}{0.8,0,0}
\definecolor{dgraycolor}{rgb}{0.30,0.3,0.30}

\begin{document}

\maketitle

\begin{abstract}
\vspace{4pt}
In this paper we construct Young wall models for the level $1$ highest weight and Fock space crystals of quantum affine algebras in types $E_6^{(2)}$ and $F_4^{(1)}$.
Our starting point in each case is a combinatorial realization for a certain level $1$ perfect crystal in terms of Young columns.
Then using energy functions and affine energy functions we define the notions of reduced and proper Young walls, which model the highest weight and Fock space crystals respectively.

\end{abstract}

\vspace{10pt}

\begin{spacing}{1.25}
\tableofcontents	
	\end{spacing}

\newpage

\section*{Introduction}

The theory of crystal bases introduced by Kashiwara \cites{Kas90,Kas91} provides a powerful tool for studying the representation theory of quantum groups.
These crystal bases can be seen as the $q = 0$ limits of global bases, and possess a host of combinatorial features that reflect the internal structures of the corresponding representations.
On the other hand, Lusztig developed the theory of canonical bases using a more geometric approach \cites{Lus90,Lus91}.

The combinatorial aspects of crystal bases can often transport abstract algebraic problems about quantum groups and their representations into far more tractable settings.
For example, the characters and tensor decompositions of integrable modules can be expressed using explicit descriptions of their crystals.
Constructing concrete realizations of crystal bases is therefore an important research topic in this area.

Young tableau and Young wall models provide a particularly intuitive and easy-to-operate class of such realizations.
In all non-exceptional finite types, Kashiwara and Nakashima \cite{KN94} described the crystal bases of all finite dimensional $\Uq$-modules in terms of generalised Young tableaux.
Moreover, Kang and Misra \cite{KM94} provided a similar construction in type $G_{2}$.

In order to approach the case of quantum affine algebras, Kang et al \cites{KMN1,KMN2} used the theory of perfect crystals to construct a \textit{path realization} for the irreducible highest weight crystals $B(\lambda)$, known as the Kyoto path model.
Here, paths are infinite sequences of elements inside some (finite) perfect crystal which stabilise to a certain ground-state path.

Using this path realization, Kang \cite{Kang03} obtained Young wall models for the level $1$ highest weight crystals of quantum affine algebras in types $A_{n}^{(1)}$, $A_{2n-1}^{(2)}$, $A_{2n}^{(2)}$, $B_{n}^{(1)}$, $D_{n}^{(1)}$ and $D_{n+1}^{(2)}$.
The remaining non-exceptional affine type $C_n^{(1)}$ was subsequently addressed in work of Hong-Kang-Lee \cite{HKL04}.
Furthermore, this programme was later generalised to arbitrary level by Kang and Lee in \cites{KL06,KL09}.

As for the exceptional affine types, Fan-Han-Kang-Shin \cite{FHKS23} built Young wall models for the level $1$ irreducible highest weight crystals in types $D_4^{(3)}$ and $G_2^{(1)}$ using the perfect crystals of \cite{KMOY07} and \cites{JM11,MMO10,Yam98} respectively.
Moreover, Laurie \cite{Lau23} recently constructed such models in types $E_{6}^{(1)}$, $E_{7}^{(1)}$ and $E_{8}^{(1)}$.

Fock space representations $\Fcal(\lambda)$ of quantum affine algebras -- initially studied in type $A$ by Kashiwara, Miwa and Stern \cites{KMS95,Ste95} -- were developed in the general setting by Kashiwara-Miwa-Petersen-Yung \cite{KMPY96}.
They are obtained by affinizing a finite dimensional module $V$ (which must satisfy certain assumptions), and then taking the semi-infinite limit of the $q$-exterior powers along a vacuum vector.
In particular, Kashiwara \cite{Kas02} showed that it suffices for $V$ to be a \textit{good module}.

It was shown that the crystal basis $B(\Fcal(\lambda))$ of the Fock space can be expressed in terms of the crystal basis $B$ for $V$ and its energy function $H$.
Since $B$ must be a perfect crystal, it is natural to seek realizations for $B(\Fcal(\lambda))$ in terms of
Young walls, similar to those for $B(\lambda)$.

Such models for the level $1$ Fock space crystals were constructed by
Kang-Kwon, Kim-Shin, Misra-Miwa, and Premat
in types $A_{n}^{(1)}$, $A_{2n-1}^{(2)}$, $A_{2n}^{(2)}$, $B_{n}^{(1)}$, $D_{n}^{(1)}$, $D_{n+1}^{(2)}$, $C_{n}^{(1)}$ (cf. \cites{MM90,KK03,KK04,KK08,Pre04,KS04}) and by Laurie in types $E_{6}^{(1)}$, $E_{7}^{(1)}$, $E_{8}^{(1)}$ (cf. \cite{Lau23}).
Moreover, via a similar treatment to \cite{Lau23}*{§5} or Section \ref{Sec:Fock space Young wall realization} of this paper, the existing Young wall models for $B(\lambda)$ of level $1$ in types  $D_{4}^{(3)}$ and $G_{2}^{(1)}$ can be readily adapted to provide Young wall models for $B(\Fcal(\lambda))$.

The aim of this paper is to construct Young wall models for the level $1$ irreducible highest weight and Fock space crystals in the final remaining affine types $E_{6}^{(2)}$ and $F_{4}^{(1)}$, thereby completing this section of the story.

In many types, the level $1$ perfect crystal used as a starting point is the crystal basis of some level $0$ fundamental representation associated to a non-zero minuscule vertex of the affine Dynkin diagram.
For example, this is the case in \cite{Kang03} and also for types $E_{6}^{(1)}$ and $E_{7}^{(1)}$ in \cite{Lau23}.
However, in other types no such vertex exists and so we must look elsewhere for an appropriate crystal.
In this paper, as for type $E_{8}^{(1)}$ in \cite{Lau23}, we shall use a uniform construction due to Benkart-Frenkel-Kang-Lee \cite{BFKL06}.

Our strategy is as follows.
We first realize the level $1$ perfect crystal in each type in terms of equivalence classes of Young columns.
These Young columns are certain stackings of colored blocks within a relevant \textit{Young column pattern}, which is obtained by splitting an infinite vertical strip of cuboids into building blocks of various shapes.
Young columns are equivalent if they can be obtained from one another via vertical shift or $180^{\circ}$ rotation around the vertical axis, and their crystal structure is defined simply in terms of adding and removing blocks.

Following the descriptions of $B(\lambda)$ and $B(\mathcal{F}(\lambda))$ in terms of our level $1$ perfect crystal, we arrange infinitely many Young column patterns right to left in order to create a \textit{Young wall pattern}.
Similarly, we can form the ground-state wall and thus introduce the notion of a Young wall stacked inside the Young wall pattern.

With the help of energy functions on our level $1$ perfect crystals, we define the sets of \textit{reduced} Young walls and \textit{proper} Young walls.
Using the tensor product rule for Young walls these can each be endowed with the structure of an affine crystal, with the Kashiwara operators given by adding and removing colored blocks.
Moreover we show that the resulting crystals provide combinatorial Young wall realizations for the crystal bases $B(\lambda)$ and $B(\mathcal{F}(\lambda))$ respectively.
Furthermore, in order to better understand and determine these reduced and proper Young walls, we prove certain structural results including a particular \textit{right block property} in each case.

This paper is organized as follows.
Section \ref{Sec:Crystals} recalls some necessary preliminaries regarding crystals for quantum affine algebras, in particular concerning perfect crystals, energy functions, and the path realization of highest weight crystals.
We also briefly summarise the construction of the Fock space representations as semi-infinite limits of $q$-exterior powers, as well as the corresponding characterization of their crystal bases.

Section \ref{Sec: Perfect crystal of E and F} describes the level $1$ perfect crystal of Benkart-Frenkel-Kang-Lee in types $E_6^{(2)}$ and $F_4^{(1)}$, and provides illustrations of the crystal graph in each case.
Furthermore, the energy function values -- which we calculate using SageMath \cite{SageMath} -- are displayed in Appendix \ref{appe:HandH'values}.

In Section \ref{Sec:Young column models} we build the Young column patterns for types $E_6^{(2)}$ and $F_4^{(1)}$ out of colored building blocks, and define the set of Young columns stacked inside each pattern.
We then realize our level $1$ perfect crystals in terms of equivalence classes of these Young columns, and provide complete lists of all the classes.
Appendix \ref{realization of B and B'} contains the crystal graph of each Young column realization.

We begin Section \ref{Sec:highest weight Young wall realization} by first constructing the Young wall patterns and ground-state walls for types $E_6^{(2)}$ and $F_4^{(1)}$, which allows us to define the notion of a Young wall.
We then use the energy functions on our level $1$ perfect crystals to define the reduced Young walls via a combinatorial condition on adjacent columns.
We present structural results for these walls, and show that the set of reduced Young walls has an affine crystal structure.
Moreover we prove that this crystal provides a Young wall realization for the level $1$ highest weight crystal in each type.

Similarly, in Section \ref{Sec:Fock space Young wall realization} we give the definition of a proper Young wall in terms of a combinatorial energy function condition.
We endow the set of proper Young walls with the structure of an affine crystal, and show that it is isomorphic to the level $1$ Fock space crystal in each type.
In Section \ref{proper structure} we study the structure of these proper Young walls in more detail, for example determining when adjacent columns satisfy the right block property.
Appendix \ref{top crystal} displays the top part of our Young wall models for $B(\lambda)$ and $B(\mathcal{F}(\lambda))$ in each type.

\noindent
{\bfseries  Acknowledgements.}
S.-J. Kang would like to thank Professor Zhaobing Fan for his hospitality during a stay at Harbin Engineering University in November 2023.
D. Laurie was financially supported by the Engineering and Physical Sciences Research Council [grant number EP/T517811/1].

\section{Crystals of quantum affine algebras}\label{Sec:Crystals}
\subsection{Perfect crystals}
Let us fix some basic notations.
\begin{itemize}
\item[\textendash] $I = \{0,1,...,n\}$: index set.
\item[\textendash] $A=(a_{ij})_{i,j\in I}$: affine Cartan matrix.
\item[\textendash] $D={\rm diag}\{s_i\in \mathbb Z_{>0}\mid i\in I\}$: diagonal matrix such that $DA$ is symmetric.
\item[\textendash] $P^{\vee} =(\oplus_{i\in I}\mathbb Z h_i) \oplus
\Z d$: dual weight lattice.
\item[\textendash] $\mathfrak{h}=\mathbb C \otimes_{\mathbb Z}
P^{\vee}$: Cartan subalgebra.
\item[\textendash] $P=\{\lambda \in
\mathfrak{h}^* \mid \lambda(P^{\vee})\subset \Z\}$: affine weight lattice.
\item[\textendash] $\Pi^\vee=\{h_i\mid i\in I\}\subset P^\vee$: the set of simple coroots.
\item[\textendash] $\Pi=\{\alpha_i\mid i\in I\}\subset P$: the set of simple roots.
\item[\textendash] $\mathfrak g$: affine Lie algebra associated to the Cartan datum $(A, P^{\vee}, P, \Pi^\vee, \Pi)$.
\item[\textendash] $\delta$, $c$, $\Lambda_i\ (i\in I)$: null root, canonical central element, fundamental weights.
\item[\textendash] $P^{+}$: the set of affine dominant integral weights.
\item[\textendash] $\Pbar=\oplus_{i\in I}\mathbb Z \Lambda_i$: the set of classical weights.
\item[\textendash] $\Pbar^{+}$: the set of classical dominant integral weights.
\item[\textendash] $l=\lambda(c)$: the level of an affine or classical dominant integral weight $\lambda$.
\end{itemize}

We furthermore define
\begin{align*}
q_i = q^{s_i},\quad \left[n\right]_i=\dfrac{q_i^n-q_i^{-n}}{q_i-q_i^{-1}},\quad
\left[m\right]_i!=\left[m\right]_i\left[m-1\right]_i \dots
\left[1\right]_i,\quad \left[0\right]_i!=1,
\end{align*}
for each $i\in I$, $n\in\mathbb Z$ and $m\in \mathbb Z_{> 0}$.

The {\it quantum affine algebra} $U_q(\mathfrak{g})$ is the unital associative algebra over $\mathbb Q(q)$
generated by $e_i, f_i$ $(i\in I)$ and $q^h$ $( h \in P^{\vee})$,
subject to the following defining relations.
\begin{enumerate}
	
	\item $q^0 = 1,\ q^h q^{h^{\prime}} = q^{h + h^{\prime}}$  for all  $h,h^{\prime} \in P^{\vee}$,
	
	\item  $q^h e_i q^{-h} = q^{\alpha_i(h)}e_i,\ q^h f_i q^{-h} = q^{-\alpha_i(h)}f_i$  for all  $h \in P^{\vee}$,
	
	\item  $e_i f_j - f_j e_i = \delta_{ij} \dfrac{K_i - K_i^{-1}}{q_i - q_i^{-1}}$  for all  $i,j \in I$,
	
	\item $\displaystyle\sum_{k=0}^{1-a_{ij}}(-1)^k e_{i}^{(1-a_{ij}-k)} e_j e_{i}^{(k)} = 0 $ whenever $i \ne j$,

	\item $\displaystyle\sum_{k=0}^{1-a_{ij}}(-1)^k f_{i}^{(1-a_{ij}-k)} f_j f_{i}^{(k)} = 0 $  whenever $i \ne j$,
\end{enumerate}
where $e_i^{(k)}= \frac{e_i^k}{\left[k\right]_{i}!}$,
$f_i^{(k)} = \frac{f_i^k}{\left[ k \right]_{i}!}$ and $K_i = q_i^{h_i}$ for each $i\in I$ and $k\in\mathbb{Z}_{\geq 0}$.

We denote by $U_q'(\mathfrak{g})$ the subalgebra of
$U_q(\mathfrak{g})$ generated by $e_i,f_i,K_i^{\pm1}$ $(i \in I)$.

Both $U_q(\mathfrak{g})$ and $U_q'(\mathfrak{g})$ have a coproduct $\Delta$ given by
\begin{align*} \label{coproduct}
    \Delta(q^{h}) = q^{h}\otimes q^{h}, \,\,
    \Delta(e_{i}) = e_{i}\otimes K_{i}^{-1} + 1\otimes e_{i}, \,\,
    \Delta(f_{i}) = f_{i}\otimes 1 + K_{i}\otimes f_{i},
\end{align*}
which shall be used for the construction of Fock space representations.

\begin{Rmk}
    Our coproduct $\Delta$ is as in \cites{KMN1,Kas02,Lau23}, whereas that of \cite{KMPY96} is obtained by exchanging the tensor factors -- see \cite{KMPY96}*{\S 2.2} for more details.
\end{Rmk}

\begin{defn} \label{def:crystal}
	An {\it affine crystal} (resp. a {\it classical crystal}) is
	a set $\BB$ together with maps $\wt : \BB \rightarrow P$
	(resp. $\wt: \BB \rightarrow \Pbar$), $\tilde{e}_i,
	\tilde{f}_i: \BB \rightarrow \BB \cup \{ 0 \}$ and $\varepsilon_i,
	\varphi_i : \BB \rightarrow \mathbb{Z} \cup \{-\infty\}$  $(i \in I)
	$ satisfying the following conditions.
	\begin{enumerate}
		\item  $\varphi_{i}(b) = \varepsilon_{i}(b) + \langle h_i, \wt
		(b) \rangle$ \ for all  $ i \in I$,
		
		\item  $ \wt(\tilde {e}_i b) = \wt (b) + \alpha_i$ if  $\tilde{e}_i b \in \BB$,
		
		\item  $ \wt(\tilde{f}_i b) = \wt(b) - \alpha_i$ if  $\tilde{f}_ib \in \BB$,
		
		\item  $\varepsilon_i(\tilde{e} _ib) = \varepsilon_i(b) - 1$,
		$\varphi_i(\tilde{e}_i b) = \varphi_i(b) + 1$ if $\tilde{e}_ib \in
		\BB$,

		\item  $\varepsilon_i(\tilde{f} _ib) = \varepsilon_i(b) + 1$,
		$\varphi_i(\tilde{f}_i b) = \varphi_i(b) - 1$ if  $\tilde{f}_ib \in
		\BB$,
		
		\item  $\tilde{f}_i b = b'$ if and only if $b =
		\tilde{e}_i b'$ for all $b,b'\in \BB$ and $i \in I $,
		
		\item  If $\varphi_i(b) = -\infty$  for  $b\in \BB$, then
		$\tilde{e}_i b = \tilde{f}_i b = 0$.
		
	\end{enumerate}
\end{defn}

\begin{defn}
	A {\it crystal morphism} $\Psi : B \rightarrow B'$ between two affine or classical crystals is a map
	$\Psi : B \cup \{0\} \rightarrow B' \cup \{0\}$ such that
	\begin{enumerate}
		\item  $\Psi(0)=0$,
		\item if $b \in B$ and $\Psi(b) \in B'$, then $\wt(\Psi(b))=\wt(b)$, $\varepsilon_i(\Psi(b))=\varepsilon_i(b)$ and $\varphi_i(\Psi(b))=\varphi_i(b)$ for all $i \in I$,
		\item if $b, b' \in B$, $\Psi(b), \Psi(b') \in B'$ and $\tilde{f}_i b=b'$, then $\tilde{f}_i \Psi(b)=\Psi(b')$ and $\Psi(b)=\tilde{e}_i \Psi(b')$ for all $i \in I$.
	\end{enumerate}
	Moreover $\Psi$ is an {\it isomorphism} if $\Psi : B\cup\lbrace 0\rbrace \rightarrow B'\cup\lbrace 0\rbrace$ is a bijection.
\end{defn}

We define the {\it tensor
	product} $B \otimes B'$ of two affine or classical crystals $B$ and $B'$ to be the set $B \times B' $
with a crystal structure given by
\begin{equation} \label{tensor product of crystals}
	\begin{aligned}
		\tilde {e}_i(b \otimes b')&= 
\begin{cases} \tilde {e}_i b \otimes b' &\text{if \;$\varphi_i(b)\ge \varepsilon_i(b')$}, \\[-.5ex] 
			b\otimes \tilde {e}_i b' & \text{if\; $\varphi_i(b) < \varepsilon_i(b')$}, 
\end{cases} \\[-.5ex] 
		\tilde {f}_i(b \otimes b')&= 
\begin{cases} \tilde {f}_i b \otimes b'
			&\text{if \; $\varphi_i(b) > \varepsilon_i(b')$}, \\[-.5ex] 
			b\otimes \tilde {f}_i b' & \text{if \; $\varphi_i(b) \le
				\varepsilon_i(b')$},
\end{cases}\\	
		\wt(b \otimes b')&= \wt(b)+\wt(b'),\\
		\ep_{i}(b \otimes b')&= \max(\ep_{i}(b), \ep_i(b') -
		\langle h_i, \wt(b) \rangle ),\\
		\varphi_{i}(b \otimes b')& = \max(\varphi_{i}(b'), \varphi_i(b)
		+\langle h_i, \wt(b')\rangle ).
	\end{aligned}
\end{equation}

We next introduce the notion of a perfect crystal as developed in \cites{KMN1,KMN2} by Kang et al.
For each element $b$ of a classical crystal $B$, define associated classical weights
\begin{eqnarray*}
	\varepsilon(b) = \displaystyle\sum_{i \in I} \varepsilon_i(b)\Lambda_i, &
	\varphi(b) = \displaystyle\sum_{i \in I} \varphi_i(b)\Lambda_i.
\end{eqnarray*}

\begin{defn}\label{def:perfect crystal}
	Let $l$ be a positive integer. A classical crystal $\BB$ is
	called a \textit{perfect crystal of level $l$} if
	
	\begin{enumerate}
		\item  there exists an irreducible finite dimensional $U'_q (\mathfrak{g})$-module with a crystal basis isomorphic to $\BB$,
		
		\item  $\BB \otimes \BB$ is connected,
		
		\item  there exists a classical weight $ \lambda_0 \in \Pbar$
		such that 
		$$\displaystyle \wt(\BB) \subset \lambda_0 + \sum_{i \ne 0}
		\mathbb{Z}_{\le0} \: \alpha_i, \quad
		\#(\BB_{\lambda_0})=1,$$ 
		where
		$\BB_{\lambda_0}=\{ b \in \BB ~ | ~ \wt(b)=\lambda_0 \}$,
		
		\item  $\varepsilon(b)(c)
		\ge l $ for all $b \in \BB$,
		
		\item  for any $ \lambda \in \Pbar ^{+}$ with $\lambda(c)= l$ there
		exist  unique $ b^{\lambda}\in B $ and $b_{\lambda} \in B$ with
		$\varepsilon(b^{\lambda})  = \varphi(b_{\lambda})= \lambda.$
		
	\end{enumerate}
\end{defn}

The vectors $b^\lambda$ and $b_\lambda$ are called the \textit{minimal vectors}.

\subsection{Path realization of highest weight crystals} \label{section:highest weight crystals}
The importance of these perfect crystals is demonstrated by the following results.

\begin{Thm} [{\cite{KMN1}}]
	Let $\BB$ be a perfect crystal of level $l \in
	\mathbb{Z}_{>0}$. For any $\lambda \in \Pbar^+$ with
	$\lambda(c)= l$ there exists a unique classical crystal
	isomorphism
	\begin{equation*}
		\begin{aligned}
			\Psi :\BB(\lambda) \stackrel{\sim}{\longrightarrow}
			\BB(\ep(b_\lambda)) \otimes \BB &
			&\text{given by}& &  u_\lambda  \longmapsto  u_{\ep(b_\lambda)} \otimes b_\lambda ,
		\end{aligned}
	\end{equation*}
	where $u_\lambda$ is the highest weight vector in $\BB(\lambda)$ and $b_\lambda$ is the unique vector in $\BB$ such that $\varphi(b_{\lambda})=\lambda$.
\end{Thm}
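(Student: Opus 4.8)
The plan is to identify $u_{\ep(b_\lambda)}\otimes b_\lambda$ as the unique highest weight vector of the tensor product crystal, and then to promote this to the desired isomorphism using the decomposition of a normal crystal into highest weight components. Throughout write $\mu=\ep(b_\lambda)\in\Pbar^+$. Since $\BB$ is the crystal basis of a finite-dimensional, hence level-zero, $U_q'(\mathfrak{g})$-module, we have $\langle c,\wt(b)\rangle=0$ for every $b\in\BB$; in particular $\langle c,\mu\rangle=\langle c,\ep(b_\lambda)\rangle=\langle c,\varphi(b_\lambda)\rangle=\langle c,\lambda\rangle=l$, so that $\mu$ is itself a level $l$ dominant weight and $\BB(\mu)$ is defined.

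First I would compute the weight of the proposed image. From axiom (1) of Definition \ref{def:crystal}, $\langle h_i,\wt(b)\rangle=\varphi_i(b)-\ep_i(b)$ for all $i$, whence $\wt(b_\lambda)=\varphi(b_\lambda)-\ep(b_\lambda)=\lambda-\mu$. Therefore $\wt(u_\mu\otimes b_\lambda)=\mu+(\lambda-\mu)=\lambda$, matching $\wt(u_\lambda)$.

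The core step is to show that, as $b$ ranges over $\BB$, the element $u_\mu\otimes b$ is annihilated by every $\tilde e_i$ if and only if $b=b_\lambda$. Using $\tilde e_i u_\mu=0$ and $\varphi_i(u_\mu)=\langle h_i,\mu\rangle=\ep_i(b_\lambda)$, the tensor product rule \eqref{tensor product of crystals} gives $\tilde e_i(u_\mu\otimes b)=0$ exactly when $\varphi_i(u_\mu)\ge\ep_i(b)$, i.e. when $\ep_i(b)\le\ep_i(b_\lambda)$; hence $u_\mu\otimes b$ is highest weight precisely when $\ep(b)\le\mu$ coordinatewise. To pin down $b$ I would then pair the dominant weight $\mu-\ep(b)=\sum_i(\ep_i(b_\lambda)-\ep_i(b))\Lambda_i$ with the central element $c$: since $\langle c,\Lambda_i\rangle>0$ for all $i$, the inequality $\ep(b)\le\mu$ forces $\langle c,\ep(b)\rangle\le\langle c,\mu\rangle=l$, while axiom (4) of Definition \ref{def:perfect crystal} gives $\langle c,\ep(b)\rangle\ge l$. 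Equality throughout yields $\ep(b)=\mu=\ep(b_\lambda)$, and the uniqueness clause (5) of Definition \ref{def:perfect crystal} then forces $b=b_\lambda$. Thus $u_\mu\otimes b_\lambda$ is the unique highest weight vector of $\BB(\mu)\otimes\BB$, and it has weight $\lambda$.

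Finally I would deduce the isomorphism. As a tensor product of crystal bases of integrable modules, $\BB(\mu)\otimes\BB$ is a normal classical crystal, so it decomposes into connected components, each of which is isomorphic to some $\BB(\nu)$ and contains a single highest weight vector. Since we have just shown that there is exactly one highest weight vector overall, the crystal is connected and must be isomorphic to $\BB(\lambda)$, with the isomorphism sending the source $u_\lambda$ to the source $u_\mu\otimes b_\lambda$. Uniqueness of $\Psi$ is then immediate: any crystal morphism out of $\BB(\lambda)$ is determined by the image of $u_\lambda$, which is forced to be the unique highest weight vector of weight $\lambda$, and the remaining values are obtained by applying the $\tilde f_i$. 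I expect the main obstacle to lie in this last step, namely rigorously justifying that $\BB(\mu)\otimes\BB$ splits into highest weight components so that a unique source guarantees connectedness; making this precise relies on the normality guaranteed by axiom (1), and a self-contained argument would instead verify connectedness directly by lowering an arbitrary $u_\mu\otimes b'$ down to $u_\mu\otimes b_\lambda$, where the finiteness of $\BB$ and axioms (2)–(5) carry the real weight.
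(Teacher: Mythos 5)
The paper states this theorem without proof, citing \cite{KMN1}, so your argument must be measured against the original one. Writing $\mu=\ep(b_\lambda)$, the first two thirds of your proposal are correct and coincide with the standard opening moves: $\mu$ is dominant of level $l$, $\wt(b_\lambda)=\lambda-\mu$ gives $\wt(u_\mu\otimes b_\lambda)=\lambda$, the tensor product rule together with seminormality of $\BB$ shows that $u_\mu\otimes b$ is killed by every $\tilde e_i$ precisely when $\ep(b)\le\mu$, and pairing with $c$ (where $\langle c,\Lambda_i\rangle=a_i^\vee>0$) against axiom (4) of Definition \ref{def:perfect crystal}, followed by the uniqueness in axiom (5), forces $b=b_\lambda$. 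One small omission here: you only test elements of the special form $u_\mu\otimes b$, so before declaring $u_\mu\otimes b_\lambda$ the unique highest weight element of $\BB(\mu)\otimes\BB$ you should record the routine observation that any element annihilated by all $\tilde e_i$ necessarily has first factor $u_\mu$.

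The genuine gap is your final step. For affine $\mathfrak g$ with $\BB$ a level-zero crystal, it is false that $\BB(\mu)\otimes\BB$, being ``a tensor product of crystal bases of integrable modules,'' decomposes into highest weight components $\BB(\nu)$: the module $V(\mu)\otimes V$ does not lie in the category of integrable highest weight modules, the image of $Q^{+}$ in $\Pbar$ is not a pointed cone (since $\delta\equiv 0$ there), and connected components of such crystals can contain no highest weight element at all --- indeed axiom (4) shows no element of $\BB$ is killed by all $\tilde e_i$, so the connected crystal $\BB\otimes\BB$ from axiom (2) already refutes the general decomposition principle you invoke. Hence ``unique source implies connected'' does not follow, and even for the single component of $u_\mu\otimes b_\lambda$ a source of weight $\lambda$ does not, by crystal axioms alone, force that component to be $\BB(\lambda)$. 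This is exactly where \cite{KMN1} must work harder: perfectness condition (1) is used to realize $u_\mu\otimes b_\lambda$ as a genuine highest weight vector in the module $V(\mu)\otimes V$, generating a copy of $V(\lambda)$ whose crystal is the component in question, and the connectedness of $\BB(\mu)\otimes\BB$ is established by a separate argument resting on the perfectness axioms. The fallback you sketch --- raising an arbitrary element down (up) to $u_\mu\otimes b_\lambda$ --- is itself delicate, since the crystal graph of $\BB$ contains directed cycles (e.g. through $\emptyset$ and $x_{\pm\theta}$), so termination of a raising procedure is not automatic from finiteness of $\BB$. You correctly flagged this step as the main obstacle, but the proposal does not close it, and both existence of $\Psi$ and connectedness collapse with it; only the uniqueness of $\Psi$, granted existence, survives as written.
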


Let 
$$
\lambda_0 =\lambda, \quad \lambda_{k+1}=\ep(b_{\lambda_k}), \quad b_0=b_{\lambda_0}, \quad b_{k+1}=b_{\lambda_{k+1}}
$$
for all $k\in\mathbb{Z}_{\geq 0}$.
Repeatedly applying the theorem above produces a sequence of crystal isomorphisms
\begin{equation*}
	\begin{array}{ccccccc} \BB(\lambda) &
		\stackrel{\sim}{\longrightarrow} & \BB(\lambda_1) \otimes \BB &
		\stackrel{\sim}{\longrightarrow} & \BB(\lambda_2)\otimes \BB \otimes
		\BB & \stackrel{\sim}{\longrightarrow} & \cdots
		\\
		u_\lambda & \longmapsto & u_{\lambda_1} \otimes b_0 & \longmapsto &
		u_{\lambda_2} \otimes b_1 \otimes b_0 & \longmapsto & \cdots.
\end{array} \end{equation*}
In this process, we obtain an infinite sequence $\mathbf{p}_\lambda =(b_k)^{\infty} _{k=0} \in \BB^{\otimes \infty}$
called the \textit{ground-state path of weight} $\lambda$.
The set
$$
\mathcal{P}(\lambda):=\{\mathbf{p}=(p_{k})^{\infty}_{k=0} \in \BB^{\otimes \infty} \;|\; p_{k} \in \BB,\ p_{k}=b_k \ \text{for all} \ k \gg 0  \}
$$
of $\lambda$-\textit{paths} is endowed with the structure of a classical crystal as follows.
If $p_{k}=b_{k}$ for all $k\geq r$ then let
\begin{align} \label{crystal structure on paths}
\begin{split}
    &\wt(\pb) = \lambda_{r} + \wt(\pb'), \\
    &\et_{i}\pb = \dots\otimes p_{r+1}\otimes\et_{i}(p_{r}\otimes\dots\otimes p_{0}), \\
    &\ft_{i}\pb = \dots\otimes p_{r+1}\otimes\ft_{i}(p_{r}\otimes\dots\otimes p_{0}), \\
    &\varepsilon_{i}(\pb) = \max(\varepsilon_{i}(\pb') - \varphi_{i}(b_{r}),0), \\
    &\varphi_{i}(\pb) = \varphi_{i}(\pb') + \max(\varphi_{i}(b_{r}) - \varepsilon_{i}(\pb'),0),
\end{split}
\end{align}
where $\pb' = p_{r-1}\otimes\dots\otimes p_{0}$.
The following result gives the {\it path realization} of the irreducible highest weight crystal $\BB(\lambda)$.

\begin{Prop}[{\cite{KMN1}}]\label{prop:path realization}
	There exists an isomorphism of classical crystals
	\begin{equation*}
			\Psi_{\lambda} :\BB(\lambda) \stackrel{\sim}{\longrightarrow}
			\mathcal{P}({\lambda})\ \
			\text{given by}\ \ u_\lambda \longmapsto\mathbf{p}_\lambda,
	\end{equation*}
	where $u_{\lambda}$ is the highest weight vector in $B(\lambda)$.
	\label{path_realization}
\end{Prop}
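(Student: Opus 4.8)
The plan is to obtain $\Psi_{\lambda}$ by iterating the single-step crystal isomorphism of the preceding Theorem and passing to a limit. For each $r \geq 0$, composing that isomorphism $r$ times produces a crystal isomorphism
\begin{equation*}
    \Psi_{r} : \BB(\lambda) \stackrel{\sim}{\longrightarrow} \BB(\lambda_{r}) \otimes \BB^{\otimes r}, \qquad u_{\lambda} \longmapsto u_{\lambda_{r}} \otimes b_{r-1} \otimes \dots \otimes b_{0},
\end{equation*}
with the $\lambda_{k}$ and $b_{k}$ as defined above. The crucial bookkeeping observation is that these maps are nested: the $(r+1)$-st application of the Theorem acts only on the leftmost factor via the single-step map $\iota_{r} : \BB(\lambda_{r}) \stackrel{\sim}{\to} \BB(\lambda_{r+1}) \otimes \BB$, sending $u_{\lambda_{r}} \mapsto u_{\lambda_{r+1}} \otimes b_{r}$, so that $\Psi_{r+1} = (\iota_{r} \otimes \mathrm{id}_{\BB^{\otimes r}}) \circ \Psi_{r}$. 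Hence the rightmost $r$ factors of $\Psi_{r+1}(b)$ coincide with the $\BB^{\otimes r}$-component of $\Psi_{r}(b)$ for every $b$, and the entry in each fixed position $k$ stabilises once $r > k$. I would then define $\Psi_{\lambda}(b) = (p_{k})_{k=0}^{\infty}$ by reading off these stable entries.

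The first thing to verify --- and the point requiring the most care --- is that $\Psi_{\lambda}(b)$ genuinely lies in $\mathcal{P}(\lambda)$, i.e. that $p_{k} = b_{k}$ for all but finitely many $k$. Writing $b = \tilde{f}_{i_{1}} \cdots \tilde{f}_{i_{N}} u_{\lambda}$ and using that $\Psi_{r}$ is a crystal isomorphism, I have $\Psi_{r}(b) = \tilde{f}_{i_{1}} \cdots \tilde{f}_{i_{N}}(u_{\lambda_{r}} \otimes b_{r-1} \otimes \dots \otimes b_{0})$. Since each Kashiwara operator alters exactly one tensor factor under the rule \eqref{tensor product of crystals}, the set of positions ever modified across the $N$ applications has size at most $N$, and a position untouched throughout retains its ground-state value $b_{k}$; thus $\{k : p_{k} \neq b_{k}\}$ is finite and $\Psi_{\lambda}(b) \in \mathcal{P}(\lambda)$, with $\Psi_{\lambda}(u_{\lambda}) = \mathbf{p}_{\lambda}$. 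The delicate input is the interaction with the minimal vectors: because $\varphi(b_{k}) = \lambda_{k}$ and $\varepsilon(b_{k}) = \lambda_{k+1}$, the tensor product rule forces $\tilde{e}_{i}$ to act on the leftmost factor (so that $\mathbf{p}_{\lambda}$ is $\tilde{e}_{i}$-highest) while $\tilde{f}_{i}$ propagates toward position $0$, which is precisely what confines all modifications to finitely many positions and legitimises the semi-infinite limit.

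It then remains to check that $\Psi_{\lambda}$ is an isomorphism of classical crystals. The crystal structure on $\mathcal{P}(\lambda)$ in \eqref{crystal structure on paths} is defined so that, for any $r$ with $p_{k} = b_{k}$ for $k \geq r$, the operators together with $\wt, \varepsilon_{i}, \varphi_{i}$ are computed from the finite head $p_{r-1} \otimes \dots \otimes p_{0}$ and the shift $\lambda_{r}$; one verifies (using $\varepsilon(b_{r}) = \lambda_{r+1}$) that this is independent of the chosen $r$, and that it is exactly the structure transported through $\Psi_{r}$ from $\BB(\lambda_{r}) \otimes \BB^{\otimes r}$. Consequently $\Psi_{\lambda}$ intertwines $\tilde{e}_{i}, \tilde{f}_{i}$ and preserves all statistics, so it is a crystal morphism. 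For surjectivity, given $\mathbf{p} \in \mathcal{P}(\lambda)$ choose $r$ with $p_{k} = b_{k}$ for $k \geq r$; then the nesting relation shows that $\Psi_{r}^{-1}(u_{\lambda_{r}} \otimes p_{r-1} \otimes \dots \otimes p_{0})$ has image under $\Psi_{\lambda}$ with head $p_{r-1} \otimes \dots \otimes p_{0}$ and ground-state tail, hence equal to $\mathbf{p}$. Injectivity follows since each $\Psi_{r}$ is a bijection and, by the same propagation analysis (an element whose stabilised tail-path is the ground state must be highest weight, namely $u_{\lambda_{r}}$), the leftmost factor of $\Psi_{r}(b)$ equals $u_{\lambda_{r}}$ once the path tail has stabilised, so that $\Psi_{\lambda}(b)$ determines $\Psi_{r}(b)$ and thus $b$. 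This yields the desired isomorphism sending $u_{\lambda}$ to $\mathbf{p}_{\lambda}$.
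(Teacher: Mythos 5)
Your overall architecture — iterate the one-step isomorphism, use the nesting relation to stabilise entries, pass to the limit — is exactly the intended route; the paper itself gives no proof (it cites \cite{KMN1}), but its discussion preceding the Proposition sets up precisely this iteration. There is, however, a genuine gap at the point where all the weight of the argument sits. Both your verification of the morphism property (the claim that the path structure of \eqref{crystal structure on paths} is ``exactly the structure transported through $\Psi_{r}$'') and your injectivity step silently use the following lemma: \emph{for every $b \in B(\lambda)$ there exists $r_{0}$ such that for all $r \geq r_{0}$ the leftmost factor of $\Psi_{r}(b)$ is the highest weight vector $u_{\lambda_{r}}$ itself}. The justification you offer (``by the same propagation analysis'') is the finiteness argument, namely that $N$ Kashiwara operators modify at most $N$ tensor positions. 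That argument is correct but strictly weaker: it does not rule out that some of those operators act on the leftmost factor $B(\lambda_{r})$, turning it into something other than $u_{\lambda_{r}}$. If that happens, $\Psi_{\lambda}(b)$ no longer determines $\Psi_{r}(b)$ (so injectivity breaks), and the transported operators need not agree with the path rule (indeed, if $\tilde{f}_{i}$ acted on the leftmost factor for every $r$, one would get $\Psi_{\lambda}(\tilde{f}_{i}b) = \Psi_{\lambda}(b)$, contradicting intertwining).

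The reason this needs a real argument is that your observation ``$\tilde{f}_{i}$ propagates toward position $0$'' only applies verbatim to the \emph{first} operator: it uses $\varphi_{i}(u_{\lambda_{r}}) = \lambda_{r}(h_{i}) = \varepsilon_{i}(b_{r-1}) \leq \varepsilon_{i}(b_{r-1}\otimes\dots\otimes b_{0})$, which holds because the tail is in ground state. After one operator is applied the tail is no longer in ground state, and $\varepsilon_{i}$ of the tail can genuinely decrease under $\tilde{f}_{j}$ with $j \neq i$ (already in the $A_{2}$-crystal of the vector representation, applying $\tilde{f}_{2}$ to the middle vertex drops $\varepsilon_{1}$ from $1$ to $0$), so one cannot conclude that later operators also avoid the leftmost factor. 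The fix — and this is the crux of the proof in \cite{KMN1} — is an induction on the number of lowering operators using the nesting relation: assuming $\Psi_{r}(b') = u_{\lambda_{r}}\otimes Y$ for all large $r$, write $\Psi_{r+1}(\tilde{f}_{i}b') = \tilde{f}_{i}\bigl(u_{\lambda_{r+1}}\otimes b_{r}\otimes Y\bigr)$ and use $\varepsilon_{i}(b_{r}) = \lambda_{r+1}(h_{i}) = \varphi_{i}(u_{\lambda_{r+1}})$ — your minimal vector identity, applied to the \emph{fresh} ground-state element $b_{r}$ that the extra unfolding inserts next to the highest weight factor — to conclude that $\tilde{f}_{i}$ acts on $b_{r}\otimes Y$ and the leftmost factor survives as $u_{\lambda_{r+1}}$. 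Once this lemma is in place, your morphism verification, your injectivity argument, and your (already complete) surjectivity argument all go through.
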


\subsection{Energy functions and affinizations}
\begin{defn} \label{def:energy function}
	An \textit{energy function} on an affine or classical crystal $B$ is a map $H:B\otimes B\to\mathbb Z$ satisfying
	\begin{equation*}
		H(\tilde{f}_i(b_1\otimes b_2))=
		\begin{cases}
			H(b_1\otimes b_2), &\text{if } i\neq 0,\\
			H(b_1\otimes b_2)-1, &\text{if } i=0,\ \varphi_0(b_1)>\varepsilon_0(b_2),\\
			H(b_1\otimes b_2)+1, &\text{if } i=0,\ \varphi_0(b_1)\leq\varepsilon_0(b_2),
		\end{cases}
	\end{equation*}	
	for each $i\in I$ and $b_1\otimes b_2\in B\otimes B$ with $\tilde{f}_i(b_1\otimes b_2)\in B\otimes B$.
\end{defn}

Such a function is therefore determined uniquely up to constant shift on each connected component of $B\otimes B$.

The existence of an energy function $H$ for every perfect crystal $B$ was proven in \cite{KMN2}, allowing us to upgrade the path realization of $B(\lambda)$ above to an isomorphism of \emph{affine} crystals.
In particular, we replace the weight function in (\ref{crystal structure on paths}) with
\begin{align} \label{affine weights on paths}
    \wt(\pb) = \lambda_{r} + \wt(\pb') + \delta\,\sum_{k=0}^{\infty} (k+1)(H(p_{k+1}\otimes p_{k})\! -\! H(b_{k+1}\otimes b_{k})).
\end{align}

Any classical crystal has an associated affine crystal defined as follows.

\begin{defn}
The affinization of a classical crystal $B$ is the set $ B^{\mathrm{aff}}:=\{b(n) \mid b \in B, n \in \Z \}$, with an affine crystal structure given by
\begin{equation*}
	\begin{aligned}
		& \tilde{e}_{i}(b(n)) = (\tilde{e}_{i} b)(n-\delta_{i0}), \quad 
		\tilde{f}_{i} (b(n)) = (\tilde{f}_{i} b)(n+\delta_{i0}), \\ 
&\varepsilon_i(b(n))=\varepsilon_i(b),\quad
\varphi_i(b(n))=\varphi_i(b),\quad \wt(b(n))=\wt(b)-n\delta.
	\end{aligned}
\end{equation*} 	
\end{defn}

It is clear that for any morphism (resp. isomorphism) $\Psi : B \rightarrow B'$ of classical crystals, $\Psi^{\mathrm{aff}}(b(n)) := (\Psi(b))(n)$ defines a morphism (resp. isomorphism) $\Psi^{\mathrm{aff}} : \Baff \rightarrow B'^{\mathrm{aff}}$ between their affinizations.

Furthermore, given an energy function $H$ on a classical crystal $B$, we can define a corresponding energy function $\Haff$ on its affinization.

\begin{defn}\label{affine energy function}
The \emph{affine energy function} $H^{\mathrm{aff}}:B^{\mathrm{aff}}\otimes B^{\mathrm{aff}}\to \mathbb Z$ is given by
	\begin{equation*}
		H^{\mathrm{aff}}(a(m)\otimes b(n))=H(a\otimes b)+m-n
	\end{equation*} 
	for each $a,b\in B$ and $m,n\in\mathbb Z$.
\end{defn}

\begin{Thm}[\cite{FHKS23}*{Lemma 3.12}]\label{constant}
	The affine energy function $H^{\mathrm{aff}}$ is constant on each connected component
	of $B^{\mathrm{aff}}\otimes B^{\mathrm{aff}}$.
\end{Thm}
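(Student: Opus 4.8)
The plan is to reduce the statement to checking that $\Haff$ is invariant under the Kashiwara operators. Since the connected components of $\Baff\otimes\Baff$ are exactly the orbits generated by the $\et_i,\ft_i$ $(i\in I)$, it suffices to show $\Haff(\ft_i(a(m)\otimes b(n)))=\Haff(a(m)\otimes b(n))$ for every $i\in I$ and every $a(m)\otimes b(n)\in\Baff\otimes\Baff$ with $\ft_i(a(m)\otimes b(n))\in\Baff\otimes\Baff$.

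The observation that makes this work is that affinization leaves $\varepsilon_i$ and $\varphi_i$ untouched, so $\varphi_i(a(m))=\varphi_i(a)$ and $\varepsilon_i(b(n))=\varepsilon_i(b)$. Hence the branch of the tensor product rule selected in $\Baff\otimes\Baff$ is governed by the same inequality ($\varphi_i(a)>\varepsilon_i(b)$ versus $\varphi_i(a)\le\varepsilon_i(b)$) that governs $\ft_i(a\otimes b)$ inside the classical crystal $B\otimes B$. In particular $\ft_i$ acts on the corresponding tensor factor in both settings, which lets me transfer the defining relation for $H$ directly.

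I would then compute in the two cases. When $\varphi_i(a)>\varepsilon_i(b)$ the operator acts on the left, giving $\ft_i(a(m)\otimes b(n))=(\ft_i a)(m+\delta_{i0})\otimes b(n)$ and hence
\[
\Haff(\ft_i(a(m)\otimes b(n)))=H(\ft_i(a\otimes b))+m+\delta_{i0}-n;
\]
when $\varphi_i(a)\le\varepsilon_i(b)$ it acts on the right, giving
\[
\Haff(\ft_i(a(m)\otimes b(n)))=H(\ft_i(a\otimes b))+m-n-\delta_{i0}.
\]
Feeding in the defining property of $H$ from Definition~\ref{def:energy function} then closes each case: for $i\ne0$ one has $\delta_{i0}=0$ and $H(\ft_i(a\otimes b))=H(a\otimes b)$, while for $i=0$ the right-hand sides become $H(a\otimes b)-1+m+1-n$ and $H(a\otimes b)+1+m-n-1$, both equal to $H(a\otimes b)+m-n=\Haff(a(m)\otimes b(n))$.

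The computation itself is routine bookkeeping, so the only subtle point — and really the crux of the statement — is the $i=0$ case, where the $\pm1$ shift produced by the affinization of $\ft_0$ exactly cancels the compensating $\mp1$ shift built into the energy function relation. It is precisely this cancellation that is engineered into the definition of $\Haff$, and it works uniformly regardless of the underlying perfect crystal $B$.
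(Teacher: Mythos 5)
Your proof is correct: the case analysis on which tensor factor $\ft_i$ acts, together with the observation that affinization preserves $\varepsilon_i$ and $\varphi_i$ (so the same branch of the tensor product rule is selected in $\Baff\otimes\Baff$ as in $B\otimes B$), and the cancellation of the $\pm 1$ shifts when $i=0$, is exactly the standard argument. The paper itself cites this result from \cite{FHKS23}*{Lemma 3.12} without reproducing a proof, and the proof there is essentially the same direct verification you give.
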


\begin{Rmk} \label{energy function difference remark}
    It is important to note that while our definitions of energy functions and affine energy functions match those of references such as \cites{BFKL06,FHKS23,KMN1,KMN2,KMPY96}, they are equal to \emph{minus} those of \cites{Kas02,Lau23}.
\end{Rmk}

\subsection{Fock space crystals} \label{Fock space preliminaries}
Here we shall briefly outline the construction due to Kashiwara-Miwa-Petersen-Yung \cite{KMPY96} of the Fock space representations $\Fcal(\lambda)$ for  quantum affine algebra $U_q(\mathfrak{g})$, together with a description of their crystal bases.

We start with a finite dimensional $U'_q(\mathfrak{g})$-module $V$ satisfying certain assumptions.
In particular, it was shown by Kashiwara \cite{Kas02} that we can take $V$ to be a \textit{good module}, which means that it has a simple crystal basis $B$, a bar involution, and a global basis -- see \cites{Kas02,Lau23} for more details.
Let us further assume that $B$ is a perfect crystal of level $l$.

Consider the affinization $\Vaff$ as a representation of $U_q(\mathfrak{g})$, and define a submodule
\begin{align*}
    N = U_q(\mathfrak{g})[z^{\pm 1}\otimes z^{\pm 1},z\otimes 1 + 1\otimes z](u\otimes u)
\end{align*}
of the tensor square $\Vaff \otimes \Vaff$, which is independent of a choice of extremal vector $u\in\Vaff$.

The $q$-exterior power $\wpr$ is then the quotient of $(\Vaff)^{\otimes r}$ by
\begin{align*}
    N_{r} = \sum_{k=0}^{r-2} (\Vaff)^{\otimes k}\otimes N \otimes (\Vaff)^{\otimes(r-k-2)},
\end{align*}
which can be thought of as a deformation of the ordinary exterior power since $N = \ker(R-1)$, where $R$ is the action of $R$-matrix on $\Vaff\otimes\Vaff$.

Fix some weight $\lambda\in\Pbar^{+}$ of level $l$.
With $\mathbf{p}_\lambda = (b_k)^{\infty} _{k=0} \in \BB^{\otimes \infty}$ as in Section \ref{section:highest weight crystals}, we let $m_{k}\in\mathbb{Z}$ be such that
\begin{align*}
    \Haff(b_{k+1}(m_{k+1})\otimes b_{k}(m_{k})) = 1
\end{align*}
for all $k\geq 0$.
Then $(b_k(m_{k}))^{\infty}_{k=0} \in (\Baff)^{\otimes \infty}$ is called the \textit{ground-state sequence} for the Fock space, and the corresponding element
$\cdots\wedge v^{\circ}_{2}\wedge v^{\circ}_{1}\wedge v^{\circ}_{0}$
in $(\Vaff)^{\otimes \infty}$ is the \textit{vacuum vector}.

The \textit{Fock space} $\Fcal(\lambda)$ is defined to be the semi-infinite limit $\lim_{r\rightarrow\infty} \wpr$ along this vacuum vector, and can be naturally endowed with a $U_q(\mathfrak{g})$-module structure.
(Technically, we quotient by a small subspace to ensure that certain sums converge and the action is well-defined.)

Any element of $\Fcal(\lambda)$ can then be written as a linear combination of infinite wedges $\cdots\wedge v_{2}\wedge v_{1}\wedge v_{0}$ with $v_{k} = v^{\circ}_{k}$ for $k\gg 0$.

We say that a sequence $(p_{k}(n_{k}))^{\infty}_{k=0}$ in $\Baff$ is \textit{normally ordered} if
\begin{align} \label{normally ordered condition}
    \Haff(p_{k+1}(n_{k+1})\otimes p_{k}(n_{k})) < 2
\end{align}
for all $k\geq 0$.
Note the different condition compared to \cite{Kas02} in order to account for Remark \ref{energy function difference remark}.

\begin{Thm}[\cite{KMPY96}]
    The set of normally ordered sequences $(p_{k}(n_{k}))^{\infty}_{k=0}$ in $\Baff$ with $p_{k}(n_{k}) = b_{k}(m_{k})$ for $k\gg 0$, endowed with the structure of an affine crystal via (\ref{crystal structure on paths}) and (\ref{affine weights on paths}), forms a crystal basis $B(\Fcal(\lambda))$ for the Fock space $\Fcal(\lambda)$.
\end{Thm}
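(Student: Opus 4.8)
The plan is to first establish the statement at the level of the finite $q$-exterior powers $\wpr$, where the combinatorics is controlled by the relation $N$, and then to pass to the semi-infinite limit along the vacuum vector. Throughout, the essential inputs are that $V$ is a good module, so that $\Vaff$ carries a crystal lattice $L(\Vaff)$ together with its crystal basis $\Baff$, and the description $N = \ker(R-1)$ of the wedge relation via the $R$-matrix. Since the $R$-matrix is compatible with crystal bases -- its $q=0$ behaviour being governed by the affine energy function $\Haff$ -- the expectation is that the entire wedge combinatorics can be read off from $\Haff$.

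First I would show that $\wpr$ inherits a crystal basis from the tensor power $(\Vaff)^{\otimes r}$. Concretely, let $L_{r}$ denote the image of $L(\Vaff)^{\otimes r}$ in $\wpr = (\Vaff)^{\otimes r}/N_{r}$. The main point is to verify that $N_{r}$ is compatible with the crystal lattice, namely that $L(\Vaff)^{\otimes r}\cap N_{r} = L(N_{r})$ and that $L_{r}/qL_{r}$ inherits a basis from the tensor product crystal $(\Baff)^{\otimes r}$. This compatibility comes from analysing $N = \ker(R-1)$ at $q=0$: the key computation is that, for $a(m),b(n)\in\Baff$, the class of $a(m)\otimes b(n)$ in $\Vaff\wedge\Vaff$ represents a crystal basis element precisely when the pair is normally ordered, that is $\Haff(a(m)\otimes b(n)) < 2$, while a non-normally-ordered tensor is rewritten via the wedge relation in terms of normally ordered ones (or vanishes). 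This is exactly where the shifted normal-ordering condition (\ref{normally ordered condition}) enters, with Remark \ref{energy function difference remark} accounting for the sign convention.

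Next I would establish a \emph{straightening} procedure. Using the relations coming from $N_{r}$, any wedge of crystal basis vectors can be rewritten, modulo $qL_{r}$, as $\pm$ a unique normally ordered wedge (or as $0$). Combined with the previous step, this yields a bijection between the crystal basis $B(\wpr)$ and the set of normally ordered length-$r$ sequences in $\Baff$, under which the Kashiwara operators act by the tensor product rule (\ref{tensor product of crystals}) followed by straightening. Here Theorem \ref{constant}, the constancy of $\Haff$ on connected components of $\Baff\otimes\Baff$, is crucial: it guarantees that applying $\et_{i}$ or $\ft_{i}$ to a normally ordered wedge again produces the straightening of a normally ordered wedge, so that the crystal structure descends consistently to normally ordered sequences.

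Finally I would pass to the semi-infinite limit. The ground-state sequence $(b_{k}(m_{k}))$ is normally ordered, since $\Haff(b_{k+1}(m_{k+1})\otimes b_{k}(m_{k})) = 1 < 2$, so the vacuum vector is a genuine normally ordered wedge. The stabilization maps given by wedging with the next ground-state vector $v^{\circ}_{r}$ are compatible with the crystal bases (again by constancy of $\Haff$), so the finite crystal bases $B(\wpr)$ stabilize as $r\to\infty$ along the vacuum. The stable limit is precisely the set of semi-infinite normally ordered sequences agreeing with the ground-state sequence for $k\gg 0$, and the induced crystal and weight structures are exactly (\ref{crystal structure on paths}) and (\ref{affine weights on paths}). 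I expect the main obstacle to be the $q=0$ analysis of $N = \ker(R-1)$ in the second paragraph: establishing the precise basis/straightening dichotomy governed by $\Haff$ and verifying its compatibility with the crystal lattice is the technical heart of the argument, and is the content of \cite{KMPY96}.
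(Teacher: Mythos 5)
The paper contains no proof of this theorem: it is imported verbatim from \cite{KMPY96} (as the bracketed citation in the theorem header indicates), so there is no in-paper argument to compare yours against. That said, your outline is a faithful reconstruction of the strategy of that reference: normally ordered wedges, governed by the condition $\Haff < 2$, as a basis of $\wpr$; a straightening procedure identifying the crystal of $\wpr$ with normally ordered length-$r$ sequences; closure of the Kashiwara operators on such sequences via the constancy of $\Haff$ on connected components of $\Baff\otimes\Baff$; and stabilization along the vacuum to reach the semi-infinite limit. Two caveats. First, as written this is an outline rather than a proof: the decisive steps --- the $q=0$ analysis of $N=\ker(R-1)$, the compatibility of $N_{r}$ with the crystal lattice $L(\Vaff)^{\otimes r}$, and the sign control in the straightening lemma --- are asserted, and you explicitly defer them to \cite{KMPY96}, which is circular if the goal is to prove the statement itself (though entirely consistent with how the paper treats it, namely as a black box). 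Second, your final step is glibber than the situation warrants: wedging a normally ordered wedge with the next ground-state vector $v^{\circ}_{r}$ does \emph{not} automatically produce a normally ordered wedge (the pair $b_{r}(m_{r})\otimes p_{r-1}(n_{r-1})$ can violate the bound when $n_{r-1}$ is large), so the transition maps require another straightening, and the semi-infinite limit needs the quotient/completion the paper alludes to before its crystal makes sense; this is part of the technical content of \cite{KMPY96} that your sketch absorbs into the phrase ``compatible with the crystal bases.''
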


\begin{Prop} \label{component of ground-state sequence proposition}
    The connected component of the ground-state sequence in $B(\Fcal(\lambda))$ is a copy of $B(\lambda)$ consisting of the sequences $(p_{k}(n_{k}))^{\infty}_{k=0}$ with all $\Haff(p_{k+1}(n_{k+1})\otimes p_{k}(n_{k})) = 1$.
\end{Prop}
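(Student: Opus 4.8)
The plan is to realise the claimed component as the image of the path crystal $\mathcal{P}(\lambda)$ under a forgetting map, and to exploit Theorem \ref{constant} to show this image is closed under the Kashiwara operators. Write $S \subseteq B(\Fcal(\lambda))$ for the set of sequences $(p_k(n_k))_{k=0}^{\infty}$ (stabilising to the ground-state sequence) satisfying $\Haff(p_{k+1}(n_{k+1}) \otimes p_k(n_k)) = 1$ for every $k$, and let $\pi : S \to \mathcal{P}(\lambda)$ be the map $(p_k(n_k))_{k=0}^{\infty} \mapsto (p_k)_{k=0}^{\infty}$ that forgets the affinisation indices. I will show that $\pi$ is a crystal isomorphism and that $S$ is a single connected component; since $\mathcal{P}(\lambda) \cong B(\lambda)$ by Proposition \ref{prop:path realization}, this yields the proposition.

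First I would verify that $\pi$ is a bijection. Given a $\lambda$-path $(p_k)_{k=0}^{\infty}$ with $p_k = b_k$ for $k \geq r$, the formula $\Haff(a(m)\otimes b(n)) = H(a\otimes b) + m - n$ turns the requirement $\Haff(p_{k+1}(n_{k+1})\otimes p_k(n_k)) = 1$ into the recursion $n_k = n_{k+1} + H(p_{k+1}\otimes p_k) - 1$. Together with the stabilisation condition $n_k = m_k$ for $k \gg 0$ this determines $(n_k)_{k=0}^{\infty}$ uniquely, and on the tail the recursion is consistent with the defining relation for the $m_k$. Hence $\pi$ has a two-sided inverse and is bijective.

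The crux is to show that $S$ is stable under each $\tilde{e}_i$ and $\tilde{f}_i$. Because $\varepsilon_i(b(n)) = \varepsilon_i(b)$ and $\varphi_i(b(n)) = \varphi_i(b)$ do not depend on $n$, the signature rule selects the same tensor factor $j$ whether $\tilde{f}_i$ is applied in $(\Baff)^{\otimes \infty}$ or to the underlying path; consequently $\pi$ intertwines the operators, once we know the image stays in $S$. Suppose then that $\tilde{f}_i$ acts on factor $j$, replacing $p_j(n_j)$ by $(\tilde{f}_i p_j)(n_j + \delta_{i0})$, so that only the two neighbouring energies can change. Writing the tensor as $L \otimes p_j(n_j) \otimes R$ and applying associativity to the groupings $(L \otimes p_j(n_j))\otimes R$ and $L \otimes (p_j(n_j)\otimes R)$, the fact that the operator acts on factor $j$ forces $\varphi_i(L) \leq \varepsilon_i(p_j)$ and $\varphi_i(p_j) > \varepsilon_i(R)$. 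Since $\varphi_i(p_{j+1}) \leq \varphi_i(L)$ and $\varepsilon_i(R) \geq \varepsilon_i(p_{j-1})$ by the tensor product rule, $\tilde{f}_i$ acts on the right factor of the isolated pair $p_{j+1}(n_{j+1})\otimes p_j(n_j)$ and on the left factor of $p_j(n_j)\otimes p_{j-1}(n_{j-1})$; in both cases the modification is a single $\tilde{f}_i$-move inside $\Baff \otimes \Baff$. By Theorem \ref{constant} such a move preserves $\Haff$, so both neighbouring energies stay equal to $1$ and $\tilde{f}_i x \in S$. The identical argument with $\tilde{e}_i$ completes the proof of stability. This localisation --- relating the global action on the semi-infinite tensor to local moves on adjacent pairs --- is the step I expect to require the most care.

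Finally I would assemble the pieces. Stability shows that $S$ is a union of connected components, while the bijectivity of $\pi$ and the operator-intertwining show that $\pi$ is an isomorphism onto the connected crystal $\mathcal{P}(\lambda)$; hence $S$ is connected and is therefore a single component. As $\pi$ maps the ground-state sequence to the ground-state path $\mathbf{p}_\lambda$, the component containing the ground-state sequence is exactly $S$. The agreement of $\wt$, $\varepsilon_i$ and $\varphi_i$ under $\pi$ then follows formally, since $\pi$ intertwines the Kashiwara operators, identifies the two distinguished elements of weight $\lambda$, and the values of $\varepsilon_i$ and $\varphi_i$ are recovered from the $i$-string lengths that $\pi$ preserves.
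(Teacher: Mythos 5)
Your argument is correct, but there is nothing in the paper to compare it against: Proposition \ref{component of ground-state sequence proposition} is stated without proof, as recalled background on Fock space crystals in the sense of \cite{KMPY96} and Kashiwara. That said, what you have written is essentially the technique the paper itself deploys for the analogous Young wall statements. Your stability step --- localizing the action of $\tilde{f}_{i}$ on the semi-infinite tensor product to the two adjacent pairs containing the acted-on factor, checking via associativity of the tensor product rule that $\varphi_{i}(L)\leq\varepsilon_{i}(p_{j})$ and $\varphi_{i}(p_{j})>\varepsilon_{i}(R)$ force $\tilde{f}_{i}$ to act on the $p_{j}$-factor of each pair, and then invoking Theorem \ref{constant} to conclude both affine energies are preserved --- is precisely the proof of Proposition \ref{prop:close} (whose statement, incidentally, writes the first inequality strictly; your non-strict version is the correct one for (\ref{tensor product of crystals})). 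Likewise, your forgetting bijection $\pi$ onto $\mathcal{P}(\lambda)$, with the affinization indices recovered by the recursion $n_{k}=n_{k+1}+H(p_{k+1}\otimes p_{k})-1$ anchored at the ground-state tail, mirrors the map $\Phi$ and the injectivity/surjectivity argument in the proof of Theorem \ref{thm:main}. Two points deserve one more sentence each if you write this up: in the localization you should fix the truncation point $r$ of (\ref{crystal structure on paths}) strictly beyond the factor $j$ on which the operator acts (legitimate, since the definition is independent of the choice of $r$), so that both pairs $(j+1,j)$ and $(j,j-1)$ sit inside a genuinely finite tensor product where associativity applies; and the weight comparison should note that the ground-state sequence has weight $\lambda$ under (\ref{affine weights on paths}), after which your formal argument (connectedness, intertwining, and weights shifting by $\alpha_{i}$ along arrows in any crystal) does close the gap.
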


\section{Level \texorpdfstring{$1$}{1} perfect crystals of \texorpdfstring{$U_q(E_6^{(2)})$}{Uq(E6(2))} and \texorpdfstring{$U_q(F_4^{(1)})$}{Uq(F4(1))}} \label{Sec: Perfect crystal of E and F}
Let us recall the affine Cartan data of types $E_6^{(2)}$ and $F_4^{(1)}$.
Take $I=\{0,1,2,3,4\}$ to be the index set, and denote the sets of simple coroots, simple roots and fundamental weights respectively by
$$
\{h_0, h_1, h_2, h_3, h_4\},\quad \{\alpha_0, \alpha_1, \alpha_2, \alpha_3, \alpha_4\},\quad \{\Lambda_0, \Lambda_1, \Lambda_2, \Lambda_3, \Lambda_4\}.
$$

The Cartan matrices are equal to
\begin{equation*}
E_6^{(2)}:\ \left(
\begin{array}{rrrrr}
	2  & -1 & 0 & 0 & 0  \\
	-1 & 2  & -1 & 0 & 0 \\
	0  & -1 & 2 & -2 & 0 \\
	0  & 0 & -1 & 2 & -1 \\
	0  & 0 & 0 & -1 & 2 		
\end{array}
\right) 
\end{equation*}
\begin{equation*}
	F_4^{(1)}:\ \left(
	\begin{array}{rrrrr}
		2  & -1 & 0 & 0 & 0  \\
		-1 & 2  & -1 & 0 & 0 \\
		0  & -1 & 2 & -1 & 0 \\
		0  & 0 & -2 & 2 & -1 \\
		0  & 0 & 0 & -1 & 2 		
	\end{array}
	\right) 
\end{equation*}
and their associated Dynkin diagrams are illustrated as
\[
E_6^{(2)}:\ 	
\xymatrix@R=.0ex{
	\circ\ar@{-}[r]&\circ\ar@{-}[r]&\circ&\circ\ar@2{->}[l]\ar@{-}[r]&\circ	\\
	0&1&2&3&4
}
\]

\[
F_4^{(1)}:\ 	
\xymatrix@R=.0ex{
\circ\ar@{-}[r]&\circ\ar@{-}[r]&\circ&\circ\ar@2{<-}[l]\ar@{-}[r]&\circ	\\
0&1&2&3&4
}
\]

The null root $\delta$ and canonical central element $c$ are 
given by
\begin{align*}
&E_6^{(2)}:\ \delta=\alpha_0+2\alpha_1+3\alpha_2+2\alpha_3+\alpha_4, \quad c=h_0+2h_1+3h_2+4h_3+2h_4,\\
&F_4^{(1)}:\ 
\delta=\alpha_0+2\alpha_1+3\alpha_2+4\alpha_3+2\alpha_4, \quad c=h_0+2h_1+3h_2+2h_3+h_4.
\end{align*}

Our level $1$ perfect crystals of $U_q(E_6^{(2)})$ and $U_q(F_4^{(1)})$ shall come from a uniform construction due to Benkart--Frenkel--Kang--Lee \cite{BFKL06}.

In particular, let $\Phi^+$ and $\Phi^-=-\Phi^+$ be the sets of positive and negative roots in types $F_{4}^{t}$ and $F_{4}$ respectively, which are obtained from the corresponding affine Cartan data by removing the $0$ vertex from the affine Dynkin diagram.

Let $\theta=\delta-\alpha_0 \in \Phi^{+}$ be the highest short root for $F_{4}^{t}$ and highest root for $F_{4}$, and define $B(0)=\{\emptyset\}$ and $B(\theta)=\{x_{\pm\alpha}\mid \alpha\in \Phi^+\}\cup\{r_i\mid i=1,2,3,4\}$.
The set $B(\theta)\sqcup B(0)$ can be turned into a crystal graph by adding the following arrows, whereby it gains the structure of a classical crystal.
\begin{equation}\label{eq:crys} 
\begin{array}{cccc}
		\vspace{3mm}
		&(i \neq 0)   &\qquad x_\alpha\ \stackrel{i}{\longrightarrow} \ x_\beta \ \Longleftrightarrow \ \alpha-\alpha_i = \beta & \qquad (\alpha,\beta \in
		\Phi^+\cup \Phi^-) \\
		\vspace{3mm}
		& &\qquad  x_{\alpha_i} \ \stackrel{i}{\longrightarrow} \ r_i \ \stackrel{i}{\longrightarrow} \ x_{-\alpha_i} & \qquad (\alpha_i \in \Phi^+) \\
			\vspace{3mm}
		&(i=0)  &\qquad x_\alpha \ \stackrel{0}{\longrightarrow} \ x_{\beta} \ \Longleftrightarrow \ \alpha+\theta = \beta &
		\qquad (\alpha,\beta\in\Phi^{+}\sqcup\Phi^{-}, \alpha,\beta \neq \pm \theta) \\
			\vspace{3mm}
		&  &\qquad x_{-\theta} \ \stackrel{0}{\longrightarrow} \ \emptyset \ \stackrel{0}{\longrightarrow}\ x_{\theta} & \end{array}
\end{equation}

\begin{Thm}[\cite{BFKL06}*{Theorem 3.1}]\label{thm:level 1 perfect crystal} 
    Equipped with the crystal structure in \eqref{eq:crys}, the set $B(\theta) \sqcup  B(0)$ becomes a level $1$ perfect crystal of the quantum affine algebra $U_q(E_6^{(2)})$ or $U_q(F_4^{(1)})$. 
\end{Thm}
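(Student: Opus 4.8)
The plan is to verify the five conditions defining a level-$1$ perfect crystal in Definition \ref{def:perfect crystal} for the explicit classical crystal $\BB = B(\theta)\sqcup B(0)$ of \eqref{eq:crys}. As a preliminary step I would record the full crystal datum: every element carries a level-$0$ classical weight, with $\wt(x_{\pm\alpha}) = \pm\,\mathrm{cl}(\alpha)$ and $\wt(r_i) = \wt(\emptyset) = 0$, while each $\varepsilon_i(b)$ and $\varphi_i(b)$ is read off from \eqref{eq:crys} as the length of the $i$-string through $b$ above and below $b$. Checking the axioms of Definition \ref{def:crystal} is then routine, so that $\BB$ is a genuine (seminormal) classical crystal; the resulting tables of $\varepsilon_i,\varphi_i$ — computable in \cite{SageMath} — feed into the remaining checks.

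Conditions (3), (4) and (5) reduce to finite root-theoretic verifications. For (3) I would take $\lambda_0 = \mathrm{cl}(\theta)$: since $\theta$ is the unique maximal root of the finite subalgebra (the highest short root of $F_4^t$, equivalently the highest root of $F_4$), every weight $\mathrm{cl}(\alpha)$ and $0$ lies in $\mathrm{cl}(\theta) - \sum_{i\neq 0}\Z_{\geq 0}\alpha_i$, and the $\lambda_0$-weight space is the singleton $\{\xt\}$ because $B(\theta)$ restricts to the irreducible finite adjoint crystal of highest weight $\theta$. For (4) I would compute $\varepsilon(b)(c) = \sum_i a_i^{\vee}\varepsilon_i(b)$ from the comarks $(a_0^\vee,\dots,a_4^\vee) = (1,2,3,4,2)$ for $E_6^{(2)}$ and $(1,2,3,2,1)$ for $F_4^{(1)}$, and check it is $\geq 1$ for all $b$ — equivalently, that $\BB$ has no global source. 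For (5) the level-$1$ dominant weights are exactly $\Lambda_0$ in type $E_6^{(2)}$ and $\Lambda_0,\Lambda_4$ in type $F_4^{(1)}$, as these are the only $\sum_i m_i\Lambda_i$ with $\sum_i m_i a_i^\vee = 1$; for each such $\lambda$ I would read off the minimal vectors $b^\lambda,b_\lambda$ from the $\varepsilon_i,\varphi_i$ tables and confirm uniqueness. For instance one finds $\varepsilon(\emptyset) = \varphi(\emptyset) = \Lambda_0$, with uniqueness here following since the only finite-highest (resp.\ finite-lowest) elements of $\BB$ are $\xt,\emptyset$ (resp.\ $\xmt,\emptyset$), and $\varepsilon_0(\xt) = \varphi_0(\xmt) = 2 \neq 1$.

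Condition (2), the connectedness of $\BB\otimes\BB$, is the most laborious of the combinatorial steps. I would establish it by computing the crystal graph of $\BB\otimes\BB$ directly — feasible in \cite{SageMath}, as already used for the energy functions — and checking it has a single connected component. Alternatively one argues by hand from the tensor product rule \eqref{tensor product of crystals}: using the connectedness of $\BB$ itself together with the fact that $\BB$ realises every weight of the adjoint orbit, one transports an arbitrary $b\otimes b'$ to a fixed reference tensor by first moving the right-hand factor to a distinguished element and then acting on the left.

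The genuine obstacle, and the only step with real representation-theoretic content, is condition (1): exhibiting an irreducible finite-dimensional $\Udash$-module whose crystal basis is isomorphic to $\BB$. This is precisely what the uniform construction of Benkart--Frenkel--Kang--Lee supplies, realising $\BB$ as the crystal basis of the relevant level-$0$ fundamental (adjoint-type) module, and I would invoke \cite{BFKL06} for this input. Granting (1) and combining it with the combinatorial verifications of (2)--(5) above completes the proof that $B(\theta)\sqcup B(0)$ is a level-$1$ perfect crystal in each of types $E_6^{(2)}$ and $F_4^{(1)}$.
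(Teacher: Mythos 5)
The first thing to say is that the paper does not prove this theorem at all: as the bracketed attribution in the statement indicates, it is imported wholesale from \cite{BFKL06}*{Theorem 3.1}, and no argument for it appears anywhere in the text. So there is no internal proof to compare yours against; the honest comparison is between your sketch and a citation. Measured that way, your outline is a reasonable reconstruction of what a proof must contain, and it is non-circular at the one point where circularity threatens: in \cite{BFKL06} the construction of the level-$0$ module and the identification of its crystal basis with $B(\theta)\sqcup B(0)$ are separate from the perfectness theorem, so citing the former while verifying conditions (2)--(5) of Definition \ref{def:perfect crystal} combinatorially is a legitimate division of labour (one small imprecision: in type $E_6^{(2)}$ the module is the $26$-dimensional ``little adjoint'' module of $F_4^{t}$, whose crystal consists of the \emph{short} roots $\pm\alpha$ together with $r_1,r_2$ only, rather than the adjoint module -- but irreducibility is all your argument for condition (3) needs). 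Your treatment of condition (5) is also the correct one, and in fact more careful than the paper's own subsequent discussion: with the paper's formula $c=h_0+2h_1+3h_2+2h_3+h_4$ in type $F_4^{(1)}$ one has $\Lambda_4(c)=1$, so the level-$1$ weights in $\Pbar^{+}$ are $\Lambda_0$ \emph{and} $\Lambda_4$ (with minimal vectors $\emptyset$ and $r_4$ respectively), exactly as you list; the paper later asserts that $\Lambda_0$ is the only level-$1$ weight in both types, which is inconsistent with its own formula for $c$, so do not let that remark shortcut the $\Lambda_4$ check.

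The one genuine weak point is your fallback ``by hand'' argument for condition (2). Connectedness of $B$ does not imply connectedness of $B\otimes B$, and the transport you describe -- moving the right-hand tensor factor to a distinguished element and then acting on the left -- is not available: by the tensor product rule \eqref{tensor product of crystals} each Kashiwara operator decides for itself which factor it acts on, according to the comparison of $\varphi_i$ of the left factor with $\varepsilon_i$ of the right factor, so you cannot steer one factor while holding the other fixed. Any correct hand argument (of the kind carried out in \cite{KMN2} for specific crystals) has to track exactly this interaction, and your sketch elides it entirely. Your primary route -- computing the crystal graph of $B\otimes B$ in \cite{SageMath} and checking it has one component -- is what actually carries this step, and since $|B\otimes B|$ is $729$ or $2809$ that finite check is unobjectionable; just be aware that the computation is doing real mathematical work there rather than serving as a convenience.
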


We can therefore use $B(\theta) \sqcup  B(0)$ for the path realization of the level $1$ irreducible highest weight crystals $B(\lambda)$.
Furthermore, the following result from \cite{Lau23} shows that it can also be used to construct the level $1$ Fock space crystals $B(\Fcal(\lambda))$ as in Section \ref{Fock space preliminaries}.

\begin{Prop}[\cite{Lau23}*{Proposition 3.4}] \label{BFKL crystals are good}
    The level $1$ perfect crystal of Benkart-Frenkel-Kang-Lee is the crystal basis of a good $U'_q(\mathfrak{g})$-module in all affine types.
\end{Prop}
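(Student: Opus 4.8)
The plan is to identify the underlying $U_q'(\mathfrak{g})$-module of the Benkart--Frenkel--Kang--Lee crystal with a fundamental level-$0$ representation, and then to invoke Kashiwara's theorem \cite{Kas02} that such representations are good modules. By Theorem \ref{thm:level 1 perfect crystal} together with condition (1) of Definition \ref{def:perfect crystal}, the crystal $B(\theta)\sqcup B(0)$ is \emph{already} the crystal basis of some irreducible finite dimensional $U_q'(\mathfrak{g})$-module $V$; so what remains is to exhibit the extra data making $V$ good, namely that its crystal basis is \emph{simple} and that it carries a bar involution and a global (lower) basis. Rather than construct these by hand, I would import them by pinning down $V$ precisely.

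First I would read off the classical structure, i.e. the $U_q(\mathfrak{g}_0)$-crystal obtained from \eqref{eq:crys} by deleting the $0$-arrows, where $\mathfrak{g}_0$ is the finite subalgebra gotten by removing the $0$ node. The vertices $x_{\pm\alpha}$ $(\alpha\in\Phi^+)$ carry precisely the roots of $\mathfrak{g}_0$, each with multiplicity one; the $r_i$ span the rank-many zero weight vectors of the adjoint; and the single extra vertex $\emptyset$ contributes one further zero weight vector. Hence as a $U_q(\mathfrak{g}_0)$-module $V\cong V(\vartheta)\oplus V(0)$, the adjoint representation together with a trivial summand, where $\vartheta$ is the highest root of $\mathfrak{g}_0$. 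The classical highest weight vector is $x_{\vartheta}$ (killed by all $\tilde e_i$, $i\neq0$), and the extremal weights of $V$ form the single Weyl-group orbit $W_0\vartheta$, each with multiplicity one. Together with connectedness of $B(\theta)\sqcup B(0)$ -- which follows from connectedness of $B\otimes B$ in condition (2) of Definition \ref{def:perfect crystal}, since the first tensor factor's connected component is a crystal invariant -- this exhibits the crystal as simple in the sense of \cite{Kas02}.

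Next, in each affine type the highest root $\vartheta$ of $\mathfrak{g}_0$ is a fundamental weight $\varpi_{i_0}$ attached to the adjoint node $i_0$ (for instance the $F_4^t$- and $F_4$-highest roots in types $E_6^{(2)}$ and $F_4^{(1)}$ respectively). The classical decomposition found above is exactly that of the fundamental level-$0$ representation $W(\varpi_{i_0})$, the level-$1$ Kirillov--Reshetikhin module for the adjoint node, whose dimension is $\dim\mathfrak{g}_0+1$. I would therefore identify $V$ with $W(\varpi_{i_0})$ and invoke \cite{Kas02}, by which every fundamental representation $W(\varpi_i)$ -- in all affine types, twisted or untwisted -- is a good module, so comes equipped with a bar involution, a global basis, and a simple crystal basis. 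Transporting this data along the crystal isomorphism then shows that $B(\theta)\sqcup B(0)$ is the crystal basis of a good module, as required.

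The main obstacle is the identification step. Matching classical characters determines $V$ only up to the information visible to the crystal, so to conclude $V\cong W(\varpi_{i_0})$ as a $U_q'(\mathfrak{g})$-module -- and thereby genuinely inherit the bar involution and global basis -- one must rule out other level-$0$ modules sharing this classical decomposition. This is cleanest to carry out uniformly via the extremal-weight/universal characterization of $W(\varpi_{i_0})$, and needs a little extra care in the twisted type $E_6^{(2)}$, where one works with the dual root system $F_4^t$. An alternative that sidesteps the identification entirely is to construct the bar involution and global basis directly from the Benkart--Frenkel--Kang--Lee presentation and verify the simple-crystal axioms explicitly; I expect the identification route to be the shorter of the two.
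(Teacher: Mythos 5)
First, a structural point: the paper contains no proof of this proposition --- it is imported verbatim from \cite{Lau23}*{Proposition 3.4} --- so the comparison is with the cited argument, which, like yours, proceeds by matching the Benkart--Frenkel--Kang--Lee crystal with the crystal of a known level-zero (Kirillov--Reshetikhin/fundamental-type) module and appealing to Kashiwara's goodness results from \cite{Kas02}. Your overall strategy is therefore the right one, and for the two types actually used in this paper it works: in type $F_4^{(1)}$ one gets the $53$-dimensional $W(\varpi_1)$ (adjoint $\oplus$ trivial), and in type $E_6^{(2)}$ the $27$-dimensional $W(\varpi_1)$. Moreover, the identification step you flag as the ``main obstacle'' can be closed cleanly when the classical highest weight is fundamental: irreducibility plus classical highest weight $\varpi_{i_0}$ forces the Drinfeld polynomial/highest $\ell$-weight to be fundamental at node $i_0$, so $V \cong W(\varpi_{i_0})_a$ up to a spectral parameter, and goodness survives such shifts.

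The genuine gap is that the proposition asserts goodness \emph{in all affine types}, and two of your uniform claims fail outside the exceptional cases. First, ``in each affine type the highest root $\vartheta$ of $\mathfrak{g}_0$ is a fundamental weight attached to the adjoint node'' is false in type $A_n^{(1)}$, where $\mathrm{cl}(\theta) = \varpi_1 + \varpi_n$, and in type $C_n^{(1)}$, where $\mathrm{cl}(\theta) = 2\varpi_1$; in those types the BFKL module is not a fundamental module (it is, respectively, an irreducible tensor product of two fundamentals and the Kirillov--Reshetikhin module with highest weight $2\varpi_1$), so your identification route needs separate input there (e.g.\ Kashiwara's results on tensor products of good modules, resp.\ goodness of the relevant KR module). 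Second, your classical decomposition ``adjoint $\oplus$ trivial, with the $r_i$ spanning the rank-many zero weight vectors'' is wrong in the twisted types: there $\theta$ is a highest \emph{short} root, $B(\theta)$ realizes the small representation whose nonzero weights form the single Weyl orbit $W\theta$, and the number of $r_i$ equals the number of \emph{short} simple roots, not the rank. This is visible in the paper's own Figure \ref{crystal graph of B}: in type $E_6^{(2)}$ only $r_1$ and $r_2$ occur, giving $24+2+1 = 27$ elements (the $26$-dimensional $F_4$-module plus a trivial summand), not $\dim\mathfrak{g}_0 + 1 = 53$. You partially anticipated the twisted subtlety, but as written the decomposition argument and the fundamental-weight claim both need case-by-case repair before the proposition holds in the stated generality.
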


In each case, we shall write $(a_1a_2a_3a_4)$ as shorthand for any $a_1\alpha_1+a_2\alpha_2+a_3\alpha_3+a_4\alpha_4\in\Phi^{+}$.
The positive roots of $F_{4}^{t}$ are then given by
\begin{align*}
	&(1000),\quad (0100),\quad (1100),\quad (0110),\quad (1110),\quad (0111),\\
	&(1111),\quad (1210),\quad (1211),\quad (1221),\quad (1321),\quad (2321),
\end{align*}
while those of $F_{4}$ are given by
\begin{align*}
&(1000),\quad (0100),\quad (0010),\quad (0001),\quad (1100),\quad (0110),\quad (0011),\quad (1110),\\
&(0120),\quad (0111),\quad (1120),\quad (1111),\quad (0121),\quad (1220),\quad (1121),\quad (0122),\\
&(1221),\quad (1122),\quad (1231),\quad (1222),\quad (1232),\quad (1242),\quad (1342),\quad (2342).
\end{align*}
Furthermore, we shall represent each negative root $-(a_1a_2a_3a_4)\in\Phi^{-}$ by $\overline{(a_1a_2a_3a_4)}$.

In types $E_6^{(2)}$ and $F_4^{(1)}$ the only dominant integral weight $\lambda\in\Pbar^{+}$ of level $l=\lambda(c)=1$ is $\Lambda_{0}$.
It is clear from Definition \ref{def:perfect crystal} that in either case the corresponding minimal vectors are $b_{\lambda} = b^{\lambda} = \emptyset$.

Throughout the remainder of this paper, we shall color the arrows in our crystal graphs and the blocks in our Young columns and Young walls according to their label $i\in I$, in particular
\begin{align} \label{color conventions}
    {\color{red} 0\mathrm{~is~red}};\quad
    {\color{black} 1\mathrm{~is~black}};\quad
    {\color{blue} 2\mathrm{~is~blue}};\quad
    {\color{green} 3\mathrm{~is~green}};\quad
    {\color{purple} 4\mathrm{~is~purple}}.
\end{align}

Figure \ref{crystal graph of B} and Figure \ref{crystal graph of B'} contain the crystal graphs of the level $1$ perfect crystals $B$ of $U_q(E_6^{(2)})$ and $B'$ of $U_q(F_4^{(1)})$ from Theorem \ref{thm:level 1 perfect crystal}.

\begin{figure}[H] 
	\centering
	\begin{tikzpicture}[scale=1.69]
		\node (0100) at (0,2) {$(0100)$};	
		\node (1100) at (1,2) {$(1100)$};
		\node (0110) at (1,3) {$(0110)$};
		\node (0111) at (1,4) {$(0111)$};
		\node (1210) at (1,5) {$(1210)$};	    
		\node (1211) at (1,6) {$(1211)$};	    
		\node (1321) at (1,9) {$(1321)$};	    
		\node (1000) at (2,0) {$(1000)$};	   
		\node (1110) at (2,3) {$(1110)$};	    
		\node (1111) at (2,4) {$(1111)$};
		\node (1221) at (2,7) {$(1221)$};	    
		\node (2321) at (2,9) {$(2321)$};	    
		\node (2321) at (2,9) {$(2321)$};	    
		\node (r1) at (2.6,4) {$r_1$};    
		\node (empty) at (3,4.4) {$\emptyset$};
		\node (r2) at (3.5,5) {$r_2$};		
		\node (-2321) at (4,0) {$\overline{(2321)}$};		
		\node (-1221) at (4,2) {$\overline{(1221)}$};		
		\node (-1111) at (4,5) {$\overline{(1111)}$};		
		\node (-1110) at (4,6) {$\overline{(1110)}$};		
		\node (-1000) at (4,9) {$\overline{(1000)}$};		
		\node (-1321) at (5,0) {$\overline{(1321)}$};		
		\node (-1211) at (5,3) {$\overline{(1211)}$};		
		\node (-1210) at (5,4) {$\overline{(1210)}$};	
		\node (-0111) at (5,5) {$\overline{(0111)}$};		
		\node (-0110) at (5,6) {$\overline{(0110)}$};		
		\node (-1100) at (5,7) {$\overline{(1100)}$};		
		\node (-0100) at (6,7) {$\overline{(0100)}$};	
		%%%%%%%%%%%%%%%
		\draw[thick,black,->]  (2321) -- (1321);
		\draw[thick,blue,->]  (1321) -- (1221);
		\draw[thick,green,->]  (1221) -- (1211);
		\draw[thick,purple,->]  (1211) -- (1210);
		\draw[thick,blue,->]  (1211) -- (1111);
		\draw[thick,blue,->]  (1210) -- (1110);
		\draw[thick,black,->]  (1111) -- (0111);
		\draw[thick,purple,->]  (0111) -- (0110);
		\draw[thick,purple,->]  (1111) -- (1110);	
		\draw[thick,black,->]  (1110) -- (0110);
		\draw[thick,green,->]  (0110) -- (0100);	
		\draw[thick,green,->]  (1110) -- (1100);
		\draw[thick,black,->]  (1100) -- (0100);
		\draw[thick,blue,->]  (1100) -- (1000);
		%%%%%%%%%%%%%%
		\draw[thick,blue,->]  (-1000) -- (-1100);
		\draw[thick,black,->]  (-0100) -- (-1100);
		\draw[thick,green,->]  (-1100) -- (-1110);
		\draw[thick,green,->]  (-0100) -- (-0110);
		\draw[thick,black,->]  (-0110) -- (-1110);
		\draw[thick,purple,->]  (-1110) -- (-1111);
		\draw[thick,purple,->]  (-0110) -- (-0111);
		\draw[thick,black,->]  (-0111) -- (-1111);
		\draw[thick,blue,->]  (-1110) -- (-1210);
		\draw[thick,blue,->]  (-1111) -- (-1211);
		\draw[thick,purple,->]  (-1210) -- (-1211);
		\draw[thick,green,->]  (-1211) -- (-1221);
		\draw[thick,blue,->]  (-1221) -- (-1321);
		\draw[thick,black,->]  (-1321) -- (-2321);
		%%%%%%%%%%%%
		\draw[thick,black,->]  (1000) -- (r1);
		\draw[thick,black,->]  (r1) -- (-1000);
		
		\draw[thick,red,->]  (-2321) -- (empty);
		\draw[thick,red,->]  (empty) -- (2321);
		\draw[thick,blue,->]  (0100) -- (r2);
		\draw[thick,blue,->]  (r2) -- (-0100);
		%%%%%%%%%%%%
		\draw[thick,red,->,bend right=25]  (-1000) to node [swap] {} (1321);
		\draw[thick,red,->]  (-1100) -- (1221);
		\draw[thick,red,->]  (-1110) -- (1211);
		\draw[thick,red,->,bend right=25]  (-1111) to node [swap] {} (1210);
		\draw[thick,red,->,bend left=25]  (-1210) to node [swap] {} (1111);
		\draw[thick,red,->]  (-1211) -- (1110);
		\draw[thick,red,->]  (-1221) -- (1100);
		\draw[thick,red,->,bend left=25]  (-1321) to node [swap] {} (1000);

	\end{tikzpicture}
	\caption{The crystal graph of $B$}\label{crystal graph of B}
\end{figure}

\begin{figure}[H]
	\centering
	\begin{tikzpicture}[scale=1.7]
		\node (2342) at (0,0) {$(2342)$};	
		\node (1342) at (0,-1) {$(1342)$};
		\node (1242) at (-1,-2) {$(1242)$};
		\node (1232) at (-0.5,-3) {$(1232)$};
		\node (1231) at (-1.5,-3) {$(1231)$};
		\node (1231) at (-1.5,-3) {$(1231)$};
		\node (1222) at (0,-4) {$(1222)$};
		\node (1221) at (-1,-4) {$(1221)$};
		\node (1220) at (-2,-4) {$(1220)$};
		\node (1122) at (-1,-5) {$(1122)$};
		\node (1121) at (-2,-5) {$(1121)$};
		\node (1120) at (-3,-5) {$(1120)$};
		\node (0122) at (-1,-6) {$(0122)$};
		\node (0121) at (-2,-6) {$(0121)$};
		\node (0120) at (-3,-6) {$(0120)$};
		\node (1111) at (-1,-7) {$(1111)$};
		\node (1110) at (-2,-7) {$(1110)$};
		\node (0111) at (-1,-8) {$(0111)$};
		\node (0110) at (-2,-8) {$(0110)$};
		\node (1100) at (-1,-9) {$(1100)$};
		\node (0011) at (-2,-9) {$(0011)$};
		\node (0010) at (-3,-9) {$(0010)$};
		\node (0100) at (-1,-10) {$(0100)$};		
		\node (1000) at (-2,-10) {$(1000)$};		
		\node (0001) at (-1,-11) {$(0001)$};		
%%%%%%%%%%%%%%%%%%%%%%%%%%%%
		\node (-0001) at (4,0) {$\overline{(0001)}$};
		\node (-0100) at (4,-1) {$\overline{(0100)}$};		
		\node (-1000) at (5,-1) {$\overline{(1000)}$};
		\node (-1100) at (4,-2) {$\overline{(1100)}$};	
		\node (-0011) at (5,-2) {$\overline{(0011)}$};
		\node (-0010) at (6,-2) {$\overline{(0010)}$};
		\node (-0111) at (4,-3) {$\overline{(0111)}$};
		\node (-0110) at (5,-3) {$\overline{(0110)}$};
		\node (-1111) at (4,-4) {$\overline{(1111)}$};	
		\node (-1110) at (5,-4) {$\overline{(1110)}$};	
		\node (-0122) at (4,-5) {$\overline{(0122)}$};    
		\node (-0121) at (5,-5) {$\overline{(0121)}$};  
		\node (-0120) at (6,-5) {$\overline{(0120)}$};  	
		\node (-1122) at (4,-6) {$\overline{(1122)}$};  	
		\node (-1121) at (5,-6) {$\overline{(1121)}$};  
		\node (-1120) at (6,-6) {$\overline{(1120)}$};  
		\node (-1222) at (3,-7) {$\overline{(1222)}$};  
		\node (-1221) at (4,-7) {$\overline{(1221)}$};	
		\node (-1220) at (5,-7) {$\overline{(1220)}$};	
		\node (-1232) at (3.5,-8) {$\overline{(1232)}$};	
		\node (-1231) at (4.5,-8) {$\overline{(1231)}$};	
		\node (-1242) at (4,-9) {$\overline{(1242)}$};	
		\node (-1342) at (3,-10) {$\overline{(1342)}$};
		\node (-2342) at (3,-11) {$\overline{(2342)}$};
%%%%%%%%%%%%%%%%%%%%%%%%%%%%
		\node (h1) at (0.5,-5.5) {$r_1$};		
		\node (h2) at (1,-5.5) {$r_2$};	
		\node (empty) at (1.5,-5.5) {$\emptyset$};	
		\node (h3) at (2,-5.5) {$r_3$};
		\node (h4) at (2.5,-5.5) {$r_4$};	
%%%%%%%%%%%%%%%%%%%%%%%%%%%%			
\draw[thick,black,->] (2342)--(1342);	
\draw[thick,blue,->] (1342)--(1242);	
\draw[thick,green,->] (1242)--(1232);
\draw[thick,purple,->] (1232)--(1231);
\draw[thick,green,->] (1232)--(1222);
\draw[thick,green,->] (1231)--(1221);
\draw[thick,purple,->] (1222)--(1221);
\draw[thick,purple,->] (1221)--(1220);
\draw[thick,blue,->] (1222)--(1122);
\draw[thick,blue,->] (1221)--(1121);
\draw[thick,blue,->] (1220)--(1120);
\draw[thick,purple,->] (1122)--(1121);
\draw[thick,purple,->] (1121)--(1120);
\draw[thick,black,->] (1122)--(0122);
\draw[thick,black,->] (1121)--(0121);
\draw[thick,black,->] (1120)--(0120);
\draw[thick,purple,->] (0122)--(0121);
\draw[thick,purple,->] (0121)--(0120);
\draw[thick,green,->] (1121)--(1111);
\draw[thick,green,->] (1120)--(1110);
\draw[thick,green,->] (0121)--(0111);
\draw[thick,green,->] (0120)--(0110);
\draw[thick,purple,->] (1111)--(1110);
\draw[thick,black,->] (1110)--(0110);
\draw[thick,black,->] (1111)--(0111);
\draw[thick,purple,->] (0111)--(0110);
\draw[thick,green,->] (1110)--(1100);
\draw[thick,green,->] (0110)--(0100);
\draw[thick,blue,->] (0111)--(0011);
\draw[thick,blue,->] (0110)--(0010);
\draw[thick,purple,->] (0011)--(0010);
\draw[thick,black,->] (1100)--(0100);
\draw[thick,blue,->] (1100)--(1000);
\draw[thick,green,->] (0011)--(0001);
%%%%%%%%%%%%%%%%%%%%%%%%%%%%
\draw[thick,green,->] (-0001)--(-0011);
\draw[thick,black,->] (-0100)--(-1100);
\draw[thick,green,->] (-0100)--(-0110);
\draw[thick,blue,->] (-1000)--(-1100);
\draw[thick,blue,->] (-0011)--(-0111);
\draw[thick,blue,->] (-0010)--(-0110);
\draw[thick,purple,->] (-0010)--(-0011);
\draw[thick,purple,->] (-0110)--(-0111);
\draw[thick,black,->] (-0110)--(-1110);
\draw[thick,black,->] (-0111)--(-1111);
\draw[thick,purple,->] (-1110)--(-1111);
\draw[thick,green,->] (-0110)--(-0120);
\draw[thick,green,->] (-1100)--(-1110);
\draw[thick,green,->] (-1110)--(-1120);
\draw[thick,green,->] (-0111)--(-0121);
\draw[thick,green,->] (-1111)--(-1121);
\draw[thick,purple,->] (-0120)--(-0121);
\draw[thick,purple,->] (-0121)--(-0122);
\draw[thick,black,->] (-0120)--(-1120);
\draw[thick,black,->] (-0121)--(-1121);
\draw[thick,black,->] (-0122)--(-1122);
\draw[thick,purple,->] (-1120)--(-1121);
\draw[thick,purple,->] (-1121)--(-1122);
\draw[thick,blue,->] (-1120)--(-1220);	
\draw[thick,blue,->] (-1121)--(-1221);	
\draw[thick,blue,->] (-1122)--(-1222);	
\draw[thick,purple,->] (-1220)--(-1221);		
\draw[thick,purple,->] (-1221)--(-1222);	
\draw[thick,green,->] (-1221)--(-1231);	
\draw[thick,green,->] (-1222)--(-1232);		
\draw[thick,purple,->] (-1231)--(-1232);			
\draw[thick,green,->] (-1232)--(-1242);	
\draw[thick,blue,->] (-1242)--(-1342);		
\draw[thick,black,->] (-1342)--(-2342);	
%%%%%%%%%%%%%%%%%%%%%%%	
\draw[thick,red,->,bend right=20] (-1000) to node [swap] {}(1342);		
\draw[thick,red,->] (-1100) -- (1242);	
\draw[thick,red,->,bend right=10] (-1110) to node [swap] {} (1232);	
\draw[thick,red,->,bend left=10] (-1111) to node [swap] {} (1231);
\draw[thick,red,->] (-1120) -- (1222);
\draw[thick,red,->] (-1121) -- (1221);
\draw[thick,red,->] (-1122) -- (1220);
\draw[thick,red,->] (-1220) -- (1122);
\draw[thick,red,->] (-1221) -- (1121);
\draw[thick,red,->] (-1222) -- (1120);	
\draw[thick,red,->,bend right=10] (-1231) to node [swap] {} (1111);	
\draw[thick,red,->,bend left=10] (-1232) to node [swap] {} (1110);
\draw[thick,red,->] (-1242) -- (1100);	
\draw[thick,red,->,bend left=20] (-1342) to node [swap] {}(1000);		
%%%%%%%%%%%%%%%%%%%%%%%	
\draw[thick,black,->,bend right=7] (1000) to node [swap] {}(h1);
\draw[thick,black,->,bend left=11] (h1) to node [swap] {}(-1000);
\draw[thick,blue,->] (0100)--(h2);
\draw[thick,blue,->] (h2)--(-0100);
\draw[thick,green,->,bend right=10] (0010)to node [swap] {}(h3);
\draw[thick,green,->,bend left=10] (h3) to node [swap] {}(-0010);
\draw[thick,purple,->,bend right=10] (0001) to node [swap] {} (h4);
\draw[thick,purple,->,bend left=10] (h4) to node [swap] {} (-0001);
\draw[thick,red,->] (-2342) to node [swap] {} (empty);
\draw[thick,red,->] (empty) to node [swap] {} (2342);
\end{tikzpicture}
\caption{The crystal graph of $B'$}\label{perfect graph B'} \label{crystal graph of B'}
\end{figure}
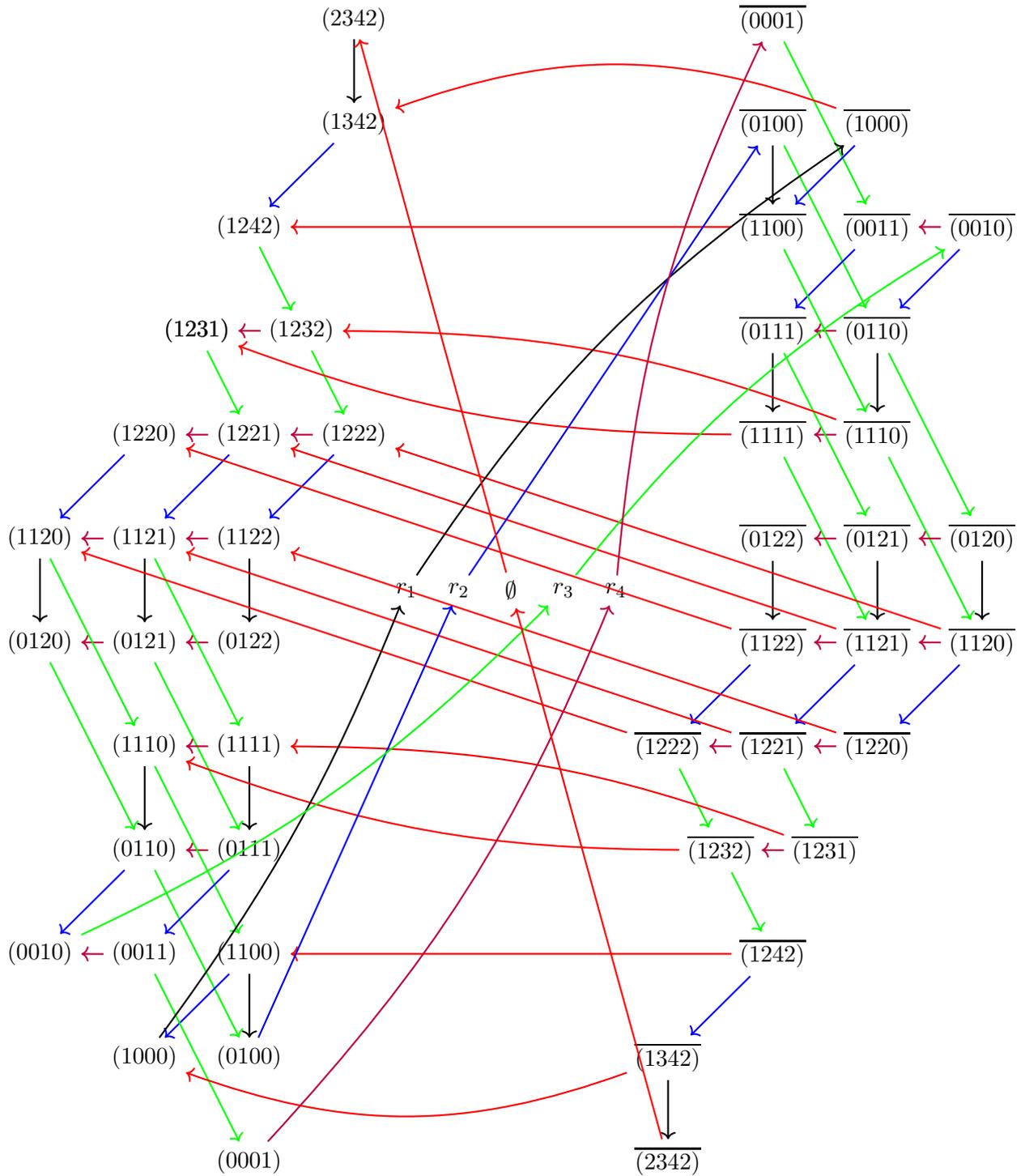

Appendix \ref{appe:HandH'values} contains the values of the energy functions $H:B\otimes B\to\mathbb Z$ and $H':B'\otimes B'\to\mathbb Z$, which we calculated using SageMath \cite{SageMath}.

In particular, both $B$ and $B'$ are isomorphic to the Kirillov-Reshetikhin crystal $B^{1,1}$ in their respective types $X_{n}^{(r)}$, and so the following code outputs a list of energy function values.

\begin{lstlisting}
sage: K = crystals.kirillov_reshetikhin.LSPaths(['X',n,r],1)
sage: K.digraph().edges()
sage: H = K.local_energy_function(K)
sage: K2 = crystals.TensorProduct(K,K)
sage: for b in K2:
          print("({},{}) {}".format(b[1],b[0],2-H(b)))
\end{lstlisting}

\begin{Rmk}
    \begin{enumerate}
        \item The factors {\tt b[0]} and {\tt b[1]} of {\tt b} are reversed in our final line of code since by default SageMath uses a reversed tensor crystal structure.
        \item We output {\tt 2-H(b)} since (after accounting for the reversed tensor structure) SageMath uses the energy function definition of \cites{Kas02,Lau23}, which is in particular \textit{minus} that of Definition \ref{def:energy function} up to constant shift.
        Moreover we normalise so that $H(\emptyset\otimes\emptyset) = 0$.
    \end{enumerate}
\end{Rmk}

\section{Young column models for level \texorpdfstring{$1$}{1} perfect crystals of \texorpdfstring{$U_q(E_6^{(2)})$}{Uq(E6(2))} and \texorpdfstring{$U_q(F_4^{(1)})$}{Uq(F4(1))}} \label{Sec:Young column models}

\subsection{Building blocks}
In this subsection we introduce the building blocks required to construct our Young column and Young wall models.

The dimensions of a cuboid are written as $* \times * \times *$, representing its $\mathrm{width} \times \mathrm{thickness} \times \mathrm{height}$ measurements.

In type $E_6^{(2)}$ the $2\times 1\times 1$ cuboid is split in four different ways via a collection of vertical cuts, as shown in Figure \ref{cutting process}.
This process produces building blocks of four different shapes -- the unit block, 1/2-unit block, 5/4-unit block and 3/4-unit block -- which are named according to their volumes.
\begin{figure}[H]
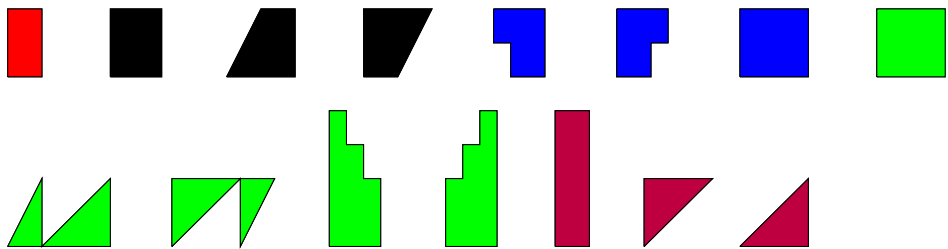

	\begin{center}
	% [inline block 0: 9 envs, 20854 chars -> data_tex | \begin{tikzpicture}[scale=0.9] 		\draw (0,0)--(2,0)--(2,1)--(0,1)--(0,0);...]


\caption{Colored blocks in type $F_{4}^{(1)}$} \label{colored F41}	
\end{figure}

We remark that blocks $(3)$, $(6)$, $(1')$, $(3')$ and $(5')$ in Figures \ref{F4(1) cuboid building blocks} and \ref{F4(1) non-cuboid building blocks} can be obtained from blocks $(4)$, $(7)$, $(2')$, $(4')$ and $(6')$ respectively by $180^{\circ}$ rotation around the vertical axis.

\subsection{Young columns}
Figure \ref{Young column patterns} contains the Young column patterns for types $E_{6}^{(2)}$ and $F_{4}^{(1)}$.

\vspace{-7pt}

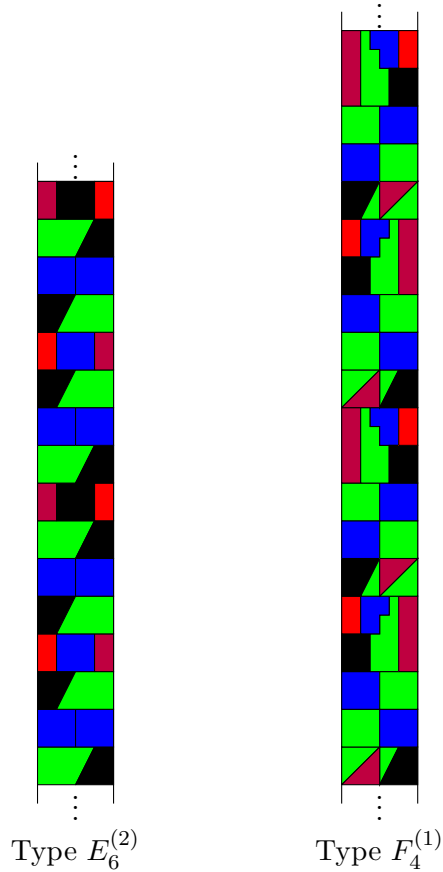
\begin{figure}[H]
	\centering
	\begin{tikzpicture}[scale=0.5]
%%%%%%%%%%%E_6^{(2)}%%%%%%%%%%%%%%%		
	\draw (0,0)--(0,8)--(2,8)--(2,0);
	\draw (0,7)--(2,7);
	\draw (0,6)--(2,6);	
	\draw (0,5)--(2,5);		
	\draw (0,4)--(2,4);		
	\draw (0,3)--(2,3);		
	\draw (0,2)--(2,2);		
	\draw (0,1)--(2,1);		
\draw (0.5,3)--(0.5,4)--(1,5)--(1,6)--(1.5,7)--(1.5,8);	
\draw 	(0.5,7)--(0.5,8);
\draw (0.5,2)--(1,3);
\draw (1,1)--(1,2);
\draw (1,0)--(1.5,1);
\draw (1.5,3)--(1.5,4);	
\draw (0,0)--(0,-0.5);	
\draw (2,0)--(2,-0.5);
\node at (1,-0.4) {$\vdots$};

\filldraw[fill=green,fill opacity=0.8]  (0,0)--(1,0)--(1.5,1)--(0,1)--(0,0);		
\filldraw[fill=black,fill opacity=0.8]  (1,0)--(2,0)--(2,1)--(1.5,1)--(1,0);	
\filldraw[fill=blue,fill opacity=0.8]  (0,1)--(1,1)--(1,2)--(0,2)--(0,1);	
\filldraw[fill=blue,fill opacity=0.8]  (1,1)--(2,1)--(2,2)--(1,2)--(1,1);		
\filldraw[fill=black,fill opacity=0.8]  (0,2)--(0.5,2)--(1,3)--(0,3)--(0,2);
\filldraw[fill=green,fill opacity=0.8]  (0.5,2)--(2,2)--(2,3)--(1,3)--(0.5,2);
\filldraw[fill=red,fill opacity=0.8]  (0,3)--(0,4)--(0.5,4)--(0.5,3)--(0,3);
\filldraw[fill=blue,fill opacity=0.8]  (0.5,3)--(0.5,4)--(1.5,4)--(1.5,3)--(0.5,3);
\filldraw[fill=purple,fill opacity=0.8]  (1.5,3)--(1.5,4)--(2,4)--(2,3)--(1.5,3);
\filldraw[fill=black,fill opacity=0.8]  (0,4)--(0,5)--(1,5)--(0.5,4)--(0,4);
\filldraw[fill=green,fill opacity=0.8]  (0.5,4)--(2,4)--(2,5)--(1,5)--(0.5,4);
\filldraw[fill=blue,fill opacity=0.8]  (0,5)--(0,6)--(1,6)--(1,5)--(0,5);
\filldraw[fill=blue,fill opacity=0.8]  (1,5)--(1,6)--(2,6)--(2,5)--(1,5);
\filldraw[fill=green,fill opacity=0.8]  (0,6)--(0,7)--(1.5,7)--(1,6)--(0,6);
\filldraw[fill=black,fill opacity=0.8]  (1,6)--(2,6)--(2,7)--(1.5,7)--(1,6);
\filldraw[fill=purple,fill opacity=0.8]  (0,7)--(0,8)--(0.5,8)--(0.5,7)--(0,7);
\filldraw[fill=black,fill opacity=0.8]  (0.5,7)--(0.5,8)--(1.5,8)--(1.5,7)--(0.5,7);
\filldraw[fill=red,fill opacity=0.8]  (1.5,7)--(1.5,8)--(2,8)--(2,7)--(1.5,7);
\node at (1,-1.7) {Type $E_6^{(2)}$};

%%%%%%%%%%%%%%%%%%%%%%%%%%%%
\begin{scope}[shift={(0,8)}]
	\draw (0,8)--(0,8.5);
	\draw (2,8)--(2,8.5);
\node at (1,8.6) {$\vdots$};			
	\draw (0,0)--(0,8)--(2,8)--(2,0);
\draw (0,7)--(2,7);
\draw (0,6)--(2,6);	
\draw (0,5)--(2,5);		
\draw (0,4)--(2,4);		
\draw (0,3)--(2,3);		
\draw (0,2)--(2,2);		
\draw (0,1)--(2,1);		
\draw (0.5,3)--(0.5,4)--(1,5)--(1,6)--(1.5,7)--(1.5,8);	
\draw 	(0.5,7)--(0.5,8);
\draw (0.5,2)--(1,3);
\draw (1,1)--(1,2);
\draw (1,0)--(1.5,1);
\draw (1.5,3)--(1.5,4);	
\filldraw[fill=green,fill opacity=0.8]  (0,0)--(1,0)--(1.5,1)--(0,1)--(0,0);		
\filldraw[fill=black,fill opacity=0.8]  (1,0)--(2,0)--(2,1)--(1.5,1)--(1,0);	
\filldraw[fill=blue,fill opacity=0.8]  (0,1)--(1,1)--(1,2)--(0,2)--(0,1);	
\filldraw[fill=blue,fill opacity=0.8]  (1,1)--(2,1)--(2,2)--(1,2)--(1,1);		
\filldraw[fill=black,fill opacity=0.8]  (0,2)--(0.5,2)--(1,3)--(0,3)--(0,2);
\filldraw[fill=green,fill opacity=0.8]  (0.5,2)--(2,2)--(2,3)--(1,3)--(0.5,2);
\filldraw[fill=red,fill opacity=0.8]  (0,3)--(0,4)--(0.5,4)--(0.5,3)--(0,3);
\filldraw[fill=blue,fill opacity=0.8]  (0.5,3)--(0.5,4)--(1.5,4)--(1.5,3)--(0.5,3);
\filldraw[fill=purple,fill opacity=0.8]  (1.5,3)--(1.5,4)--(2,4)--(2,3)--(1.5,3);
\filldraw[fill=black,fill opacity=0.8]  (0,4)--(0,5)--(1,5)--(0.5,4)--(0,4);
\filldraw[fill=green,fill opacity=0.8]  (0.5,4)--(2,4)--(2,5)--(1,5)--(0.5,4);
\filldraw[fill=blue,fill opacity=0.8]  (0,5)--(0,6)--(1,6)--(1,5)--(0,5);
\filldraw[fill=blue,fill opacity=0.8]  (1,5)--(1,6)--(2,6)--(2,5)--(1,5);
\filldraw[fill=green,fill opacity=0.8]  (0,6)--(0,7)--(1.5,7)--(1,6)--(0,6);
\filldraw[fill=black,fill opacity=0.8]  (1,6)--(2,6)--(2,7)--(1.5,7)--(1,6);
\filldraw[fill=purple,fill opacity=0.8]  (0,7)--(0,8)--(0.5,8)--(0.5,7)--(0,7);
\filldraw[fill=black,fill opacity=0.8]  (0.5,7)--(0.5,8)--(1.5,8)--(1.5,7)--(0.5,7);
\filldraw[fill=red,fill opacity=0.8]  (1.5,7)--(1.5,8)--(2,8)--(2,7)--(1.5,7);

\end{scope}	
%%%%%%%%%%%F_4^{(1)}%%%%%%%%%%%%%%%	
\begin{scope}[shift={(8,0)}]
\draw (0,0)--(0,10)--(2,10)--(2,0);
\draw (0,1)--(2,1);
\draw (0,2)--(2,2);
\draw (0,3)--(2,3);
\draw (0,5)--(2,5);
\draw (0,6)--(2,6);
\draw (0,7)--(2,7);
\draw (0,8)--(2,8);
\draw (0,0)--(1,1);
\draw (1,0)--(1,1);
\draw (1,0)--(1.5,1);
\draw (1,1)--(1,2);
\draw (1,2)--(1,3);
\draw (0.75,3)--(0.75,4);
\draw (0,4)--(1,4)--(1,4.5)--(1.25,4.5)--(1.25,5);
\draw (0.5,4)--(0.5,5);
\draw (1.5,3)--(1.5,5);
\draw (0.5,5)--(1,6);
\draw (1,5)--(1,6);
\draw (1,5)--(2,6);
\draw (1,6)--(1,8);
\draw (0.5,8)--(0.5,10);
\draw (1.25,8)--(1.25,9)--(1,9)--(1,9.5)--(0.75,9.5)--(0.75,10);
\draw (1.25,9)--(2,9);
\draw (1.5,9)--(1.5,10);
\draw (0,0)--(0,-0.5);	
\draw (2,0)--(2,-0.5);
\node at (1,-0.4) {$\vdots$};

\filldraw[fill=green,fill opacity=0.8]  (0,0)--(0,1)--(1.5,1)--(1,0)--(1,1)--(0,0);
\filldraw[fill=purple,fill opacity=0.8]  (0,0)--(1,0)--(1,1)--(0,0);
\filldraw[fill=black,fill opacity=0.8]  (1,0)--(1.5,1)--(2,1)--(2,0)--(1,0);
\filldraw[fill=green,fill opacity=0.8]  (0,1)--(0,2)--(1,2)--(1,1)--(0,1);
\filldraw[fill=blue,fill opacity=0.8]  (1,1)--(1,2)--(2,2)--(2,1)--(1,1);
\filldraw[fill=blue,fill opacity=0.8]  (0,2)--(0,3)--(1,3)--(1,2)--(0,2);
\filldraw[fill=green,fill opacity=0.8]  (1,2)--(1,3)--(2,3)--(2,2)--(1,2);
\filldraw[fill=black,fill opacity=0.8]  (0,3)--(0,4)--(0.75,4)--(0.75,3)--(0,3);
\filldraw[fill=green,fill opacity=0.8]  (0.75,3)--(0.75,4)--(1,4)--(1,4.5)--(1.25,4.5)--(1.25,5)--(1.5,5)--(1.5,3)--(0.75,3);
\filldraw[fill=purple,fill opacity=0.8]  (1.5,3)--(1.5,5)--(2,5)--(2,3)--(1.5,3);
\filldraw[fill=red,fill opacity=0.8]  (0,4)--(0,5)--(0.5,5)--(0.5,4)--(0,4);
\filldraw[fill=blue,fill opacity=0.8]  (0.5,4)--(0.5,5)--(1.25,5)--(1.25,4.5)--(1,4.5)--(1,4)--(0.5,4);
\filldraw[fill=black,fill opacity=0.8]  (0,5)--(0.5,5)--(1,6)--(0,6)--(0,5);
\filldraw[fill=green,fill opacity=0.8]  (1,6)--(0.5,5)--(2,5)--(2,6)--(1,5)--(1,6);
\filldraw[fill=purple,fill opacity=0.8]  (1,5)--(1,6)--(2,6)--(1,5);
\filldraw[fill=blue,fill opacity=0.8]  (0,6)--(0,7)--(1,7)--(1,6)--(0,6);
\filldraw[fill=green,fill opacity=0.8]  (1,6)--(1,7)--(2,7)--(2,6)--(1,6);
\filldraw[fill=green,fill opacity=0.8]  (0,7)--(0,8)--(1,8)--(1,7)--(0,7);
\filldraw[fill=blue,fill opacity=0.8]  (1,7)--(1,8)--(2,8)--(2,7)--(1,7);
\filldraw[fill=purple,fill opacity=0.8]  (0,8)--(0,10)--(0.5,10)--(0.5,8)--(0,8);
\filldraw[fill=green,fill opacity=0.8]  (0.5,8)--(0.5,10)--(0.75,10)--(0.75,9.5)--(1,9.5)--(1,9)--(1.25,9)--(1.25,8)--(0.5,8);
\filldraw[fill=blue,fill opacity=0.8]  (0.75,10)--(0.75,9.5)--(1,9.5)--(1,9)--(1.5,9)--(1.5,10)--(0.75,10);
\filldraw[fill=black,fill opacity=0.8]  (1.25,8)--(2,8)--(2,9)--(1.25,9)--(1.25,8);
\filldraw[fill=red,fill opacity=0.8]  (1.5,9)--(2,9)--(2,10)--(1.5,10)--(1.5,9);
\node at (1,-1.7) {Type $F_4^{(1)}$};
\begin{scope}[shift={(0,10)}]
	\draw (0,10)--(0,10.5);
\draw (2,10)--(2,10.5);
\node at (1,10.6) {$\vdots$};	
\draw (0,0)--(0,10)--(2,10)--(2,0);
\draw (0,1)--(2,1);
\draw (0,2)--(2,2);
\draw (0,3)--(2,3);
\draw (0,5)--(2,5);
\draw (0,6)--(2,6);
\draw (0,7)--(2,7);
\draw (0,8)--(2,8);
\draw (0,0)--(1,1);
\draw (1,0)--(1,1);
\draw (1,0)--(1.5,1);
\draw (1,1)--(1,2);
\draw (1,2)--(1,3);
\draw (0.75,3)--(0.75,4);
\draw (0,4)--(1,4)--(1,4.5)--(1.25,4.5)--(1.25,5);
\draw (0.5,4)--(0.5,5);
\draw (1.5,3)--(1.5,5);
\draw (0.5,5)--(1,6);
\draw (1,5)--(1,6);
\draw (1,5)--(2,6);
\draw (1,6)--(1,8);
\draw (0.5,8)--(0.5,10);
\draw (1.25,8)--(1.25,9)--(1,9)--(1,9.5)--(0.75,9.5)--(0.75,10);
\draw (1.25,9)--(2,9);
\draw (1.5,9)--(1.5,10);
\filldraw[fill=green,fill opacity=0.8]  (0,0)--(0,1)--(1.5,1)--(1,0)--(1,1)--(0,0);
\filldraw[fill=purple,fill opacity=0.8]  (0,0)--(1,0)--(1,1)--(0,0);
\filldraw[fill=black,fill opacity=0.8]  (1,0)--(1.5,1)--(2,1)--(2,0)--(1,0);
\filldraw[fill=green,fill opacity=0.8]  (0,1)--(0,2)--(1,2)--(1,1)--(0,1);
\filldraw[fill=blue,fill opacity=0.8]  (1,1)--(1,2)--(2,2)--(2,1)--(1,1);
\filldraw[fill=blue,fill opacity=0.8]  (0,2)--(0,3)--(1,3)--(1,2)--(0,2);
\filldraw[fill=green,fill opacity=0.8]  (1,2)--(1,3)--(2,3)--(2,2)--(1,2);
\filldraw[fill=black,fill opacity=0.8]  (0,3)--(0,4)--(0.75,4)--(0.75,3)--(0,3);
\filldraw[fill=green,fill opacity=0.8]  (0.75,3)--(0.75,4)--(1,4)--(1,4.5)--(1.25,4.5)--(1.25,5)--(1.5,5)--(1.5,3)--(0.75,3);
\filldraw[fill=purple,fill opacity=0.8]  (1.5,3)--(1.5,5)--(2,5)--(2,3)--(1.5,3);
\filldraw[fill=red,fill opacity=0.8]  (0,4)--(0,5)--(0.5,5)--(0.5,4)--(0,4);
\filldraw[fill=blue,fill opacity=0.8]  (0.5,4)--(0.5,5)--(1.25,5)--(1.25,4.5)--(1,4.5)--(1,4)--(0.5,4);
\filldraw[fill=black,fill opacity=0.8]  (0,5)--(0.5,5)--(1,6)--(0,6)--(0,5);
\filldraw[fill=green,fill opacity=0.8]  (1,6)--(0.5,5)--(2,5)--(2,6)--(1,5)--(1,6);
\filldraw[fill=purple,fill opacity=0.8]  (1,5)--(1,6)--(2,6)--(1,5);
\filldraw[fill=blue,fill opacity=0.8]  (0,6)--(0,7)--(1,7)--(1,6)--(0,6);
\filldraw[fill=green,fill opacity=0.8]  (1,6)--(1,7)--(2,7)--(2,6)--(1,6);
\filldraw[fill=green,fill opacity=0.8]  (0,7)--(0,8)--(1,8)--(1,7)--(0,7);
\filldraw[fill=blue,fill opacity=0.8]  (1,7)--(1,8)--(2,8)--(2,7)--(1,7);
\filldraw[fill=purple,fill opacity=0.8]  (0,8)--(0,10)--(0.5,10)--(0.5,8)--(0,8);
\filldraw[fill=green,fill opacity=0.8]  (0.5,8)--(0.5,10)--(0.75,10)--(0.75,9.5)--(1,9.5)--(1,9)--(1.25,9)--(1.25,8)--(0.5,8);
\filldraw[fill=blue,fill opacity=0.8]  (0.75,10)--(0.75,9.5)--(1,9.5)--(1,9)--(1.5,9)--(1.5,10)--(0.75,10);
\filldraw[fill=black,fill opacity=0.8]  (1.25,8)--(2,8)--(2,9)--(1.25,9)--(1.25,8);
\filldraw[fill=red,fill opacity=0.8]  (1.5,9)--(2,9)--(2,10)--(1.5,10)--(1.5,9);	
\end{scope}
\end{scope}
\end{tikzpicture}
\vspace{-2pt}
\caption{Young column patterns for types $E_{6}^{(2)}$ and $F_{4}^{(1)}$} \label{Young column patterns}
\end{figure}

\begin{defn}\label{def:pre-Young column}
	A {\it pre-Young column} is a continuous part of the Young column pattern such that
	\begin{enumerate}
		\item[(1)] the height is bounded above,
		\item[(2)] there is no empty space below any block.
	\end{enumerate}
\end{defn}

\begin{defn}\label{def:Young column}\hfill
\begin{enumerate}
	\item 	A block in a pre-Young column is \textit{free} if removing it produces another pre-Young column.
	\item A pre-Young column is \textit{exceptional} if
\begin{enumerate}
\item there is precisely one free $2$-block at the top of the column,
\item the column contains a free $1$-block.
\end{enumerate}
\item If a pre-Young column is not exceptional then it is called a \textit{Young column}.
\end{enumerate}	
	
\end{defn}

\begin{defn}\label{def:equivalent}
    Young columns are \textit{equivalent} if they can be obtained from one another by vertical shift and $180^{\circ}$ rotation around the vertical axis.
\end{defn}

In Figure \ref{Young column E} we list all equivalence classes of Young columns in type $E_6^{(2)}$ using Definitions \ref{def:pre-Young column}, \ref{def:Young column} and \ref{def:equivalent}.
Moreover we label each class with an element of the level $1$ perfect crystal $B$.

\input{Young_columns_E}

\begin{Rmk}
The following is the unique exceptional pre-Young column in type $E_{6}^{(2)}$.
\begin{figure}[H]
	\centering
\begin{tikzpicture}[scale=0.5]
\draw (0,0)--(0,8)--(2,8)--(2,0);
		\draw (0,7)--(2,7);
		\draw (0,6)--(2,6);	
		\draw (0,5)--(2,5);		
		\draw (0,4)--(2,4);		
		\draw (0,3)--(2,3);		
		\draw (0,2)--(2,2);		
		\draw (0,1)--(2,1);		
		\draw (0.5,3)--(0.5,4)--(1,5)--(1,6)--(1.5,7)--(1.5,8);	
		\draw 	(0.5,7)--(0.5,8);
		\draw (0.5,2)--(1,3);
		\draw (1,1)--(1,2);
		\draw (1,0)--(1.5,1);
		\draw (1.5,3)--(1.5,4);	
		\draw (0,0)--(0,-0.5);	
		\draw (2,0)--(2,-0.5);
		\node at (1,-0.4) {$\vdots$};
		
		\filldraw[fill=green,fill opacity=0.8]  (0,0)--(1,0)--(1.5,1)--(0,1)--(0,0);		
		\filldraw[fill=black,fill opacity=0.8]  (1,0)--(2,0)--(2,1)--(1.5,1)--(1,0);	
		\filldraw[fill=blue,fill opacity=0.8]  (0,1)--(1,1)--(1,2)--(0,2)--(0,1);	
		\filldraw[fill=blue,fill opacity=0.8]  (1,1)--(2,1)--(2,2)--(1,2)--(1,1);		
		\filldraw[fill=black,fill opacity=0.8]  (0,2)--(0.5,2)--(1,3)--(0,3)--(0,2);
		\filldraw[fill=green,fill opacity=0.8]  (0.5,2)--(2,2)--(2,3)--(1,3)--(0.5,2);
		\filldraw[fill=red,fill opacity=0.8]  (0,3)--(0,4)--(0.5,4)--(0.5,3)--(0,3);
		\filldraw[fill=blue,fill opacity=0.8]  (0.5,3)--(0.5,4)--(1.5,4)--(1.5,3)--(0.5,3);
		\filldraw[fill=purple,fill opacity=0.8]  (1.5,3)--(1.5,4)--(2,4)--(2,3)--(1.5,3);
		\filldraw[fill=black,fill opacity=0.8]  (0,4)--(0,5)--(1,5)--(0.5,4)--(0,4);
		\filldraw[fill=green,fill opacity=0.8]  (0.5,4)--(2,4)--(2,5)--(1,5)--(0.5,4);
		%\filldraw[fill=blue,fill opacity=0.8]  (0,5)--(0,6)--(1,6)--(1,5)--(0,5);
		\filldraw[fill=blue,fill opacity=0.8]  (1,5)--(1,6)--(2,6)--(2,5)--(1,5);
		%\filldraw[fill=green,fill opacity=0.8]  (0,6)--(0,7)--(1.5,7)--(1,6)--(0,6);
		%\filldraw[fill=black,fill opacity=0.8]  (1,6)--(2,6)--(2,7)--(1.5,7)--(1,6);
		%\filldraw[fill=purple,fill opacity=0.8]  (0,7)--(0,8)--(0.5,8)--(0.5,7)--(0,7);
		%\filldraw[fill=black,fill opacity=0.8]  (0.5,7)--(0.5,8)--(1.5,8)--(1.5,7)--(0.5,7);
		%\filldraw[fill=red,fill opacity=0.8]  (1.5,7)--(1.5,8)--(2,8)--(2,7)--(1.5,7);

	\end{tikzpicture}	
\end{figure} 	
\end{Rmk}

The equivalence classes of Young columns in type $F_4^{(1)}$ are given in Figure \ref{Young column F}.

\input{Young_columns_F}

\begin{Rmk}
    The following is the unique exceptional pre-Young column in type $F_4^{(1)}$.
\begin{figure}[H]
\centering
\begin{tikzpicture}[scale=0.5]
\draw (0,0)--(0,10)--(2,10)--(2,0);
\draw (0,1)--(2,1);
\draw (0,2)--(2,2);
\draw (0,3)--(2,3);
\draw (0,5)--(2,5);
\draw (0,6)--(2,6);
\draw (0,7)--(2,7);
\draw (0,8)--(2,8);
\draw (0,0)--(1,1);
\draw (1,0)--(1,1);
\draw (1,0)--(1.5,1);
\draw (1,1)--(1,2);
\draw (1,2)--(1,3);
\draw (0.75,3)--(0.75,4);
\draw (0,4)--(1,4)--(1,4.5)--(1.25,4.5)--(1.25,5);
\draw (0.5,4)--(0.5,5);
\draw (1.5,3)--(1.5,5);
\draw (0.5,5)--(1,6);
\draw (1,5)--(1,6);
\draw (1,5)--(2,6);
\draw (1,6)--(1,8);
\draw (0.5,8)--(0.5,10);
\draw (1.25,8)--(1.25,9)--(1,9)--(1,9.5)--(0.75,9.5)--(0.75,10);
\draw (1.25,9)--(2,9);
\draw (1.5,9)--(1.5,10);
\draw (0,0)--(0,-0.5);	
\draw (2,0)--(2,-0.5);
\node at (1,-0.4) {$\vdots$};

\filldraw[fill=green,fill opacity=0.8]  (0,0)--(0,1)--(1.5,1)--(1,0)--(1,1)--(0,0);
\filldraw[fill=purple,fill opacity=0.8]  (0,0)--(1,0)--(1,1)--(0,0);
\filldraw[fill=black,fill opacity=0.8]  (1,0)--(1.5,1)--(2,1)--(2,0)--(1,0);
\filldraw[fill=green,fill opacity=0.8]  (0,1)--(0,2)--(1,2)--(1,1)--(0,1);
\filldraw[fill=blue,fill opacity=0.8]  (1,1)--(1,2)--(2,2)--(2,1)--(1,1);
\filldraw[fill=blue,fill opacity=0.8]  (0,2)--(0,3)--(1,3)--(1,2)--(0,2);
\filldraw[fill=green,fill opacity=0.8]  (1,2)--(1,3)--(2,3)--(2,2)--(1,2);
\filldraw[fill=black,fill opacity=0.8]  (0,3)--(0,4)--(0.75,4)--(0.75,3)--(0,3);
\filldraw[fill=green,fill opacity=0.8]  (0.75,3)--(0.75,4)--(1,4)--(1,4.5)--(1.25,4.5)--(1.25,5)--(1.5,5)--(1.5,3)--(0.75,3);
\filldraw[fill=purple,fill opacity=0.8]  (1.5,3)--(1.5,5)--(2,5)--(2,3)--(1.5,3);
\filldraw[fill=red,fill opacity=0.8]  (0,4)--(0,5)--(0.5,5)--(0.5,4)--(0,4);
\filldraw[fill=blue,fill opacity=0.8]  (0.5,4)--(0.5,5)--(1.25,5)--(1.25,4.5)--(1,4.5)--(1,4)--(0.5,4);
\filldraw[fill=black,fill opacity=0.8]  (0,5)--(0.5,5)--(1,6)--(0,6)--(0,5);
\filldraw[fill=green,fill opacity=0.8]  (1,6)--(0.5,5)--(2,5)--(2,6)--(1,5)--(1,6);
\filldraw[fill=purple,fill opacity=0.8]  (1,5)--(1,6)--(2,6)--(1,5);
%	\filldraw[fill=blue,fill opacity=0.8]  (0,6)--(0,7)--(1,7)--(1,6)--(0,6);
\filldraw[fill=green,fill opacity=0.8]  (1,6)--(1,7)--(2,7)--(2,6)--(1,6);
%	\filldraw[fill=green,fill opacity=0.8]  (0,7)--(0,8)--(1,8)--(1,7)--(0,7);
\filldraw[fill=blue,fill opacity=0.8]  (1,7)--(1,8)--(2,8)--(2,7)--(1,7);
%\filldraw[fill=purple,fill opacity=0.8]  (0,8)--(0,10)--(0.5,10)--(0.5,8)--(0,8);
%\filldraw[fill=green,fill opacity=0.8]  (0.5,8)--(0.5,10)--(0.75,10)--(0.75,9.5)--(1,9.5)--(1,9)--(1.25,9)--(1.25,8)--(0.5,8);
%\filldraw[fill=blue,fill opacity=0.8]  (0.75,10)--(0.75,9.5)--(1,9.5)--(1,9)--(1.5,9)--(1.5,10)--(0.75,10);
%\filldraw[fill=black,fill opacity=0.8]  (1.25,8)--(2,8)--(2,9)--(1.25,9)--(1.25,8);
%\filldraw[fill=red,fill opacity=0.8]  (1.5,9)--(2,9)--(2,10)--(1.5,10)--(1.5,9);	
\end{tikzpicture}	
\end{figure} 	
\end{Rmk}

In each type we denote by $y_v$ the equivalence class of Young columns labelled by an element $v$ of the level $1$ perfect crystal $B$ or $B'$, as displayed in Figures \ref{Young column E} and \ref{Young column F}.

\begin{defn}
Let $y$ be a Young column.
    \begin{enumerate}
        \item An $i$-block in $y$ is \textit{removable} if removing it from $y$ produces another Young column.
        \item An $i$-block in the Young column pattern which is not in $y$ is \textit{addable} if adding it to $y$ produces another Young column.
    \end{enumerate}
\end{defn}

We can endow the set of Young columns with the structure of an affine crystal.
\begin{defn} \label{def:Young column crystal structure}
    \begin{enumerate}
        \item $\ft_{i}$ acts on a Young column $y$ by adding an addable $i$-block if it exists and mapping to $0$ otherwise, with the following caveats.
        \begin{enumerate}
            \item If $y$ has two addable $i$-blocks then applying $\ft_{i}$ adds the higher one.
            \item $\ft_{i}(y) = 0$ whenever $y$ lies in an equivalence class $y_{r_{j}}$ for some $j\not= i$.
        \end{enumerate}
        \item $\et_{i}$ acts on a Young column $y$ by removing a removable $i$-block if it exists and mapping to $0$ otherwise, with the following caveats.
        \begin{enumerate}
            \item If $y$ has two removable $i$-blocks then applying $\et_{i}$ removes the lower one.
            \item $\et_{i}(y) = 0$ whenever $y$ lies in an equivalence class $y_{r_{j}}$ for some $j\not= i$.
        \end{enumerate}
        \item Define $\varphi_{i}(y) = \max\lbrace n ~\vert~ \ft_{i}^{n}y\neq 0\rbrace$ and $\varepsilon_{i}(y) = \max\lbrace n ~\vert~ \et_{i}^{n}y\neq 0\rbrace$.
        \item Fix $\wt(y) = \Lambda_{0}$ for some $y$ in the equivalence class $y_{\emptyset}$ and extend to all Young columns with the conditions $\wt(\ft_{i}y) = \wt(y) - \alpha_{i}$ if $\ft_{i}y\not= 0$ and $\wt(\et_{i}y) = \wt(y) + \alpha_{i}$ if $\et_{i}y\not= 0$ from Definition \ref{def:crystal}.
    \end{enumerate}
\end{defn}

By projecting the weights to $\Pbar$ this descends to a classical crystal structure on the set of \textit{equivalence classes} of Young columns, which we denote by $C$ and $C'$ in types $E_{6}^{(2)}$ and $F_{4}^{(1)}$ respectively.
Conversely, the affinizations of $C$ and $C'$ are precisely the original affine crystals of Young columns.

Comparing the crystals graphs in Figures \ref{crystal graph of B} and \ref{perfect graph B'} with these classical crystal structures and our Young column patterns, we see that $C$ and $C'$ provide us with combinatorial models for $B$ and $B'$.

\begin{Prop} \label{Young column realization}
    The map $\phi : y_{v} \mapsto v$ defines an isomorphism of classical crystals $C \xrightarrow{\sim} B$ (resp. $C' \xrightarrow{\sim} B'$).
\end{Prop}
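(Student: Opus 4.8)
By construction $\phi$ is a bijection of the underlying sets: Figures~\ref{Young column E} and~\ref{Young column F} exhaust the equivalence classes of Young columns, and attach to each a distinct label $v\in B$ (resp.\ $v\in B'$), so that every element of the perfect crystal of Theorem~\ref{thm:level 1 perfect crystal} occurs exactly once. Extending by $\phi(0)=0$, it remains to verify that $\phi$ is a morphism of classical crystals, since a bijective morphism is an isomorphism. Recall from Definition~\ref{def:Young column crystal structure} that $\varepsilon_i$ and $\varphi_i$ on Young columns are the lengths of the $\tilde{e}_i$- and $\tilde{f}_i$-strings, while on $B$ and $B'$ these functions are the string lengths of a seminormal crystal. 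Hence once $\phi$ is shown to intertwine the Kashiwara operators --- equivalently, once the crystal graph produced by the block rules is shown to coincide, under the labelling, with the graph of Figure~\ref{crystal graph of B} (resp.\ Figure~\ref{crystal graph of B'}) --- the functions $\varepsilon_i,\varphi_i$ agree on both sides, and the classical weights then agree by Definition~\ref{def:crystal}(1) together with the common value $\wt(y_\emptyset)\leftrightarrow\wt(\emptyset)$ at the base vector. The proposition thus reduces to matching the two crystal graphs.

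I would carry this out arrow by arrow, organised by the colour $i\in I$ of the block involved. First I would treat the arrows with $i\neq 0$ (the black, blue, green and purple blocks). For these the relevant region of the Young column pattern in Figure~\ref{Young column patterns} is engineered to encode the finite-type crystal of $F_4^{t}$ (resp.\ $F_4$) on the subset $\{x_{\pm\alpha}\}\cup\{r_i\}$, and one checks that adding the unique addable $i$-block --- or, where two are present, the higher one, as prescribed in Definition~\ref{def:Young column crystal structure} --- carries the representative of $y_v$ to that of $y_w$ precisely when $v\xrightarrow{i}w$ in the non-red part of the graph. The branch points of the graph correspond exactly to columns admitting two addable $i$-blocks, so it is the ``add the higher block'' rule that selects the correct target. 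I would then treat the arrows with $i=0$ given by the red blocks, verifying in particular the segment $x_{-\theta}\xrightarrow{0}\emptyset\xrightarrow{0}x_{\theta}$ together with the remaining $0$-arrows, of the form $x_\alpha\xrightarrow{0}x_{\alpha+\theta}$, that connect the negative-root columns to the positive-root columns as in \eqref{eq:crys}.

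The delicate points, and what I expect to be the main obstacle, are the columns in the classes $y_{r_j}$ and the interplay of the block rules with the exceptional pre-Young columns singled out in the Remarks following Figures~\ref{Young column E} and~\ref{Young column F}. A naive ``add the highest addable block'' rule would there either produce an exceptional (hence non-Young) column or route an $r_j$-column along a wrong edge; this is exactly what the caveats of Definition~\ref{def:Young column crystal structure} --- forbidding $\tilde{f}_i$ and $\tilde{e}_i$ on $y_{r_j}$ when $j\neq i$, and admitting only operations that yield a genuine Young column --- are designed to prevent. I would check case by case that, with these caveats, each $r_j$ lies on exactly the two $j$-coloured edges $x_{\alpha_j}\xrightarrow{j}r_j\xrightarrow{j}x_{-\alpha_j}$ required by \eqref{eq:crys}, and that no spurious edge is introduced at the exceptional column. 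One must also confirm that the block operations descend to equivalence classes, i.e.\ are invariant under vertical shift and under the $180^{\circ}$ rotation that interchanges the mirrored building blocks; this is where the rotational symmetry recorded for the blocks is used.

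Having matched the graphs and deduced the agreement of $\varepsilon_i,\varphi_i$ and $\wt$ as above, $\phi$ is an isomorphism of classical crystals, and the identical argument gives $C'\xrightarrow{\sim}B'$. Since the whole scheme relies on Figures~\ref{Young column E} and~\ref{Young column F} being exhaustive, a complete write-up should also include the (finite, combinatorial) verification that a non-exceptional column within the Young column pattern cannot rise beyond the heights shown there, so that no further equivalence classes exist.
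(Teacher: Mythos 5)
Your proposal is correct and follows essentially the same route as the paper: the paper justifies the proposition precisely by comparing the crystal graphs of $C$ and $C'$ (displayed in Appendix \ref{realization of B and B'}) with those of $B$ and $B'$ in Figures \ref{crystal graph of B} and \ref{crystal graph of B'}, with the Remark after the proposition playing the same role as your discussion of the $y_{r_j}$ caveats and exceptional columns. Your additional observations --- that $\varepsilon_i,\varphi_i$ and hence $\wt$ are forced once the graphs match, and that exhaustiveness of the lists of equivalence classes must be checked --- are exactly the implicit ingredients of the paper's graph-comparison argument, spelled out.
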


\begin{Rmk}
    The caveats in Definition \ref{def:Young column crystal structure} are to ensure that we have well-defined $\et_{i}$ and $\ft_{i}$ maps, $i$-strings
    $y_{\overline{(abcd)}}\xrightarrow{0} y_{\emptyset} \xrightarrow{0} y_{(abcd)}$
    and
    $y_{(abcd)}\xrightarrow{j} y_{r_{j}} \xrightarrow{j} y_{\overline{(abcd)}}$
    for each $j\not= 0$, and no other arrows incident to any $y_{r_{j}}$.
\end{Rmk}

The crystal graphs of $C$ and $C'$ are displayed in Appendix \ref{realization of B and B'}.
We emphasise that their construction has been purely combinatorial, and in particular independent of the algebraic theory originally used by Benkart-Frenkel-Kang-Lee to define the crystals $B$ and $B'$.

\section{Young wall models for the level \texorpdfstring{$1$}{1} highest weight crystals of \texorpdfstring{$U_q(E_6^{(2)})$}{Uq(E6(2))} and \texorpdfstring{$U_q(F_4^{(1)})$}{Uq(F4(1))}} \label{Sec:highest weight Young wall realization}

We now combine our Young column models for $B$ and $B'$ with the path realization of Section \ref{Sec:Crystals} in order to obtain Young wall models for the level $1$ irreducible highest weight crystals $B(\lambda)$ in types $E_6^{(2)}$ and $F_4^{(1)}$.

Recall that in each type the unique weight $\lambda\in\Pbar^{+}$ of level $1$ is $\lambda = \Lambda_{0}$, with minimal vectors $b_{\lambda} = b^{\lambda} = \emptyset$ in $B$ and $B'$ and ground-state path $\pb_{\lambda} = (\emptyset)_{k=0}^{\infty}$.

Arranging Young columns from the corresponding equivalence class $y_{\emptyset}$ at the same height and orientation produces the \emph{Young wall patterns} and \emph{ground-state walls}.

\input{Young_wall_patterns}

\input{Ground_state_walls}

We call the columns of the ground-state wall the {\it ground-state columns}.

\begin{defn} \label{def:Young wall}
    In each type, a Young wall is a collection of blocks stacked inside the Young wall pattern such that
    \begin{enumerate}
        \item it differs from the ground-state wall in finitely many blocks,
        \item each column of the wall is a Young column.
    \end{enumerate}
\end{defn}

Many papers assume two further conditions for their Young walls, the first of which we shall call the \emph{right block property}:
\begin{itemize}
    \item[\textendash] if a Young wall contains a block, then it must contain the block occupying the same position in the column to the right,
    $\hfill \refstepcounter{equation}(\theequation)\label{right block property}$
    \item[\textendash] a Young wall must be built on top of the ground-state wall.
    $\hfill \refstepcounter{equation}(\theequation)\label{built on ground-state wall property}$
\end{itemize}
We have removed these assumptions from our definition since it is not immediately clear that they should hold for the Young walls in our models for $B(\lambda)$.
Indeed, we shall see in Section \ref{Sec:Fock space Young wall realization} that they do not hold in general for the walls in our models for the Fock space crystals.

Nevertheless, with Proposition \ref{highest weight right block property proposition} and Corollary \ref{highest weight built on ground-state wall corollary} respectively, we prove that these conditions are in fact satisfied by the \textit{reduced} Young walls which form our models for $B(\lambda)$.

Throughout this section we shall usually write a Young wall $Y$ as a sequence $(\dots,y_2,y_1,y_0)$ of Young columns, considered only up to equivalence as elements of $C$ or $C'$.
Let us denote by $|y_k|$ (resp. ${|y_k|}_0$) the difference in the number of blocks (resp. $0$-blocks) between $Y$ and the ground-state wall in column $k$.

Recall that Appendix \ref{appe:HandH'values} contains the values of the energy functions $H$ on $B\cong C$ and $H'$ on $B'\cong C'$.

\begin{defn} \label{def:reduced} \hfill
\begin{enumerate}
    \item A pair of adjacent columns $(y_{k+1},y_{k})$ in a Young wall $Y$ in type $E_{6}^{(2)}$ is reduced if
    \begin{equation} \label{reduced equation}
    H(y_{k+1}\otimes y_{k}) + |y_{k+1}|_{0} - |y_{k}|_{0}
    = H^{\mathrm{aff}}(y_{k+1}({|y_{k+1}|}_0)\otimes y_k({|y_k|}_0))
    = 0,
    \end{equation}
    and similarly in type $F_{4}^{(1)}$ with $H$ replaced by $H'$.
    \item A Young wall $Y$ is reduced if every pair $(y_{k+1},y_{k})$ is reduced.
\end{enumerate}
\end{defn}

We shall denote the set of reduced Young walls by $\Ycal(\lambda)$.

\begin{Prop} \label{highest weight right block property proposition}
    In types $E_{6}^{(2)}$ and $F_{4}^{(1)}$ every reduced Young wall satisfies the right block property.
\end{Prop}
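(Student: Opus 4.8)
The plan is to reduce the statement to a finite verification over pairs of adjacent columns and then check it directly. Since a Young wall $Y = (\dots, y_2, y_1, y_0)$ differs from the ground-state wall in only finitely many blocks and each $y_k$ is a Young column, the right block property \eqref{right block property} holds for $Y$ as soon as it holds for every adjacent pair $(y_{k+1}, y_k)$; that is, as soon as each block of the left column $y_{k+1}$ is matched by a block occupying the same position in the right column $y_k$. For columns lying outside the finite region where $Y$ differs from the ground-state wall, both $y_{k+1}$ and $y_k$ are ground-state columns and there is nothing to check, so only finitely many pairs require attention.

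The key reduction is that the reduced condition pins down the relative vertical position of the two columns. Rearranging \eqref{reduced equation} gives $|y_{k+1}|_0 - |y_k|_0 = -H(y_{k+1} \otimes y_k)$ (and similarly with $H'$ in type $F_4^{(1)}$), where the right-hand side depends only on the equivalence classes of $y_{k+1}$ and $y_k$ via Proposition \ref{Young column realization}. Inspecting the Young column patterns in Figure \ref{Young column patterns}, the $0$-blocks (the red blocks) occur exactly once per vertical period of the pattern, so raising a column by one period increases its $0$-block count by exactly one. Consequently the integer $|y_{k+1}|_0 - |y_k|_0$ determines the vertical offset between $y_{k+1}$ and $y_k$ uniquely, up to a simultaneous vertical shift of the pair, which affects neither the offset nor the right block property. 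Thus for each ordered pair of equivalence classes there is essentially one relative placement compatible with reducedness, and the desired domination becomes a property of this single placement.

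It then remains to carry out the finite check: for each ordered pair of classes $(v, w)$ I would read off the value $H(v \otimes w)$ (resp. $H'(v \otimes w)$) from the tables in Appendix \ref{appe:HandH'values}, use it to fix the relative height of the two columns as above, and verify from the explicit shapes listed in Figures \ref{Young column E} and \ref{Young column F} that at this height $w$ contains every block present in $v$. The number of cases is cut down substantially by the two symmetries already built into the set-up: the $180^{\circ}$-rotation equivalence of Definition \ref{def:equivalent}, and the bar involution $v \mapsto \overline{v}$, under which both the energy functions and the column shapes behave compatibly; organising the remaining cases by the shape of the top block of each column keeps the bookkeeping manageable. The main obstacle is precisely the size of this case analysis, together with the care needed to treat the exceptional classes $y_{r_j}$ and to confirm that the $0$-block count determines the offset with no phase ambiguity. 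Once these points are settled, the domination of $v$ by $w$ can be read off directly in every reduced configuration.
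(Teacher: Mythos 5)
Your reduction to adjacent pairs, and your observation that the reduced condition forces $|y_{k+1}|_0 - |y_k|_0 = -H(y_{k+1}\otimes y_k)$ and hence fixes the relative placement of the two columns, is sound and is exactly what the paper records as Proposition \ref{adjacent column}. The gap is that everything after this point --- the actual verification that at this placement the right column contains every block of the left column --- is deferred: you describe checking all $729$ pairs in type $E_{6}^{(2)}$ and $2809$ pairs in type $F_{4}^{(1)}$ by reading relative heights off the energy tables and comparing block sets in Figures \ref{Young column E} and \ref{Young column F}, but you do not carry this out, and that check is the entire mathematical content of the proposition. There is also a subtlety you flag but never settle: an equivalence class contains rotation-related representatives as well as vertical shifts (e.g.\ the two distinct columns both labelled $(1221)$ in Figure \ref{Young column E}), and these can be stacked in the same position of the Young wall pattern with different block sets, so before "one relative placement per pair of classes" makes sense one must argue that the prescribed $0$-block count singles out a unique actual column.

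The paper's route is genuinely different and avoids both problems. It deduces the statement as an immediate corollary of the stronger Proposition \ref{Fock right block proposition}, which treats proper Young walls (the condition $\neq -1$ rather than $=0$). The key idea there is to encode "same block position, one column to the right" as the automorphism $z : b(n) \mapsto b(n-1)$ of $\Baff$, so that the local right block property for a pair becomes the existence of a directed path $a(m-1)\rightarrow\dots\rightarrow b(n)$ in $\Baff$ (adding a block corresponds to following an arrow); this works directly with the affinization, where the shift/rotation ambiguity never arises. After Lemma \ref{path to itself lemma} reduces to the case $n = H(a\otimes b)$, the question becomes whether $B$ contains a path from $a$ to $b$ using exactly $H(a\otimes b)$ zero-arrows, and this is settled uniformly by computing the minimal number of $0$-arrows $\mathrm{Arr}^{0}(a,b)$ via a weighted shortest-path computation in SageMath and checking $0 \leq H(a\otimes b) - \mathrm{Arr}^{0}(a,b) \leq 2$. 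That reformulation is what makes the finite verification feasible and machine-checkable; your proposal lacks any analogous mechanism, so as written it is a plan for a proof rather than a proof.
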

\begin{proof}
 This is an immediate consequence of Proposition \ref{Fock right block proposition}.
\end{proof}

\begin{Cor} \label{highest weight built on ground-state wall corollary}
    In types $E_{6}^{(2)}$ and $F_{4}^{(1)}$ every reduced Young wall is built on top of the ground-state wall.
\end{Cor}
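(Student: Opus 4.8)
The plan is to derive the statement as a short consequence of the right block property from Proposition \ref{highest weight right block property proposition}, combined with the finiteness condition in Definition \ref{def:Young wall}(1). The essential idea is that the right block property forces any ``gap'' in the wall to propagate indefinitely in one direction, which is incompatible with a Young wall differing from the ground-state wall in only finitely many blocks.

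First I would restate the right block property in contrapositive form. Writing a reduced Young wall as $Y = (\dots, y_2, y_1, y_0)$, the property says that a block present at a given position in column $y_k$ forces the presence of the block in the same position of the column $y_{k-1}$ immediately to its right. Equivalently, the \emph{absence} of a block at a position in $y_{k-1}$ forces its absence at the same position in $y_k$. Iterating, any missing block propagates leftward: if the block at a fixed position is absent from column $y_j$, then it is absent from every column $y_{j'}$ with $j' \geq j$.

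Next I would argue by contradiction. Suppose $Y \in \Ycal(\lambda)$ is not built on top of the ground-state wall $Y_{\Lambda_0}$. Then some column $y_j$ of $Y$ fails to contain a block belonging to $Y_{\Lambda_0}$; fix such a position. Since the ground-state wall is obtained by placing translates of the same ground-state column at the same height and orientation in every column (Figure \ref{ground-state walls}), this position carries a ground-state block \emph{in all columns simultaneously}. By the leftward propagation just established, $Y$ is missing this block in every column $y_{j'}$ with $j' \geq j$, so $Y$ differs from $Y_{\Lambda_0}$ in infinitely many blocks. This contradicts Definition \ref{def:Young wall}(1), and the corollary follows.

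The only point genuinely requiring care is the identification of positions across columns: I would verify that, because the ground-state columns all occupy the same height and orientation in the Young wall pattern, the phrase ``the same position in the column to the right'' appearing in the right block property coincides precisely with the position of the corresponding ground-state block. Once this alignment is confirmed, the two-step propagation argument closes the proof, so no substantial obstacle remains beyond invoking Proposition \ref{highest weight right block property proposition}.
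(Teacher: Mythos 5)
Your proof is correct and uses essentially the same argument as the paper: both derive the corollary from the right block property of Proposition \ref{highest weight right block property proposition} together with the finiteness condition in Definition \ref{def:Young wall}(1). The only difference is that you run the argument in contrapositive form (a missing ground-state block propagates leftward, producing infinitely many differences), whereas the paper argues directly (far-left columns must equal ground-state columns, whose blocks then propagate rightward), which is the same reasoning repackaged.
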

\begin{proof}
    This follows from the right block property above, since a Young wall differs from the ground-state wall in finitely many blocks and thus matches it in all columns sufficiently far to the left.
\end{proof}

\begin{Prop}\label{adjacent column}
    If a pair of adjacent columns $(y_{k+1},y_{k})$ in a Young wall is reduced then $|y_{k}| - |y_{k+1}|$ is a fixed non-negative integer.
\end{Prop}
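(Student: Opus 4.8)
The plan is to treat the two assertions separately: \emph{fixedness} will come from combining the reducedness condition (which controls $0$-blocks) with the vertical periodicity of the Young wall pattern (which converts this into a statement about all blocks), while \emph{non-negativity} will follow from the right block property.

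First I would rewrite the reducedness of the pair. By Definition \ref{def:reduced} together with the definition of $H^{\mathrm{aff}}$, the pair $(y_{k+1},y_k)$ is reduced exactly when
\begin{equation*}
|y_k|_0 - |y_{k+1}|_0 = H(y_{k+1}\otimes y_k)
\end{equation*}
(and similarly with $H'$ in type $F_4^{(1)}$). Via the isomorphisms $C\cong B$ and $C'\cong B'$ of Proposition \ref{Young column realization}, the right-hand side is a function of the equivalence classes of $y_{k+1}$ and $y_k$ alone. Thus reducedness already pins down the relative number of $0$-blocks of the two columns purely in terms of their classes.

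The key step is then a periodicity lemma: for any single column $y$ placed inside the Young wall pattern,
\begin{equation*}
|y| = \tfrac{m}{d}\,|y|_0 + r(y),
\end{equation*}
where $m$ and $d$ are the numbers of blocks and of $0$-blocks, respectively, in one vertical period of the pattern (constants depending only on the type), and $r(y)$ depends only on the equivalence class of $y$. To prove this I would use that the Young column pattern (Figure \ref{Young column patterns}) is periodic under vertical translation by the height of $\delta$; since the admissible placements of a fixed class differ by whole periods, each of which contributes $m$ blocks of which $d$ are $0$-blocks, the quantity $|y|-\tfrac{m}{d}|y|_0$ is unchanged by vertical shift. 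It is likewise unchanged by the $180^{\circ}$ rotation, which merely permutes the blocks while preserving their colours and hence both $|y|$ and $|y|_0$. Therefore $r(y):=|y|-\tfrac{m}{d}|y|_0$ descends to a well-defined function of the equivalence class. Combining this with the displayed reducedness identity yields
\begin{equation*}
|y_k| - |y_{k+1}| = \tfrac{m}{d}\,H(y_{k+1}\otimes y_k) + r(y_k) - r(y_{k+1}),
\end{equation*}
which depends only on the classes of $y_{k+1}$ and $y_k$; this is the asserted fixed integer.

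For non-negativity I would invoke the right block property. Reducedness of the pair forces the column $y_k$ to dominate $y_{k+1}$ block by block, which is precisely the right block property for this pair; since that property is local to the pair, it holds by Proposition \ref{highest weight right block property proposition} (and its pairwise proof). As the ground-state columns all coincide, $|y_k|-|y_{k+1}|$ equals the difference in the raw block counts of $y_k$ and $y_{k+1}$, so the domination gives $|y_k|\ge|y_{k+1}|$. I expect the main obstacle to be the periodicity lemma: one must verify against the explicit patterns and the colouring conventions of Section \ref{Sec:Young column models} that each period contains constant numbers $m$ and $d$ of blocks and $0$-blocks, that these counts are invariant under the $180^{\circ}$ rotation, and that the exceptional-column and two-addable-block subtleties of Definitions \ref{def:Young column} and \ref{def:Young column crystal structure} do not disturb this bookkeeping.
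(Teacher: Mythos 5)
Your proposal is correct and follows essentially the same route as the paper's proof: reducedness pins down $|y_k|_0 - |y_{k+1}|_0 = H(y_{k+1}\otimes y_k)$, the structure of the Young column pattern then converts this into a fixed value of $|y_k|-|y_{k+1}|$, and non-negativity is deduced from Proposition \ref{highest weight right block property proposition}; your periodicity lemma is simply an explicit, quantitative form of the paper's appeal to ``inspecting the Young column patterns'' in Figure \ref{Young column patterns}. One bookkeeping correction: placements of a fixed equivalence class inside the pattern differ by \emph{half}-period shifts combined with the $180^{\circ}$ rotation (not by whole periods, and pure rotation is not a symmetry of the pattern), but since each half-period contains exactly $m/2$ blocks of which $d/2$ are $0$-blocks, the quantity $|y|-\tfrac{m}{d}|y|_0$ is still invariant under these symmetries and your formula survives.
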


\begin{proof}
    We proceed as in \cite{FHKS23}*{Proposition 3.14} and \cite{Lau23}*{Proposition 4.6}.
    Namely, for each choice of $y_{k+1}$ and $y_{k}$ (up to equivalence) there is precisely one value of $|y_{k}|_{0} - |y_{k+1}|_{0}$ such that $(y_{k+1},y_{k})$ is reduced by definition.
    By inspecting the Young column patterns of Figure \ref{Young column patterns} we see that this in turn fixes $|y_{k}| - |y_{k+1}|$, which must be non-negative by Proposition \ref{highest weight right block property proposition}.
\end{proof}

It follows that up to vertical shift there are exactly $|B|^{2} = 729$ and $|B'|^{2} = 2809$ pairs of reduced adjacent columns in types $E_{6}^{(2)}$ and $F_{4}^{(1)}$ respectively.

We shall now define the structure of an affine crystal on the set of reduced Young walls $\Ycal(\lambda)$.
Recall from Definition \ref{def:Young column crystal structure} (3) that $\varphi_{i}(y)$ (resp. $\varepsilon_{i}(y)$) is the maximum number of $i$-blocks which can be added to (resp. removed from) a Young column $y$ sequentially, while still remaining a Young column.

\begin{defn}
    The \textit{$i$-signature} of $y$ is the sequence $\sign_{i}(y) = \underbrace{-\dots-}_{\varepsilon_{i}(y)}\underbrace{+\dots+}_{\varphi_{i}(y)}$.
\end{defn}

For each Young wall $Y = (\dots,y_2,y_1,y_0)$ we define the \textit{pre-$i$-signature} to be the (possibly infinite) sequence
\begin{equation*}
    \presign_{i}(Y) = \dots\sign_{i}(y_{2})\sign_{i}(y_{1})\sign_{i}(y_{0})
\end{equation*}
of $+$'s and $-$'s.
Cancelling every $+-$ pair leaves a finite number of $-$'s followed by a finite number of $+$'s, reading from left to right, called the \textit{$i$-signature} $\sign_{i}(Y)$ of $Y$.

\medskip

We define $\tilde{E}_i Y$ to be the Young wall obtained from $Y$ by applying $\et_{i}$ to the column containing the rightmost $-$ in $\sign_{i}(Y)$ if it exists, and $0$ otherwise.

\medskip

We define $\tilde{F}_i Y$ to be the Young wall obtained from $Y$ by applying $\ft_{i}$ to the column containing the leftmost $+$ in $\sign_{i}(Y)$ if it exists, and $0$ otherwise.

\medskip

This is called the \textit{tensor product rule} for Young walls.

\begin{Prop}\label{prop:close}
    For any $Y\in\mathcal Y(\lambda)$ we have $\tilde{E}_iY,\tilde{F}_iY\in\mathcal Y(\lambda)\cup \{0\}$.
\end{Prop}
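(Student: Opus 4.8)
The plan is to recognise that the tensor product rule defining $\tilde E_i$ and $\tilde F_i$ is exactly the Kashiwara action on a semi-infinite affine tensor product of affinized Young columns, and that the reduced condition \eqref{reduced equation} is precisely the vanishing of $\Haff$ on each adjacent pair; the preservation of this vanishing will then follow from Theorem \ref{constant}. Concretely, I would identify a Young wall $Y = (\dots,y_2,y_1,y_0)$ with the element $\dots\otimes y_2({|y_2|}_0)\otimes y_1({|y_1|}_0)\otimes y_0({|y_0|}_0)$ of $(\Baff)^{\otimes\infty}$, using $C\cong B$ (resp. $C'\cong B'$) from Proposition \ref{Young column realization}. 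Since affinization changes neither $\varphi_i$ nor $\varepsilon_i$, the $i$-signature of $Y$ is computed in the same way in this affine product, so the tensor product rule for Young walls agrees with the affine crystal operators there.

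First I would dispose of the structural point. By construction $\tilde F_i$ (resp. $\tilde E_i$) modifies exactly one column $y_k$, replacing it by $\ft_i y_k$ (resp. $\et_i y_k$), with the grading ${|y_k|}_0$ shifting by $\delta_{i0}$ in agreement with the affinization, while every other column is left untouched. As $\ft_i y_k$ is again a Young column by Definition \ref{def:Young column crystal structure} and only finitely many blocks change, the output is a genuine Young wall, or else $0$ precisely when the relevant uncancelled sign is absent. Thus the only pairs whose reducedness could fail are $(y_{k+1},y_k)$ and $(y_k,y_{k-1})$; all others are literally unchanged.

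The heart of the argument is to localise the global signature rule to these two pairs using associativity of the tensor product. Write $L = \dots\otimes y_{k+1}$ (columns to the left of $k$) and $R = y_{k-1}\otimes\dots\otimes y_0$ (columns to the right), so $Y$ regroups as $L\otimes y_k\otimes R$. Because the global rule selects the factor $y_k$, grouping first as $(L\otimes y_k)\otimes R$ and then as $L\otimes(y_k\otimes R)$ forces, via the formulas \eqref{tensor product of crystals}, the inequalities $\varphi_i(L)\le\varepsilon_i(y_k)$ and $\varphi_i(y_k)>\varepsilon_i(R)$. Since $\varphi_i(L)\ge\varphi_i(y_{k+1})$ and $\varepsilon_i(R)\ge\varepsilon_i(y_{k-1})$, these yield $\varphi_i(y_{k+1})\le\varepsilon_i(y_k)$ and $\varphi_i(y_k)>\varepsilon_i(y_{k-1})$. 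Hence for the pair $(y_{k+1},y_k)$ the local rule applies $\ft_i$ to the right factor, and for $(y_k,y_{k-1})$ it applies $\ft_i$ to the left factor; in both cases $\ft_i$ acts on $y_k$, so the affinized pair is carried within a single connected component of $\Baff\otimes\Baff$.

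Finally, since $\Haff$ is constant on connected components of $\Baff\otimes\Baff$ by Theorem \ref{constant}, its value on each affected pair is unchanged, hence still $0$, so both pairs remain reduced; every other pair is reduced trivially. Therefore $\tilde F_iY\in\Ycal(\lambda)\cup\{0\}$, and the symmetric argument using the rightmost uncancelled $-$, the operator $\et_i$, and the reversed inequalities settles $\tilde E_iY$. I expect the main obstacle to be the localisation step: one must check that the associativity regrouping genuinely matches the global signature selection with the correct factor of each adjacent pair, and keep careful account of the grading ${|y_k|}_0$ (in particular its $\delta_{i0}$ shift) so that the identification with $(\Baff)^{\otimes\infty}$ and the invocation of Theorem \ref{constant} are valid.
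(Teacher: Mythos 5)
Your proposal is correct and takes essentially the same route as the paper's own proof: both isolate the two adjacent pairs $(y_{k+1},y_k)$ and $(y_k,y_{k-1})$ affected by the operator, deduce from the signature rule that $\ft_i$ (resp. $\et_i$) acts on the factor $y_k$ within each pair so that the affinized pairs move inside a single connected component of $\Baff\otimes\Baff$, and then invoke Theorem \ref{constant} to conclude the value $0$ of $\Haff$ is preserved, all other pairs being untouched. Your associativity/monotonicity argument simply fills in the step the paper labels ``easy to see'' (and your inequality $\varphi_i(y_{k+1})\le\varepsilon_i(y_k)$ is the correct and sufficient form of the paper's stated strict inequality, since the tensor product rule sends $\ft_i$ to the right factor exactly when $\varphi_i\le\varepsilon_i$).
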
 

\begin{proof}
    Let $Y=(\dots,y_{k+1},y_k,y_{k-1},\dots,y_2,y_1,y_0)$ be a reduced Young wall.
    If $\tilde{F}_iY = 0$ then we are done, so instead suppose that
    $$\tilde{F}_iY=(\dots,y_{k+1},z_k,y_{k-1},\dots,y_2,y_1,y_0)$$
    where $z_k$ is obtained by adding an addable $i$-block to $y_k$.
    
    From the tensor product rule for Young walls it is easy to see that $\varphi_{i}(y_{k+1})<\varepsilon_{i}(y_k)$ and $\varphi_{i}(y_{k})>\varepsilon_{i}(y_{k-1})$, hence by (\ref{tensor product of crystals}) we have $\ft_{i}(y_{k+1},y_k) = (y_{k+1},z_k)$ and $\ft_{i}(y_{k},y_{k-1}) = (z_k,y_{k-1})$.
    It then follows from Theorem \ref{constant} that
	\begin{align*}
		&H^{\mathrm{aff}}(y_{k+1}({|y_{k+1}|}_0)\otimes y_k({|y_k|}_0
		))=H^{\mathrm{aff}}(y_{k+1}({|y_{k+1}|}_0)\otimes z_k({|z_k|}_0
		))=0,\\
		&H^{\mathrm{aff}}(y_{k}({|y_{k}|}_0)\otimes y_{k-1}({|y_{k-1}|}_0
		))=H^{\mathrm{aff}}(z_{k}({|z_{k}|}_0)\otimes  y_{k-1}({| y_{k-1}|}_0
		))=0.
	\end{align*}
    
    Since all other pairs of adjacent columns in $\tilde{F}_iY$ are the same as in $Y$ and thus satisfy (\ref{reduced equation}), the Young wall $\tilde{F}_iY$ is reduced.
    One can prove that $\tilde{E}_iY\in\mathcal Y(\lambda)\cup \{0\}$ in a similar manner.
\end{proof}

Let us furthermore define maps
$\varepsilon_i,\varphi_i : \mathcal Y(\lambda) \longrightarrow \Z$
and
$\wt : \mathcal Y(\lambda) \longrightarrow P$
by
\begin{equation*}
    \begin{aligned}\mbox{}
        \varepsilon_i(Y) &= \text{the number of $-$'s in } \sign_{i}(Y),\\
        \varphi_i(Y) &= \text{the number of $+$'s in } \sign_{i}(Y),\\
        \wt(Y) &= \lambda - \sum_{i\in I} k_i \alpha_i,
	\end{aligned}
\end{equation*}
where $k_i$ is the number of $i$-blocks in $Y$ that have been added to the ground-state wall.
The following result is then proved via a routine check.

\begin{Thm}\label{thm:affine crystal structure}
    The maps
    $\tilde{E}_i,\tilde{F}_i : \mathcal Y(\lambda) \rightarrow \mathcal Y(\lambda)\cup \{0\}$,
    $\varepsilon_i,\varphi_i : \mathcal Y(\lambda) \rightarrow \Z$
    and $\wt : \mathcal Y(\lambda) \rightarrow P$ defined above endow the set of reduced Young walls $\mathcal Y(\lambda)$ with the structure of an affine crystal.
\end{Thm}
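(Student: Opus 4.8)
The plan is to verify the seven conditions of Definition \ref{def:crystal} for the set $\mathcal Y(\lambda)$. The guiding principle is that the tensor product rule for Young walls is simply the combinatorial signature implementation of the crystal tensor product \eqref{tensor product of crystals}, applied to the semi-infinite sequence of Young columns $(\dots,y_2,y_1,y_0)$, each regarded up to equivalence as an element of the classical crystal $C$ (resp. $C'$). Since $C\cong B$ (resp. $C'\cong B'$) is a genuine crystal by Proposition \ref{Young column realization}, and a tensor product of crystals is again a crystal under \eqref{tensor product of crystals}, the axioms should be inherited from this ambient (semi-infinite) tensor product, with Proposition \ref{prop:close} guaranteeing that $\mathcal Y(\lambda)$ is closed under $\tilde E_i$ and $\tilde F_i$.

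First I would check that the operators and statistics are well-defined, which is the only genuinely non-formal point. Because a Young wall differs from the ground-state wall in only finitely many blocks (Definition \ref{def:Young wall}(1)), all but finitely many columns equal $y_\emptyset$. For $i\neq 0$ the column $y_\emptyset$ has empty $i$-signature, so $\presign_i(Y)$ is finite and $\sign_i(Y)$ is immediate. For $i=0$ one has $\varepsilon_0(y_\emptyset)=\varphi_0(y_\emptyset)=1$, so the ground-state tail contributes an infinite alternating string $\dots-+-+$; here I would show that under $+-$ cancellation the $+$ of each ground-state column pairs off with the $-$ of the column immediately to its right, so the entire tail reduces to a single surviving $+$ attached to the rightmost ground-state column. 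Hence $\sign_0(Y)$ is again finite, and the operators act on a column lying in the finite region where $Y$ departs from the ground state. This is the step I expect to be the main obstacle, since it is precisely where the semi-infinite nature of the wall must be controlled; it is also the prerequisite that makes the tensor product rule (and thus Proposition \ref{prop:close}) meaningful.

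With well-definedness in hand, the remaining axioms follow by the standard signature bookkeeping, exactly as in \cite{FHKS23} and \cite{Lau23}. Condition $(7)$ is vacuous since $\varphi_i$ takes values in $\mathbb Z$. Conditions $(2)$ and $(3)$ follow from the weight formula $\wt(Y)=\lambda-\sum_i k_i\alpha_i$, since $\tilde F_i$ (resp. $\tilde E_i$) changes the number $k_i$ of added $i$-blocks by $+1$ (resp. $-1$). Condition $(1)$ follows from the corresponding identity $\varphi_i(y)=\varepsilon_i(y)+\langle h_i,\wt(y)\rangle$ on each column of $C$ (resp. $C'$) together with additivity of the weight and the fact that $+-$ cancellation preserves the net excess $(\#+)-(\#-)$ of the pre-signature; the infinite ground-state tail contributes exactly $\langle h_i,\Lambda_0\rangle$ after the regularization above, recovering the base weight $\lambda$.

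Conditions $(4)$ and $(5)$ amount to the observation that $\tilde F_i$ removes the leftmost uncancelled $+$ and converts it into a $-$, thereby lowering $\varphi_i$ by one and raising $\varepsilon_i$ by one (and dually for $\tilde E_i$), while condition $(6)$ follows because the column acted on by $\tilde F_i$ at the leftmost $+$ is precisely the column at which $\tilde E_i$ acts on the rightmost $-$ of $\tilde F_i Y$, so the two operators are mutually inverse. Throughout, the caveats of Definition \ref{def:Young column crystal structure} ensure that the column-level operators $\tilde e_i,\tilde f_i$ already obey the crystal axioms via the isomorphism with $B$ (resp. $B'$), so no further case analysis of individual columns is required, and the verification reduces to the signature combinatorics together with the single semi-infinite cancellation argument.
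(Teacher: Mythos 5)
Your proposal is correct and takes essentially the approach the paper intends: the paper dismisses this theorem as ``proved via a routine check,'' and your argument is exactly that routine signature-rule verification, with the one genuinely non-formal point --- finiteness of the $0$-signature, obtained by telescoping cancellation of each ground-state column's $+$ against the $-$ of the column to its right, leaving a single surviving $+$ --- handled correctly. The remaining axioms then follow from the column-level crystal structure of $C\cong B$ (resp. $C'\cong B'$), the weight formula, and Proposition \ref{prop:close}, just as you describe, so there is nothing essential to add.
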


\begin{Thm}\label{thm:main}
    In types $E_{6}^{(2)}$ and $F_{4}^{(1)}$ there exists an isomorphism of affine crystals
	\begin{equation*}
		\mathcal Y(\lambda) \stackrel{\sim} \longrightarrow B(\lambda)
		\ \ \text{given by} \ \ 
		Y_{\lambda} \longmapsto u_{\lambda}
	\end{equation*}
    where $u_{\lambda}$ is the highest weight vector in $B(\lambda)$.
\end{Thm}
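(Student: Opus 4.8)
The plan is to identify $\mathcal{Y}(\lambda)$ with the set of $\lambda$-paths $\mathcal{P}(\lambda)$ and then invoke the affine upgrade of the path realization in Proposition \ref{prop:path realization}, following the strategy of \cites{Kang03,Lau23,FHKS23}. Writing a reduced Young wall as $Y=(\dots,y_2,y_1,y_0)$, I would define
\[
\Psi(Y) = (\phi(y_k))_{k=0}^{\infty},
\]
where $\phi\colon C\xrightarrow{\sim}B$ (resp. $C'\xrightarrow{\sim}B'$) is the classical crystal isomorphism of Proposition \ref{Young column realization}. Since $Y$ differs from the ground-state wall in only finitely many blocks, each column satisfies $y_k\in y_{\emptyset}$ and hence $\phi(y_k)=\emptyset=b_k$ for all $k\gg 0$, so $\Psi(Y)$ is a genuine $\lambda$-path; in particular $\Psi$ sends $Y_\lambda$ to the ground-state path $\mathbf{p}_\lambda = (\emptyset)_{k=0}^{\infty}$.

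First I would show that $\Psi$ is a bijection. The reduced condition (\ref{reduced equation}) rearranges to ${|y_k|}_0 = {|y_{k+1}|}_0 + H(y_{k+1}\otimes y_k)$, so together with the boundary values ${|y_k|}_0=0$ for $k\gg0$ it determines every ${|y_k|}_0$ from the sequence of equivalence classes $(\phi(y_k))$ alone. By inspecting the Young column patterns of Figure \ref{Young column patterns} — exactly as in the proof of Proposition \ref{adjacent column} — a Young column is pinned down by its equivalence class together with the value ${|y_k|}_0$; hence $Y$ is recovered from $\Psi(Y)$ and $\Psi$ is injective. For surjectivity, given any $\lambda$-path I would build the corresponding wall using the same recursion, noting that the eventual ground-state columns guarantee it differs from $Y_\lambda$ in finitely many blocks and so defines a reduced Young wall.

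Next I would verify that $\Psi$ intertwines the Kashiwara operators. Both the tensor product rule for Young walls and the path crystal structure (\ref{crystal structure on paths}) are governed by the same signature procedure applied to the sequence of $\varepsilon_i$'s and $\varphi_i$'s of the columns; since $\phi$ is a classical crystal isomorphism we have $\varepsilon_i(y_k)=\varepsilon_i(\phi(y_k))$ and $\varphi_i(y_k)=\varphi_i(\phi(y_k))$, so the $i$-signatures coincide and $\tilde{E}_i,\tilde{F}_i$ act on the same column as $\et_i,\ft_i$ do in the path, provided one checks that the caveats of Definition \ref{def:Young column crystal structure} reproduce exactly the perfect-crystal moves through the $y_{r_j}$ classes. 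Preservation of reducedness is already guaranteed by Proposition \ref{prop:close}. It then remains to match $\wt$, $\varepsilon_i$ and $\varphi_i$: on $\mathcal{Y}(\lambda)$ we have $\varepsilon_i(Y)=\max\{n\mid\tilde{E}_i^nY\neq0\}$ and likewise for $\varphi_i$, and the same holds in $\mathcal{P}(\lambda)$ as a subcrystal of an infinite tensor power of the seminormal crystals $B,B'$. Since $\Psi$ is a bijective intertwiner of the $\tilde{e}_i,\tilde{f}_i$ and $\mathcal{P}(\lambda)\cong B(\lambda)$ is connected and generated from $\mathbf{p}_\lambda$, it follows that $\mathcal{Y}(\lambda)$ is connected, that $\varepsilon_i,\varphi_i$ agree with their path counterparts, and — starting from $\wt(Y_\lambda)=\Lambda_0=\wt(\mathbf{p}_\lambda)$ and propagating via $\wt(\tilde{F}_i\,\cdot)=\wt(\cdot)-\alpha_i$ — that $\wt$ agrees throughout. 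Composing $\Psi$ with the affine isomorphism of Proposition \ref{prop:path realization} then yields the claimed affine crystal isomorphism with $Y_\lambda\mapsto u_\lambda$.

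The main obstacle is the affine (as opposed to purely classical) part of this correspondence: confirming that the physical $0$-blocks recorded by ${|y_k|}_0$ encode precisely the affine energy grading, so that the reduced condition $\Haff=0$ selects exactly the affinization indices realizing the $\delta$-weights of (\ref{affine weights on paths}). This is type-specific combinatorics requiring the energy values of Appendix \ref{appe:HandH'values} together with a careful count of the $0$-blocks in each column of the patterns in Figure \ref{Young column patterns}. I expect that, just as in the classical setting, the connectedness argument above lets us avoid a direct verification of the $\delta$-formula, thereby reducing the genuinely delicate point to the bijection of the second step — that equivalence class plus ${|y_k|}_0$ determines a single Young column — which is where the geometry of the $E_6^{(2)}$ and $F_4^{(1)}$ Young column patterns must be used in full.
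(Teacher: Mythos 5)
Your proposal is correct and follows essentially the same route as the paper's proof: reduce via the path realization to constructing an isomorphism $\mathcal{Y}(\lambda)\xrightarrow{\sim}\mathcal{P}(\lambda)$, define it column-wise through $\phi$, prove injectivity by observing that the reduced condition plus the eventual ground-state boundary pins down every column's vertical position (the paper phrases this through Proposition \ref{adjacent column} and $|y_k|$, you through the $|y_k|_0$ recursion, which is the same computation), prove surjectivity by recursively attaching reduced columns, and check intertwining via the common signature rule. Your closing discussion of the affine weight matching via connectedness is in fact slightly more explicit than the paper, which leaves that point implicit in its appeal to the energy-function upgrade of the path realization.
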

\begin{proof}
    From the path realization of Proposition \ref{prop:path realization} it is enough to show that $\mathcal Y(\lambda)\cong\mathcal P(\lambda)$. With the crystal isomorphism $\phi$ from Proposition \ref{Young column realization} we define a map $\Phi:\mathcal Y(\lambda)\to\mathcal P(\lambda)$ by
	\begin{align*}
		\Phi:(\dots,y_1,y_0)\mapsto (\dots,\phi(y_1),\phi(y_0)).
	\end{align*}
    
    Using the tensor product rule for Young walls, it is straightforward to check that $\Phi$ commutes with the crystal operators, i.e.
    $\tilde{e}_{i} \circ \Phi = \Phi \circ \tilde{E}_{i}$,
    $\tilde{f}_{i} \circ \Phi = \Phi \circ \tilde{F}_{i}$, and so on.

    Suppose that Young walls $Y=(\dots,y_1,y_0)$ and $Z=(\dots,z_1,z_0)$ are mapped by $\Phi$ to the same path in $\mathcal P(\lambda)$.
    Then $y_{k} = z_{k}$ and $y_{k+1} = z_{k+1}$ as elements of $C$ (resp. $C'$) for all $k\geq 0$.
    But since each $|y_k|-|y_{k+1}|=|z_k|-|z_{k+1}|$ by Proposition \ref{adjacent column}, and moreover $|y_k|=|z_k|$ for $k\gg 0$, it follows that $Y=Z$ and hence $\Phi$ is injective.
    
    For a given path $\mathbf{p}=(p_{k})^{\infty}_{k=0}$ in $\mathcal P(\lambda)$ we can first draw reduced adjacent columns $(y_1,y_0)$ corresponding to $(p_{1},p_{0})$, then attach a column $y_2$ corresponding to $p_{2}$ such that $(y_{2},y_{1})$ is reduced, and so on.
    By specifying that these columns eventually match up with the ground-state columns, we obtain a reduced Young wall $(\dots,y_1,y_0)$ which is sent by $\Phi$ to $\mathbf{p}$, hence $\Phi$ is surjective.
	
    It is clear that the resulting map $\mathcal{Y}(\lambda) \rightarrow B(\lambda)$ sends the ground-state wall to $u_{\lambda}$ and so our proof is complete.
\end{proof}

Appendix \ref{top crystal} displays the top part of the crystal $\mathcal Y(\Lambda_0)$ in each type.

\section{Young wall models for the level \texorpdfstring{$1$}{1} Fock space crystals of \texorpdfstring{$U_q(E_6^{(2)})$}{Uq(E6(2))} and \texorpdfstring{$U_q(F_4^{(1)})$}{Uq(F4(1))}}\label{Sec:Fock space Young wall realization}

Once again, in types $E_{6}^{(2)}$ and $F_{4}^{(1)}$ we let $\lambda = \Lambda_{0}$ be the unique level $1$ dominant integral weight in $\Pbar^{+}$, with minimal vectors $b_{\lambda} = b^{\lambda} = \emptyset$ in $B$ and $B'$ respectively and ground-state path $\pb_{\lambda} = (\emptyset)_{k=0}^{\infty}$.

Since $B$ and $B'$ are the crystal bases of good $U'_q(\mathfrak{g})$-modules by Proposition \ref{BFKL crystals are good}, they can each be used to construct the Fock space crystal $B(\Fcal(\lambda))$.

The ground-state sequence for the Fock space is $(\emptyset(k))_{k=0}^{\infty}$ since $H(\emptyset\otimes\emptyset) = 0$, and by arranging the corresponding Young columns at the same height and orientation we recover precisely the Young wall patterns and ground-state walls of Figures \ref{Young wall patterns} and \ref{ground-state walls}.

Throughout the remainder of this paper, Young walls shall be written as sequences $(y_{k}(n_{k}))_{k=0}^{\infty}$ of Young columns \textit{not up to equivalence}.

In particular, to any sequence $(p_{k}(n_{k}))_{k=0}^{\infty}$ in $\Baff$ (resp. $B'^{\aff}$) which stabilises to the ground-state sequence we can assign a unique Young wall $(y_{k}(n_{k}))_{k=0}^{\infty}$ with $\phi(y_{k}) = p_{k}$ for all $k\geq 0$.

Recall from Section \ref{Fock space preliminaries} that $(p_{k}(n_{k}))_{k=0}^{\infty}$ lies inside the Fock space crystal $B(\Fcal(\lambda))$ when it is normally ordered, whereby $(y_{k}(n_{k}))_{k=0}^{\infty}$ satisfies
\begin{align*}
    H^{\aff}(y_{k+1}(n_{k+1})\otimes y_{k}(n_{k})) < 2
    ~~\quad(\mathrm{resp.~} {(H')}^{\aff}(y_{k+1}(n_{k+1})\otimes y_{k}(n_{k})) < 2 )
\end{align*}
for all $k\geq 0$.
Combining this with Definition \ref{affine energy function} and the identity $n_{k} = k + |y_{k}|_{0}$ this condition becomes
\begin{equation} \label{proper equation}
    H(y_{k+1}\otimes y_{k}) + |y_{k+1}|_{0} - |y_{k}|_{0} \leq 0
    ~~\quad(\mathrm{resp.~} H'(y_{k+1}\otimes y_{k}) + |y_{k+1}|_{0} - |y_{k}|_{0} \leq 0 ).
\end{equation}

\begin{defn}
    A Young wall $(y_{k}(n_{k}))_{k=0}^{\infty}$ is \textit{proper} if it satisfies condition (\ref{proper equation}) for all $k\geq 0$.
\end{defn}

Denote the set of proper Young walls by $\Zcal(\lambda)$.
We can endow $\Zcal(\lambda)$ with the structure of an affine crystal exactly as we did for $\Ycal(\lambda)$ in Section \ref{Sec:highest weight Young wall realization}, using the notions of pre-$i$-signatures and $i$-signatures:
\begin{itemize}
    \item[\textendash] $\tilde{E}_{i}$ acts on the column corresponding to the rightmost $-$ in $\sign_{i}(Y)$,
    \item[\textendash] $\tilde{F}_{i}$ acts on the column corresponding to the leftmost $+$ in $\sign_{i}(Y)$,
    \item[\textendash] $\varepsilon_{i}(Y) =$ number of $-$'s in $\sign_{i}(Y)$,
    \item[\textendash] $\varphi_{i}(Y) =$ number of $+$'s in $\sign_{i}(Y)$,
    \item[\textendash] $\wt(Y) = \lambda - \sum_{i\in I} k_{i} \alpha_{i}$,
\end{itemize}
where $k_{i}$ is the difference in the number of $i$-blocks between $Y$ and the ground-state wall.
An almost identical proof to that of Proposition \ref{prop:close} shows that $\tilde{E}_{i},\tilde{F}_{i} : \Zcal(\lambda) \rightarrow \Zcal(\lambda)\cup\lbrace 0\rbrace$, while the following is verified by another routine check.

\begin{Thm}
    The maps $\tilde{E}_{i},\tilde{F}_{i} : \Zcal(\lambda) \rightarrow \Zcal(\lambda)\cup\lbrace 0\rbrace$, $\varepsilon_{i},\varphi_{i} : \Zcal(\lambda) \rightarrow \Zbb\cup\lbrace -\infty\rbrace$ and $\wt : \Zcal(\lambda) \rightarrow P$ defined above endow $\Zcal(\lambda)$ with the structure of an affine crystal.
\end{Thm}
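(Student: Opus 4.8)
The plan is to follow the same strategy as for Theorem \ref{thm:affine crystal structure}, exploiting the fact that the defining maps on $\Zcal(\lambda)$ are given by precisely the same signature (tensor product) rule as on $\Ycal(\lambda)$; only the combinatorial condition on adjacent columns has been relaxed from the equality in (\ref{reduced equation}) to the inequality in (\ref{proper equation}). Consequently a proper Young wall may be regarded as an element of the restricted semi-infinite tensor product $\cdots\otimes C'\otimes C'$ of the Young column crystals, with the $i$-signature encoding the action of $\et_i,\ft_i$ exactly as in the tensor product rule (\ref{tensor product of crystals}). The first step is to confirm that $\tilde{E}_i$ and $\tilde{F}_i$ send $\Zcal(\lambda)$ into $\Zcal(\lambda)\cup\{0\}$; as indicated in the text, this is obtained by repeating the argument of Proposition \ref{prop:close} almost verbatim, the only change being that when $\ft_i$ (or $\et_i$) replaces a column $y_k$ by $z_k$, the two affected adjacent pairs remain in the same connected component of $C'^{\aff}\otimes C'^{\aff}$, so Theorem \ref{constant} guarantees that their affine energy values are unchanged and hence still satisfy the proper inequality $H^{\aff}(\,\cdot\,)<2$.

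Second, I would check that the $i$-signature of each $Y\in\Zcal(\lambda)$ is genuinely finite, so that the operators and the counts $\varepsilon_i(Y),\varphi_i(Y)$ are well-defined. This reduces to observing that $Y$ agrees with the ground-state wall in all but finitely many columns, and that the infinite left-hand tail built from copies of the ground-state column $y_\emptyset$ contributes only finitely many uncancelled signs after cancellation (equivalently, the semi-infinite tensor $\cdots\otimes y_\emptyset\otimes y_\emptyset$ has finite $\varepsilon_i,\varphi_i$, as in the path realization). Thus $\varepsilon_i$ and $\varphi_i$ take finite non-negative integer values, and condition (7) of Definition \ref{def:crystal} holds vacuously.

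Third, the remaining axioms of Definition \ref{def:crystal} are inherited from the column crystal structure of Definition \ref{def:Young column crystal structure} together with standard properties of signatures. Conditions (2) and (3) follow directly from the block-counting definition of $\wt$, since $\ft_i$ adds a single $i$-block and $\et_i$ removes one; conditions (4), (5) and (6) follow from the fact that $\ft_i$ acts on the column carrying the leftmost surviving $+$ and $\et_i$ on the column carrying the rightmost surviving $-$, using the column-level increments $\varphi_i(\ft_i y)=\varphi_i(y)-1$ and $\varepsilon_i(\ft_i y)=\varepsilon_i(y)+1$ from Definition \ref{def:Young column crystal structure} and the invariance of the cancellation procedure under these moves. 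For condition (1) I would sum the identity $\varphi_i(y_k)-\varepsilon_i(y_k)=\langle h_i,\wt(y_k)\rangle$, valid for each column as an element of $C'$ by Proposition \ref{Young column realization}, over all $k$; cancellation gives $\varphi_i(Y)-\varepsilon_i(Y)=\sum_k(\varphi_i(y_k)-\varepsilon_i(y_k))$, which must then be matched against $\langle h_i,\wt(Y)\rangle$ computed from the block-count weight $\wt(Y)=\lambda-\sum_i k_i\alpha_i$.

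I expect the main obstacle to be precisely this last bookkeeping in condition (1): reconciling the formally infinite sum $\sum_k(\varphi_i(y_k)-\varepsilon_i(y_k))$ of column characters with the finite block-count weight $\wt(Y)$, which requires care about how the ground-state contributions in the tail are normalised so that the divergent pieces cancel and leave exactly $\langle h_i,\wt(Y)\rangle$. Once the telescoping is set up relative to the ground-state wall -- mirroring the regularisation already used for paths in (\ref{crystal structure on paths}) and (\ref{affine weights on paths}) -- the identity drops out, and the verification is otherwise the routine check advertised in the text.
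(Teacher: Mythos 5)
Your first two steps are exactly what the paper does: closure of $\Zcal(\lambda)$ under $\tilde{E}_i,\tilde{F}_i$ is obtained by rerunning Proposition \ref{prop:close}, with Theorem \ref{constant} showing that the affine energies of the two affected adjacent pairs are unchanged and hence still satisfy the strict inequality defining properness, and the finiteness of signatures together with most of the axioms of Definition \ref{def:crystal} constitute the routine check the paper alludes to. Up to that point your proposal is correct and matches the paper's approach.

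However, your verification of axiom (1) contains a genuine error. The identity you assert, $\varphi_i(Y)-\varepsilon_i(Y)=\sum_k\bigl(\varphi_i(y_k)-\varepsilon_i(y_k)\bigr)$, is false for semi-infinite walls. Test it on the ground-state wall with $i=0$: every column is $y_{\emptyset}$ with $\varepsilon_0(y_{\emptyset})=\varphi_0(y_{\emptyset})=1$, so the right-hand side equals $0$ (note the sum converges, so your worry about a ``formally infinite sum'' is not the real issue); yet the pre-$0$-signature $\cdots(-+)(-+)(-+)$ reduces, after cancelling every $+-$ pair, to the single $+$ coming from column $0$, since the $-$ of each column cancels against the $+$ of the column to its immediate left --- in any finite truncation one uncancelled $-$ survives at the far left, but in the semi-infinite wall it disappears. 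Hence the left-hand side is $1$. Cancellation preserves the difference of the numbers of $+$'s and $-$'s only for finite strings; for a semi-infinite wall a sign is lost at infinity, and it is worth exactly $\langle h_i,\lambda\rangle=\delta_{i0}$. The correct identity is $\varphi_i(Y)-\varepsilon_i(Y)=\langle h_i,\lambda\rangle+\sum_k\bigl(\varphi_i(y_k)-\varepsilon_i(y_k)\bigr)$, and this boundary term is precisely what matches the leading $\lambda$ in $\wt(Y)=\lambda-\sum_i k_i\alpha_i$, so that axiom (1) does hold --- but via this corrected bookkeeping, not via the identity you wrote. (As a second check: for $\tilde{F}_0 Y_{\lambda}$ the zeroth column becomes $y_{\theta}$ with $\varepsilon_0=2$, $\varphi_0=0$, the $0$-signature reduces to a single $-$, and indeed $\varphi_0-\varepsilon_0=-1=\langle h_0,\Lambda_0-\alpha_0\rangle$, whereas your formula would give $-2$.) Your instinct that the ground-state tail must be handled as in the path realization is the right one, but the correct diagnosis is sign loss at infinity in the cancellation, not regularisation of a divergent sum.
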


The proper Young walls thus provide a combinatorial model for the level $1$ Fock space crystal.

\begin{Thm}
    In types $E_{6}^{(2)}$ and $F_{4}^{(1)}$ there exists an isomorphism of affine crystals
	\begin{equation*}
		\mathcal Z(\lambda) \stackrel{\sim} \longrightarrow B(\Fcal(\lambda))
		\ \ \text{with} \ \ 
		Y_{\lambda} \longmapsto (b_{k}(m_{k}))_{k=0}^{\infty}
	\end{equation*}
    where $(b_{k}(m_{k}))_{k=0}^{\infty}$ is the ground-state sequence for the Fock space.
\end{Thm}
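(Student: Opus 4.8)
The plan is to build an explicit isomorphism $\Phi : \mathcal{Z}(\lambda) \to B(\Fcal(\lambda))$ out of the Young column realization $\phi$ of Proposition \ref{Young column realization}, proceeding exactly as in the proof of Theorem \ref{thm:main} but now retaining the integer shifts instead of working up to equivalence. Concretely, I would send a proper Young wall $(y_{k}(n_{k}))_{k=0}^{\infty}$ to the sequence $(\phi(y_{k})(n_{k}))_{k=0}^{\infty}$ in $\Baff$ (resp. $B'^{\aff}$), applying the classical isomorphism columnwise and copying across the labels $n_{k}$.

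First I would check that $\Phi$ is well defined and bijective onto $B(\Fcal(\lambda))$. Since any Young wall differs from the ground-state wall in only finitely many blocks, its column sequence stabilises to $(\emptyset(k))_{k=0}^{\infty}$, and $\phi(\emptyset) = \emptyset$ sends it to a sequence stabilising to the ground-state sequence $(b_{k}(m_{k}))_{k=0}^{\infty}$. The derivation of condition (\ref{proper equation}) already shows, via Definition \ref{affine energy function} and the identity $n_{k} = k + |y_{k}|_{0}$, that $(y_{k}(n_{k}))$ is proper precisely when $(\phi(y_{k})(n_{k}))$ is normally ordered; hence $\Phi$ lands in $B(\Fcal(\lambda))$ by the theorem of Kashiwara--Miwa--Petersen--Yung. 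Because $\phi$ is a bijection $C \to B$ (resp. $C' \to B'$), and a Young wall written not up to equivalence is determined uniquely by its columns together with their heights $n_{k} = k + |y_{k}|_{0}$, the map $\Phi$ restricts to a bijection between proper Young walls and normally ordered sequences stabilising to the ground state.

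Next I would verify that $\Phi$ intertwines the crystal data. The operators $\tilde{E}_{i},\tilde{F}_{i}$ on $\mathcal{Z}(\lambda)$ and the operators on $B(\Fcal(\lambda))$ prescribed by (\ref{crystal structure on paths}) are both computed by the tensor product rule applied to a sufficiently long initial segment of columns; since $\phi$ is a classical crystal isomorphism the resulting $i$-signatures coincide, so the same column is acted upon in each case, giving $\tilde{e}_{i}\circ\Phi = \Phi\circ\tilde{E}_{i}$ and $\tilde{f}_{i}\circ\Phi = \Phi\circ\tilde{F}_{i}$, while $\varepsilon_{i}$ and $\varphi_{i}$ agree as they are read off the same signatures. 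The integer labels transform compatibly because adding or removing an $i$-block changes $|y_{k}|_{0}$ by $\delta_{i0}$, which matches the affinization rule $\tilde{f}_{i}(b(n)) = (\tilde{f}_{i}b)(n+\delta_{i0})$. For the weights, the wall weight $\lambda - \sum_{i} k_{i}\alpha_{i}$ must be reconciled with the affine path weight (\ref{affine weights on paths}), which I would do by comparing block counts with the energy-function differences, with Theorem \ref{constant} controlling the $\delta$-correction term on the relevant connected component. Closure of $\mathcal{Z}(\lambda)$ under the operators is already established by the analogue of Proposition \ref{prop:close}, and the ground-state wall $Y_{\lambda}$ maps to $(b_{k}(m_{k}))$ by construction.

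I expect the classical crystal structure to come for free from $\phi$, so the main obstacle is the affine bookkeeping: confirming that the convention $n_{k} = k + |y_{k}|_{0}$ is exactly what makes the shifts transform by $\pm\delta_{i0}$ under $\tilde{e}_{0}$ and $\tilde{f}_{0}$, and that the induced affine weights genuinely match (\ref{affine weights on paths}) including its $\delta$-term on each of the infinitely many components of the Fock space crystal. Once this compatibility is pinned down --- which is precisely where Definition \ref{affine energy function}, the identity $n_{k} = k + |y_{k}|_{0}$, and Theorem \ref{constant} do the work --- the remainder is a routine verification closely paralleling Theorem \ref{thm:main}.
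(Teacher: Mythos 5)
Your proposal is correct and follows essentially the same route as the paper: the paper's proof simply takes the columnwise correspondence $(y_{k}(n_{k}))_{k=0}^{\infty} \leftrightarrow (\phi(y_{k})(n_{k}))_{k=0}^{\infty}$ already set up before the theorem, notes (via the derivation of the proper condition from the normally ordered condition using $n_{k} = k + |y_{k}|_{0}$) that it restricts to a bijection between $\mathcal{Z}(\lambda)$ and $B(\mathcal{F}(\lambda))$, and asserts that it respects the affine crystal structures. Your write-up fills in the signature and weight bookkeeping that the paper dismisses as immediate, but the underlying argument is identical.
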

\begin{proof}
    Consider the map introduced above which assigns a Young wall to each sequence in $\Baff$ (resp. $B'^{\aff}$) that stabilises to the ground-state sequence.
    It is immediate from the construction that this restricts to a bijection between $B(\Fcal(\lambda))$ and $\Zcal(\lambda)$, and moreover respects the affine crystal structures.
\end{proof}

Note that since the trivial embedding of affine crystals $\Ycal(\lambda) \hookrightarrow \Zcal(\lambda)$ surjects onto to the connected component of the ground-state wall, Figures \ref{top part 0 E62} and \ref{top part 0 F41} in Appendix \ref{top crystal} display the top part of the crystal $\Zcal(\lambda)$ as well as that of $\Ycal(\lambda)$.

\subsection{Structure of the proper Young walls} \label{proper structure}
In a similar manner to \cite{Lau23}*{§5.1} we shall investigate the structure of the proper Young walls lying inside $\Zcal(\lambda)$, which form our model for the Fock space crystal $B(\Fcal(\lambda))$.

Consider the following \textit{local right block property} for a pair of adjacent columns in a Young wall $Y$.
\begin{itemize}
    \item[\textendash] If $Y$ contains a block in column $k+1$ then it contains the block occupying the same position in column $k$.
    $\hfill \refstepcounter{equation}(\theequation)\label{adjacent right block property}$
\end{itemize}

A Young wall $Y$ then satisfies the right block property (\ref{right block property}) if condition (\ref{adjacent right block property}) holds for all $k\geq 0$.
The main result of this subsection is that proper Young walls satisfy a certain \textit{slightly weakened version} of the right block property (\ref{right block property}).

\begin{Prop} \label{Fock right block proposition}
    In types $E_{6}^{(2)}$ and $F_{4}^{(1)}$ a proper Young wall $(y_{k}(n_{k}))_{k=0}^{\infty} \in \Zcal(\lambda)$ satisfies condition (\ref{adjacent right block property}) on columns $k+1$ and $k$ whenever $H(y_{k+1}\otimes y_{k}) + |y_{k+1}|_{0} - |y_{k}|_{0} \not= -1$.
\end{Prop}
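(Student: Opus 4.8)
The plan is to reduce the statement to a finite inspection of pairs of adjacent columns, governed by the single integer
$$\eta \;=\; H(y_{k+1}\otimes y_{k}) + |y_{k+1}|_{0} - |y_{k}|_{0}$$
(resp. $H'$ in type $F_{4}^{(1)}$). By Definition \ref{affine energy function} and the identity $n_{k} = k + |y_{k}|_{0}$ we have $\eta = H^{\aff}(y_{k+1}(n_{k+1})\otimes y_{k}(n_{k})) - 1$, so properness of $(y_{k}(n_{k}))_{k=0}^{\infty}$ is exactly $\eta \leq 0$, and the proposition asks us to establish the local right block property (\ref{adjacent right block property}) on columns $k+1,k$ whenever $\eta \neq -1$, i.e. for $\eta = 0$ and for $\eta \leq -2$. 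The first observation is that whether (\ref{adjacent right block property}) holds depends only on the two equivalence classes $y_{k+1},y_{k}$ together with their relative vertical placement inside the Young wall pattern of Figure \ref{Young wall patterns}; and since $H(y_{k+1}\otimes y_{k})$ is a fixed constant read off from Appendix \ref{appe:HandH'values}, this relative placement is recorded precisely by $\eta$ through the value of $|y_{k+1}|_{0}-|y_{k}|_{0}$. Using the vertical-shift-and-rotation invariance of the pattern, it therefore suffices to analyse, for each ordered pair of classes, how (\ref{adjacent right block property}) varies with $\eta$.

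Next I would dispose of the case $\eta \leq -2$ by a gap argument. Decreasing $\eta$ by one corresponds to re-placing column $k+1$ one $0$-block lower relative to column $k$; since the $0$-blocks of the pattern occupy only one horizontal half per step, two such steps lower the whole of column $k+1$ by a full vertical period with no net change of orientation. As each Young column agrees with its ground-state column outside a band of bounded height (at most one period), once $\eta \leq -2$ the occupied part of column $k+1$ lies entirely below the corresponding part of column $k$, so (\ref{adjacent right block property}) holds automatically. This leaves only the boundary value $\eta = 0$ to be checked directly, for each of the $|B|^{2} = 729$ pairs in type $E_{6}^{(2)}$ and $|B'|^{2} = 2809$ pairs in type $F_{4}^{(1)}$.

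The core of the argument is this finite check at $\eta = 0$, carried out exactly as in \cite{Lau23}*{§5.1}. For each pair one places $y_{k+1}$ and $y_{k}$ at the unique relative height making $|y_{k+1}|_{0}-|y_{k}|_{0} = -H(y_{k+1}\otimes y_{k})$ (the reduced placement) and verifies, using the explicit block shapes of Figures \ref{colored E62} and \ref{colored F41}, that the silhouette of column $k+1$ is contained in that of column $k$. Specialising to $\eta = 0$ then also yields Proposition \ref{highest weight right block property proposition}. To see why $\eta = -1$ is genuinely excluded, one notes that lowering column $k+1$ by a single step is a half-period glide carrying a $180^{\circ}$ flip that swaps its two horizontal halves; because the lowering is by strictly less than a full period, the flipped half of column $k+1$ can protrude past column $k$, so (\ref{adjacent right block property}) may fail. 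This is precisely the transitional configuration the proposition removes.

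The main obstacle is the non-monotone dependence of (\ref{adjacent right block property}) on $\eta$: lowering a column does not uniformly help, because each $0$-block step is a half-width, orientation-reversing move, which is exactly what makes $\eta = -1$ special while both $\eta = 0$ and $\eta \leq -2$ are safe. The cleanest way to control this is to lean on the periodicity of the Young wall pattern and the rotation built into the equivalence of Young columns, collapsing the problem to the single reduced placement per pair together with the gap estimate above, and to run the two types in parallel, reading the energies off Appendix \ref{appe:HandH'values} and the geometry off Figure \ref{Young column patterns}.
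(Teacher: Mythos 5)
Your plan---encode the relative placement of adjacent columns in the single integer $\eta = H(y_{k+1}\otimes y_{k}) + |y_{k+1}|_{0} - |y_{k}|_{0}$, verify condition (\ref{adjacent right block property}) at the reduced placement $\eta = 0$ by a finite inspection, then dispose of $\eta \leq -2$ by periodicity---is coherent and genuinely different from the paper's argument, but the step handling $\eta \leq -2$ has a real gap. You assert that once $\eta \leq -2$ the top of column $k+1$ sits entirely below that of column $k$ ``automatically'', citing only that Young columns agree with their ground-state columns within a band of at most one period. This is not automatic, and the two parities of $\eta$ behave differently. For even $\eta \leq -2$ the configuration is the reduced one lowered by whole periods (no flip), so containment does follow---but only from your $\eta = 0$ check together with the fact that columns have no empty space below any block; your ordering, which settles $\eta \leq -2$ \emph{before} performing the $\eta = 0$ check, conceals this dependence. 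More seriously, for odd $\eta \leq -3$ the configuration is the $\eta = -1$ configuration lowered by whole periods, and containment at $\eta = -1$ genuinely fails for some pairs---that is exactly what the proposition excludes. To conclude you need a quantitative bound: for every pair of classes, the protrusion of the transported column $k+1$ past column $k$ at $\eta = -1$ is at most one full period. Your band remark does not deliver this without further work; one must either identify all pairs that fail at $\eta = -1$ (an extra finite check you never mention---in the paper these turn out to involve only $y_{\emptyset}$ and $y_{\pm\theta}$, with a protrusion of a single $0$-block) or prove a uniform bound on the boundary variation of Young columns. As written, the odd case is unproven.

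For comparison, the paper avoids silhouette geometry altogether. Transporting a column from slot $k+1$ to slot $k$ is the automorphism $z : b(n) \mapsto b(n-1)$ of $\Baff$, and since adding a block is exactly traversing an arrow, condition (\ref{adjacent right block property}) is equivalent to the existence of a directed path $z(y_{k+1}(n_{k+1})) \rightarrow \dots \rightarrow y_{k}(n_{k})$ in $\Baff$. Lemma \ref{path to itself lemma} (paths $b(n)\rightarrow b(n+2)$ and $b(n)\rightarrow b(n+3)$ always exist, while $b(n)\rightarrow b(n+1)$ exists unless $b\in\lbrace\emptyset,\pm\theta\rbrace$) reduces everything to the existence of a path $a(0)\rightarrow\dots\rightarrow b(H(a\otimes b))$, i.e.\ to comparing $H(a\otimes b)$ with the minimal number $\mathrm{Arr}^{0}(a,b)$ of $0$-arrows on a path from $a$ to $b$ in $B$; a weighted shortest-path computation in SageMath gives $0\leq H(a\otimes b)-\mathrm{Arr}^{0}(a,b)\leq 2$, and the few residual cases with $a,b\in\lbrace\emptyset,\pm\theta\rbrace$ are read off the crystal graph. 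This treats all admissible placements uniformly (the excluded value $\Haff = 0$, your $\eta = -1$, is precisely where the path may fail to exist) and replaces your $729 + 2809$ geometric containment checks by computations that are already mechanical. If you wish to salvage your route, the missing ingredient is exactly the $\eta = -1$ protrusion bound described above; note also that your approach proves Proposition \ref{highest weight right block property proposition} first and derives this proposition from it, inverting the paper's logical order, which is legitimate but should be stated explicitly.
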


Furthermore, we can precisely describe which possible pairs of adjacent columns in a proper Young wall \textit{do} in fact fail condition (\ref{adjacent right block property}).

\begin{Prop}
    A proper Young wall $(y_{k}(n_{k}))_{k=0}^{\infty} \in \Zcal(\lambda)$ fails condition (\ref{adjacent right block property}) on columns $k+1$ and $k$ if and only if $(y_{k+1}(n_{k+1}),y_{k}(n_{k}))$ is of the form
    \begin{align*}
        (y_{\emptyset}(m),y_{\emptyset}(m)), \quad
        (y_{\emptyset}(m),y_{\theta}(m+1)), \quad
        (y_{-\theta}(m),y_{\emptyset}(m+1)), \quad
        (y_{-\theta}(m),y_{\theta}(m+2)).
    \end{align*}
\end{Prop}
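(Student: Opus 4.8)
The plan is to leverage Proposition~\ref{Fock right block proposition} to reduce the whole statement to a finite problem, and then to isolate the failures geometrically. First I would observe that for a proper Young wall the quantity $H(y_{k+1}\otimes y_{k}) + |y_{k+1}|_{0} - |y_{k}|_{0}$ is $\leq 0$ by \eqref{proper equation}, and equals $\Haff(y_{k+1}(n_{k+1})\otimes y_{k}(n_{k})) - 1$ by Definition~\ref{affine energy function} together with the identity $n_{k} = k + |y_{k}|_{0}$. Proposition~\ref{Fock right block proposition} guarantees that condition~\eqref{adjacent right block property} holds on columns $k+1$ and $k$ whenever this quantity is different from $-1$; in particular it holds when the quantity is $0$ or $\leq -2$. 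Hence a failure of \eqref{adjacent right block property} can only occur when $H(y_{k+1}\otimes y_{k}) + |y_{k+1}|_{0} - |y_{k}|_{0} = -1$, equivalently $\Haff(y_{k+1}(n_{k+1})\otimes y_{k}(n_{k})) = 0$ (and similarly with $H'$ in type $F_{4}^{(1)}$). It therefore remains to determine exactly which pairs attaining this value fail \eqref{adjacent right block property}.

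For the forward direction I would compare with the reduced situation of Proposition~\ref{highest weight right block property proposition}, in which the value equals $0$ and \eqref{adjacent right block property} always holds. A value of $-1$ differs from this by a single unit of relative $0$-block offset between the two columns, so at most one block of column $k+1$ can fail to be covered by column $k$, and such an uncovered block can only be a lone block protruding at the very top. Inspecting the Young column patterns of Figure~\ref{Young column patterns}, the only block that can so protrude is the top $0$-block (coloured red). Thus the top of column $k+1$ must carry a $0$-block absent from column $k$, and the only equivalence classes whose representatives differ from the ground-state column $y_{\emptyset}$ by a single top $0$-block are $y_{\emptyset}$ itself and its neighbours $y_{\theta},y_{-\theta}$ along the $0$-arrows $x_{-\theta}\xrightarrow{0}\emptyset\xrightarrow{0}x_{\theta}$ of \eqref{eq:crys}. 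This forces $y_{k+1},y_{k}\in\{y_{\emptyset},y_{\theta},y_{-\theta}\}$. Reading off the relevant energy values from Appendix~\ref{appe:HandH'values}, namely $H(\emptyset\otimes\emptyset)=0$, $H(\emptyset\otimes x_{\theta})=H(x_{-\theta}\otimes\emptyset)=1$ and $H(x_{-\theta}\otimes x_{\theta})=2$ (and likewise for $H'$), the equation $H(y_{k+1}\otimes y_{k}) + |y_{k+1}|_{0} - |y_{k}|_{0} = -1$ then pins down the relative affine shift $n_{k+1}-n_{k}$ in each case, yielding precisely the four displayed forms.

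For the converse I would simply verify that each of the four forms does fail \eqref{adjacent right block property}: superimposing the two columns at the offset dictated by the listed affine shifts, using the explicit pictures of $y_{\emptyset}$, $y_{\theta}$ and $y_{-\theta}$ in Figures~\ref{Young column E} and~\ref{Young column F}, one sees in each case the protruding top $0$-block in column $k+1$ with no matching block in column $k$.

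The main obstacle is the geometric claim in the forward direction: that a value of exactly $-1$ forces the unique uncovered block to be a single top $0$-block, rather than some discrepancy lower down. I expect to establish this by combining the interlocking of the bulk of adjacent columns already implicit in the reduced case (Proposition~\ref{highest weight right block property proposition}) with a direct comparison of the column shapes in Figure~\ref{Young column patterns}. As a safeguard, the finite list of pairs attaining value $-1$ can be generated directly from the energy tables (as in the SageMath computation producing Appendix~\ref{appe:HandH'values}) and checked exhaustively, there being at most $|B|^{2}=729$ and $|B'|^{2}=2809$ such pairs.
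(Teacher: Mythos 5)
Your opening reduction is correct and matches the paper: by Proposition \ref{Fock right block proposition}, a failure of condition (\ref{adjacent right block property}) can only occur when $H(y_{k+1}\otimes y_{k}) + |y_{k+1}|_{0} - |y_{k}|_{0} = -1$, equivalently $\Haff(y_{k+1}(n_{k+1})\otimes y_{k}(n_{k})) = 0$, and your energy bookkeeping pinning down the four affine shifts is also right. The genuine gap is in your forward direction, namely the claim that a value of $-1$ forces ``at most one block of column $k+1$ to fail to be covered, necessarily a lone $0$-block protruding at the very top,'' and hence that $y_{k+1},y_{k}\in\{y_{\emptyset},y_{\pm\theta}\}$. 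This is a non sequitur: the quantity being $-1$ rather than $0$ only records a one-period change in the \emph{relative vertical placement} of the two columns; it does not bound the number or the position of uncovered blocks. Indeed the actual failing configurations show your proposed mechanism cannot be the right one: in the pair $(y_{\emptyset}(m),y_{\emptyset}(m))$ both columns lie in the ground-state class, and the failure arises because equivalence classes contain columns of both orientations ($180^{\circ}$ rotation), so the mismatch between the two columns is an orientation mismatch along the staircase profile, not a single protruding red block. Your inference that the failing columns must ``differ from the ground-state column by a single top $0$-block'' is therefore unjustified, and an argument built on it would not survive.

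The paper obtains the restriction to $\{y_{\emptyset},y_{\pm\theta}\}$ by a completely different mechanism, which is the idea your proposal is missing: condition (\ref{adjacent right block property}) for a pair is equivalent to the existence of a directed path $y_{k+1}(n_{k+1}-1)\rightarrow\dots\rightarrow y_{k}(n_{k})$ in $\Baff$, and when $\Haff = 0$ such a path exists as soon as a minimal path (realizing $\mathrm{Arr}^{0}$) can be padded by one extra $0$-arrow. By Lemma \ref{path to itself lemma}(1), a padding loop $b(n)\rightarrow\dots\rightarrow b(n+1)$ exists precisely when $b\neq\emptyset,\pm\theta$, so non-existence of the path forces both column labels (and the vertices available along the way) into $\{\emptyset,\pm\theta\}$; the proof then finishes with a simple check of the nine resulting configurations. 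Your exhaustive fallback — checking every pair at its $\Haff=0$ shift, at most $|B|^{2}=729$ and $|B'|^{2}=2809$ cases — is logically sound and would complete the proof, but it is a brute-force substitute for the missing idea rather than a repair of your geometric argument, and as written it is only offered as a safeguard, not carried out.
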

\begin{proof}
    We see from the proof of Proposition \ref{Fock right block proposition} that if condition (\ref{adjacent right block property}) fails then $n_{k} = n_{k+1} + H(y_{k+1}\otimes y_{k})$ and $y_{k+1},y_{k}\in \lbrace y_{\emptyset}, y_{\pm\theta} \rbrace$, whereby a simple check completes the proof.
\end{proof}

We can also easily deduce the following.

\begin{Cor} \label{Fock built on ground-state wall}
    In types $E_{6}^{(2)}$ and $F_{4}^{(1)}$ every proper Young wall is built on top of the ground-state wall.
\end{Cor}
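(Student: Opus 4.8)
The plan is to follow the strategy of Corollary \ref{highest weight built on ground-state wall corollary} from the highest weight case, where the full right block property immediately forced every column to contain its ground-state column; here I must instead work with the weakened right block property of Proposition \ref{Fock right block proposition} and account for its finitely many possible failures. Fix a proper Young wall $(y_k(n_k))_{k=0}^{\infty}\in\Zcal(\lambda)$. Since it differs from the ground-state wall in only finitely many blocks, there is some $N$ such that column $k$ coincides with the ground-state column for all $k\geq N$; in particular every such column contains the ground-state column. I would then prove that \emph{every} column contains its ground-state column by downward induction on the column index $k$, the cases $k\geq N$ serving as the base.

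For the inductive step, suppose column $k+1$ contains the ground-state column and consider the pair $(y_{k+1},y_k)$. By Proposition \ref{Fock right block proposition} the local right block property (\ref{adjacent right block property}) holds on columns $k+1$ and $k$ whenever $H(y_{k+1}\otimes y_k)+|y_{k+1}|_0-|y_k|_0\neq -1$ (and similarly with $H'$). In that case every block of column $k+1$ has its counterpart in column $k$, so column $k$ inherits the entire ground-state column and the induction advances. It therefore only remains to treat the exceptional situation in which the energy expression equals $-1$ and (\ref{adjacent right block property}) may genuinely fail.

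In this exceptional situation the preceding Proposition lists the only possible forms of $(y_{k+1}(n_{k+1}),y_k(n_k))$, and in each of the four cases the right-hand column $y_k$ is either $y_{\emptyset}$ or $y_{\theta}$. I would then verify directly that, at the affine height prescribed by the listed configuration, both $y_{\emptyset}$ and $y_{\theta}$ still contain the ground-state column, which completes the induction and hence establishes (\ref{built on ground-state wall property}). The main obstacle is precisely this last verification: one must convert the affine shifts $n_{k+1},n_k$ appearing in the four configurations into genuine vertical positions using the identity $n_k=k+|y_k|_0$, and then read off from the Young column patterns (Figure \ref{Young column patterns}) and ground-state walls (Figure \ref{ground-state walls}) that the ground-state column sits inside $y_k$ in every case. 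The structural point that makes this routine is that a failure of (\ref{adjacent right block property}) can only concern blocks lying strictly above the ground-state level, so the ground-state portion of $y_k$ is never disturbed.
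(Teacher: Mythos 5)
Your proposal is correct and takes essentially the same route as the paper: propagate containment of the ground-state column rightwards from the far-left columns (where the wall coincides with the ground-state wall), using Proposition \ref{Fock right block proposition}. The only difference is one of care, not of method -- the paper's proof is a single sentence that leaves the exceptional pairs implicit, whereas you handle them explicitly via the classification of failing configurations together with the identity $n_k = k + |y_k|_0$; this is a welcome refinement, since in each of the four failing cases that bookkeeping gives $|y_k|_0 > |y_{k+1}|_0$, so the right-hand column sits even higher and containment is never disturbed.
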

\begin{proof}
    This follows from Proposition \ref{Fock right block proposition} since a Young wall differs from the ground-state wall in finitely many blocks, and thus matches it in all columns sufficiently far to the left.
\end{proof}

Nevertheless, it is important to note that not every Young wall satisfying the weakened right block property is proper.

\begin{proof}[Proof of Proposition \ref{Fock right block proposition}]
Throughout this proof we shall refer only to $B$ and $H$ for ease of notation, but remark that exactly the same argument works with $B'$ and $H'$.

Viewing a Young column inside column $k+1$ of the Young wall pattern and mapping it to the Young column with blocks in the same positions but in column $k$ of the Young wall pattern corresponds to the automorphism $z : b(n) \mapsto b(n-1)$ of $\Baff$.

It suffices to show that if $\Haff(a(m)\otimes b(n)) < 2$ and $\Haff(a(m)\otimes b(n)) \not= 0$ then there is a directed path $z(a(m)) = a(m-1) \rightarrow\dots\rightarrow b(n)$, since going along an arrow in $\Baff$ corresponds to adding a block in the Young column model.

Without loss of generality we can take $m = 1$ and thus by Definition \ref{affine energy function} consider $n \geq H(a\otimes b)$ and $n \not= H(a\otimes b) + 1$.
The following lemma -- proved simply by inspecting the crystal graph of $B$ -- allows us to further restrict to the case $n = H(a\otimes b)$.

\begin{Lem} \label{path to itself lemma}
    \begin{enumerate}
        \item There is a path $b(n) \rightarrow\dots\rightarrow b(n+1)$ precisely when $b\not= \emptyset,\pm\theta$. \label{path plus 1}
        \item There is always a path $b(n) \rightarrow\dots\rightarrow b(n+2)$. \label{path plus 2}
        \item There is always a path $b(n) \rightarrow\dots\rightarrow b(n+3)$. \label{path plus 3}
    \end{enumerate}
\end{Lem}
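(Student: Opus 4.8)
The plan is to translate the lemma into a combinatorial statement about directed closed walks in the crystal graph of $B$, and then to settle it using the global shape of that graph together with a few explicit walks. First I would observe that a directed edge of $B^{\mathrm{aff}}$ is an application of some $\tilde f_i$, and that by the definition of the affinization an edge of color $i\neq 0$ leaves the level index unchanged while an edge of color $0$ raises it by one. Consequently a directed path $b(n)\to\dots\to b(n+k)$ is precisely a directed closed walk based at $b$ in the graph of $B$ that uses exactly $k$ arrows of color $0$; equivalently, since $\wt(b(m))=\wt(b)-m\delta$ and $\delta=\alpha_0+2\alpha_1+3\alpha_2+2\alpha_3+\alpha_4$, the colors along any such walk sum to $k\delta$, which already forces exactly $k$ zero-arrows. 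The task thus becomes: for each $b$, determine which $k$ admit a closed walk at $b$ with exactly $k$ zero-arrows.

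Next I would record two structural facts read off from \eqref{eq:crys} and Figure \ref{crystal graph of B}. Deleting the $0$-arrows recovers the classical crystal $B(\theta)$, the connected highest weight crystal of the highest short root, with unique source $x_{\theta}$ and unique sink $x_{-\theta}$ (the elements $r_1,r_2$ bridging the positive and negative root strings); hence for every root or $r_j$ vertex $b$ there are non-zero directed paths $x_{\theta}\rightsquigarrow b$ and $b\rightsquigarrow x_{-\theta}$. Moreover the three exceptional vertices have degenerate local pictures: $\emptyset$ has unique incoming and outgoing arrows $x_{-\theta}\xrightarrow{0}\emptyset$ and $\emptyset\xrightarrow{0}x_{\theta}$, both of color $0$; the classical source $x_{\theta}$ has as its unique incoming arrow the $0$-arrow $\emptyset\xrightarrow{0}x_{\theta}$; and the classical sink $x_{-\theta}$ has as its unique outgoing arrow the $0$-arrow $x_{-\theta}\xrightarrow{0}\emptyset$.

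With these in hand the easier parts fall out quickly. The ``only if'' half of (1) is immediate: any closed walk based at one of $\emptyset, x_{\pm\theta}$ must both leave and return along $0$-arrows that are forced to pass through $\emptyset$, so it uses at least two zero-arrows and $k=1$ is impossible. For (2) I would exhibit a single uniform two-zero-arrow cycle, namely $b\rightsquigarrow x_{-\theta}\xrightarrow{0}\emptyset\xrightarrow{0}x_{\theta}\rightsquigarrow b$ for a root or $r_j$ vertex $b$, and $\emptyset\xrightarrow{0}x_{\theta}\rightsquigarrow x_{-\theta}\xrightarrow{0}\emptyset$ for $b=\emptyset$. For (3) I would iterate part (1) three times whenever $b$ is non-exceptional, and for $b\in\{\emptyset, x_{\pm\theta}\}$ splice a single interior $0$-arrow into the part-(2) cycle: concretely one first produces a non-zero path $x_{\theta}\rightsquigarrow x_{-\theta}$ passing through exactly one root $0$-arrow $x_{-\beta}\xrightarrow{0}x_{\theta-\beta}$ (for example $x_{\theta}\rightsquigarrow\overline{(1321)}\xrightarrow{0}(1000)\rightsquigarrow x_{-\theta}$ in type $E_6^{(2)}$) and then replaces the non-zero $x_{\theta}\rightsquigarrow x_{-\theta}$ segment of that cycle by it, yielding three zero-arrows.

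The step I expect to be the main obstacle is the ``if'' half of (1): showing that every non-exceptional $b$ carries a closed walk with exactly one zero-arrow. Here the uniform backbone argument fails, because the only route that avoids the two-zero-arrow detour through $\emptyset$ must instead use a single interior $0$-arrow $x_{-\beta}\xrightarrow{0}x_{\theta-\beta}$ placed on a closed walk through $b$, i.e. with non-zero paths $b\rightsquigarrow x_{-\beta}$ and $x_{\theta-\beta}\rightsquigarrow b$, where the correct $\beta$ depends on $b$. This forces a finite case analysis over the remaining root vertices together with the $r_j$, which is precisely the ``inspection of the crystal graph'' the statement refers to and which is easily carried out with the SageMath model of Section \ref{Sec: Perfect crystal of E and F}. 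Finally I would note that, since the crystal graph of $B'$ in Figure \ref{crystal graph of B'} has the same qualitative features, the identical three-step scheme proves the analogous statement for $B'$ in type $F_4^{(1)}$.
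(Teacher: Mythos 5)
Your proof is correct, but it is organized quite differently from the paper's, whose entire argument is the single remark that the lemma is ``proved simply by inspecting the crystal graph of $B$''. Your route first translates the statement into the existence of closed directed walks at $b$ in $B$ carrying a prescribed number of $0$-arrows, and then exploits three structural facts: the vertex $\emptyset$ is incident only to the two $0$-arrows $x_{-\theta}\rightarrow\emptyset\rightarrow x_{\theta}$; the vertex $x_{\theta}$ (resp.\ $x_{-\theta}$), being the classical highest (resp.\ lowest) weight element of $B(\theta)$, has this $0$-arrow as its unique incoming (resp.\ outgoing) edge; and every vertex of $B(\theta)$ admits classical paths from $x_{\theta}$ and to $x_{-\theta}$. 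This yields uniform proofs of the ``only if'' half of (1), of (2), and of (3), confining genuine inspection to the ``if'' half of (1), where for each non-exceptional $b$ one must exhibit an interior $0$-arrow $x_{-\beta}\rightarrow x_{\theta-\beta}$ together with classical paths from $b$ to $x_{-\beta}$ and from $x_{\theta-\beta}$ back to $b$. What the paper's blanket inspection buys is brevity; what your argument buys is an explanation of why exactly $\emptyset,x_{\pm\theta}$ are excluded in (1), an argument that transfers verbatim to $B'$, and a sharp delimitation of the one finite check that remains --- though note that this residual check is still case-by-case inspection (or SageMath), so your scheme isolates the case analysis rather than eliminating it. Two minor points: the phrase ``non-zero path \dots\ passing through exactly one root $0$-arrow'' in your part (3) is self-contradictory as written (you mean two classical paths joined by a single interior $0$-arrow, as your explicit example $x_\theta\rightarrow\dots\rightarrow\overline{(1321)}\rightarrow(1000)\rightarrow\dots\rightarrow x_{-\theta}$ makes clear); and the reachability facts you invoke, while standard for crystals of irreducible modules in finite type, can also simply be read off Figures \ref{crystal graph of B} and \ref{crystal graph of B'}.
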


So it remains to confirm the existence of paths $a(0) \rightarrow \dots \rightarrow b(H(a\otimes b))$ in $\Baff$, or equivalently of paths $a \rightarrow \dots \rightarrow b$ in $B$ whose number of $0$-arrows is $H(a\otimes b)$.

We can calculate all $\mathrm{Arr}^{0}(a,b)$ -- defined to be the \textit{minimum} number of $0$-arrows in a path $a \rightarrow \dots \rightarrow b$ in $B$ -- using SageMath \cite{SageMath}.
Since $B$ is isomorphic to the Kirillov-Reshetikhin crystal $B^{1,1}$, the following code outputs its list of edges.

\begin{lstlisting}
sage: K = crystals.kirillov_reshetikhin.LSPaths(['X',n,r],1)
sage: K.digraph().edges()
\end{lstlisting}

With a simple `find and replace' procedure we can turn this into a list {\tt E} of \textit{weighted} edges where $0$-arrows have weight $1$ and all other arrows have weight $1000$.
For technical reasons we must also replace any {\tt Lambda[j]} in {\tt E} with {\tt Lj}.
We then compute a list of minimal path weights between any two vertices in the associated weighted digraph.

\begin{lstlisting}
sage: var('L0 L1 L2 L3 L4')
sage: from sage.graphs.base.boost_graph import floyd_warshall_shortest_paths
sage: D = DiGraph(E,weighted=True)
sage: floyd_warshall_shortest_paths(D)
\end{lstlisting}

The final digit of each minimal weight is precisely $\mathrm{Arr}^{0}(a,b)$.

By comparing with the list of $H(a\otimes b)$ values calculated in Section \ref{Sec: Perfect crystal of E and F} we see that all
\begin{align*}
    0 \leq H(a\otimes b) - \mathrm{Arr}^{0}(a,b) \leq 2
\end{align*}
and hence by Lemma \ref{path to itself lemma} there is a path $a(0) \rightarrow \dots \rightarrow b(H(a\otimes b))$ in $\Baff$ whenever $a\not= \emptyset,\pm\theta$ or $b\not= \emptyset,\pm\theta$.
The remaining cases are easily verified by inspecting the crystal graph of $B$.
\end{proof}

\newpage

\appendix
\section{Values of the energy functions \texorpdfstring{$H$}{H} and \texorpdfstring{$H'$}{H'}}\label{appe:HandH'values}
\renewcommand{\arraystretch}{2.2}

\begin{table}[H]
\centering
\rotatebox{90}{
\begin{minipage}{0.9\textheight}
\resizebox{0.9\textheight}{!}{
% [inline block 1: 2 envs, 50116 chars -> data_tex | \begin{tabular}{|c||*{27}{c|}} \hline...]

}
\end{minipage}
}
\caption{The energy function $H'(b\otimes a)$ in type $F_{4}^{(1)}$}
\label{F4(1) energy function table}
\end{table}

\section{Crystal graphs of the Young column realizations for \texorpdfstring{$B$}{B} and \texorpdfstring{$B'$}{B'}}\label{realization of B and B'}

\input{Young_column_realization_E}
\input{Young_column_realization_F}

\section{Top parts of the crystals \texorpdfstring{$\mathcal Y(\Lambda_0)$}{Y(Lambda0)} and \texorpdfstring{$\mathcal Z(\Lambda_0)$}{Z(Lambda0)}}\label{top crystal}
\input{Top_crystal_E}
\input{Top_crystal_F}

\pagebreak

\begin{bibsection}
\begin{biblist}

\bib{BFKL06}{article}{
    title={Level 1 perfect crystals and path realizations of basic representations at q=0},
    author={G. Benkart},
    author={I. Frenkel},
    author={S.-J. Kang},
    author={H. Lee},
    journal={Int. Math. Res. Not.},
    volume={2006},
    number={10312},
    pages={1--28},
    year={2006},
    note={\url{https://doi.org/10.1155/IMRN/2006/10312}},
}

\bib{FHKS23}{article}{
    title={Young wall construction of level-1 highest weight crystals over $U_{q}(D_{4}^{(3)})$ and $U_{q}(G_{2}^{(1)})$},
    author={Z. Fan},
    author={S. Han},
    author={S.-J. Kang},
    author={Y.-S. Shin},
    journal={J. Algebra},
    year={2023},
    note={\url{https://doi.org/10.1016/j.jalgebra.2023.08.001}},
}

\bib{JM11}{article}{
    title={On Demazure crystals for $U_{q}(G_{2}^{(1)})$},
    author={R. L. Jayne},
    author={K. C. Misra},
    journal={Proc. Amer. Math. Soc.},
    volume={139},
    number={7},
    pages={2343--2356},
    year={2011},
    note={\url{https://doi.org/10.1090/S0002-9939-2010-10663-9}},
}

\bib{HKL04}{article}{
    title={Young wall realization of crystal graphs for $U_{q}(C_{n}^{(1)})$},
    author={J. Hong},
    author={S.-J. Kang},
    author={H. Lee},
    journal={Comm. Math. Phys.},
    volume={244},
    number={1},
    pages={111--131},
    year={2004},
    note={\url{https://doi.org/10.1007/s00220-003-0966-6}},
}

\bib{Kang03}{article}{
    title={Crystal bases for quantum affine algebras and combinatorics of Young walls},
    author={S.-J. Kang},
    journal={Proc. London Math. Soc.},
    volume={86},
    number={1},
    pages={29--69},
    year={2003},
    note={\url{https://doi.org/10.1112/S0024611502013734}},
}

\bib{KMN1}{article}{
    title={Affine crystals and vertex models},
    author={S.-J. Kang},
    author={M. Kashiwara},
    author={K. C. Misra},
    author={T. Miwa},
    author={T. Nakashima},
    author={A. Nakayashiki},
    journal={Int. J. Mod. Phys. A},
    volume={7},
    number={supp01a},
    pages={449--484},
    year={1992},
    note={\url{https://doi.org/10.1142/S0217751X92003896}},
}

\bib{KMN2}{article}{
    title={Perfect crystals of quantum affine Lie algebras},
    author={S.-J. Kang},
    author={M. Kashiwara},
    author={K. C. Misra},
    author={T. Miwa},
    author={T. Nakashima},
    author={A. Nakayashiki},
    journal={Duke Math. J.},
    volume={68},
    number={3},
    pages={499--607},
    year={1992},
    note={\url{https://doi.org/10.1215/S0012-7094-92-06821-9}},
}

\bib{KK03}{inproceedings}{
    title={Fock space representations for the quantum affine algebra $U_q(C_2^{(1)})$},
    author={S.-J. Kang},
    author={J.-H. Kwon},
    conference={
    title={Combinatorial and geometric representation theory},
    date={October 2001},
    address={Seoul National University, Seoul, Korea},
    },
    book={
    volume={325},
    publisher={Amer. Math. Soc.},
    series={Contemp. Math.},
    date={2003},
    },
    pages={109--131},
    note={\url{https://doi.org/10.1090/conm/325}},
}

\bib{KK04}{article}{
	title={Crystal bases of the Fock space representations and string functions},
	author={S.-J. Kang},
	author={J.-H. Kwon},
	journal={J. Algebra},
	volume={280},
	number={1},
	pages={313--349},
	year={2004},
	note={\url{https://doi.org/10.1016/J.JALGEBRA.2004.04.013}},
}

\bib{KK08}{article}{
    title={Fock space representations of quantum affine algebras and generalized Lascoux-Leclerc-Thibon algorithm},
    author={S.-J. Kang},
    author={J.-H. Kwon},
    journal={J. Korean Math. Soc.},
    volume={45},
    number={4},
    pages={1135--1202},
    year={2008},
    note={\url{https://doi.org/10.4134/JKMS.2008.45.4.1135}},
}

\bib{KL06}{article}{
    title={Higher level affine crystals and Young walls},
    author={S.-J. Kang},
    author={H. Lee},
    journal={Algebr. Represent. Theory},
    volume={9},
    number={6},
    pages={593--632},
    year={2006},
    note={\url{https://doi.org/10.1007/s10468-006-9013-6}},
}

\bib{KL09}{article}{
    title={Crystal bases for quantum affine algebras and Young walls},
    author={S.-J. Kang},
    author={H. Lee},
    journal={J. Algebra},
    volume={322},
    number={6},
    pages={1979--1999},
    year={2009},
    note={\url{https://doi.org/10.1016/j.jalgebra.2009.06.010}},
}

\bib{KM94}{article}{
	title={Crystal bases and tensor product decompositions of $U_q(G_2)$-module},
	author={S.-J. Kang},
	author={K.C.Misra},
	journal={J. Algebra},
	volume={163},
	number={3},
	pages={675-691},
	year={1994},
	note={\url{https://doi.org/10.1006/jabr.1994.1037}},
}

\bib{Kas90}{article}{
    title={Crystalizing the $q$-analogue of universal enveloping algebras},
    author={M. Kashiwara},
    journal={Comm. Math. Phys.},
    volume={133},
    number={2},
    pages={249--260},
    year={1990},
    note={\url{https://doi.org/10.1007/BF02097367}},
}

\bib{Kas91}{article}{
    title={On crystal bases of the $q$-analogue of universal enveloping algebras},
    author={M. Kashiwara},
    journal={Duke Math. J.},
    volume={63},
    number={2},
    pages={465--516},
    year={1991},
    note={\url{https://doi.org/10.1215/S0012-7094-91-06321-0}},
}

\bib{Kas02}{article}{
    title={On level zero representations of quantized affine algebras},
    author={M. Kashiwara},
    journal={Duke Math. J.},
    volume={112},
    number={1},
    pages={117--175},
    year={2002},
    note={\url{https://doi.org/10.1215/S0012-9074-02-11214-9}},
}

\bib{KMOY07}{article}{
    title={Perfect crystals for $U_q(D_4^{(3)})$},
    author={M. Kashiwara},
    author={K. C. Misra},
    author={M. Okado},
    author={D. Yamada},
    journal={J. Algebra},
    volume={317},
    number={1},
    pages={392--423},
    year={2007},
    note={\url{https://doi.org/10.1016/j.jalgebra.2007.02.021}},
}

\bib{KMPY96}{article}{
    title={Perfect crystals and q-deformed Fock spaces},
    author={M. Kashiwara},
    author={T. Miwa},
    author={J.-U. H. Petersen},
    author={C. M. Yung},
    journal={Sel. Math. New Ser.},
    volume={2},
    number={3},
    pages={415--499},
    year={1996},
    note={\url{https://doi.org/10.1007/BF01587950}},
}

\bib{KMS95}{article}{
    title={Decomposition of $q$-deformed Fock spaces},
    author={M. Kashiwara},
    author={T. Miwa},
    author={E. Stern},
    journal={Sel. Math. New Ser.},
    volume={1},
    pages={787--805},
    year={1995},
    note={\url{https://doi.org/10.1007/BF01587910}},
}

\bib{KN94}{article}{
    title={Crystal Graphs for Representations of the $q$-Analogue of Classical Lie Algebras},
    author={M. Kashiwara},
    author={T. Nakashima},
    journal={J. Algebra},
    volume={165},
    number={2},
    pages={295--345},
    year={1994},
    note={\url{https://doi.org/10.1006/jabr.1994.1114}},
}

\bib{KS04}{article}{
	title={Crystal bases and generalized Lascoux–Leclerc–Thibon (LLT) algorithm for the quantum affine algebra $U_q(C_n^{(1)})$},
	author={J.-A. Kim},
	author={D.-U. Shin},
	journal={J. Math. Phys},
	volume={45},
	number={12},
	pages={4878–4895},
	year={2004},
	note={\url{https://doi.org/10.1063/1.1811791}},
}

\bib{Lau23}{article}{
    title={Young wall realizations of level 1 irreducible highest weight and Fock space crystals of quantum affine algebras in type E},
    author={D. Laurie},
    journal={arXiv preprint},
    year={2023},
    eprint={arXiv:2311.03905},
}

\bib{Lus90}{article}{
    title={Canonical bases arising from quantized enveloping algebras},
    author={G. Lusztig},
    journal={J. Amer. Math. Soc.},
    volume={3},
    number={2},
    pages={447--498},
    year={1990},
    note={\url{https://doi.org/10.2307/1990961}},
}

\bib{Lus91}{article}{
	title={Quivers, perverse sheaves, and quantized enveloping algebras},
	author={G. Lusztig},
	journal={J. Amer. Math. Soc.},
	volume={4},
	number={2},
	pages={365--421},
	year={1991},
	note={\url{https://doi.org/10.2307/2939279}},
}

\bib{MM90}{article}{
	title={Crystal base for the basic representation of $U_q(\hat{\mathfrak{sl}}(n))$},
	author={K. C. Misra},
	author={T. Miwa},
	journal={Comm. Math. Phys.},
	volume={134},
	pages={79--88},
	year={1990},
	note={\url{https://doi.org/10.1007/BF02102090}},
}

\bib{MMO10}{article}{
    title={Zero action on perfect crystals for $U_q(G_2^{(1)})$},
    author={K. C. Misra},
    author={M. Mohamad},
    author={M. Okado},
    journal={SIGMA},
    volume={6},
    pages={022},
    year={2010},
    note={\url{http://dx.doi.org/10.3842/SIGMA.2010.022}},
}

\bib{Pre04}{article}{
	title={Fock space representations and crystal bases for  $C_n^{(1)}$},
	author={A. Premat},
	journal={J. Algebra},
	volume={278},
	number={1},
	pages={227--241},
	year={2004},
	note={\url{https://doi.org/10.1016/j.jalgebra.2004.01.028}},
}

\bib{SageMath}{manual}{
    author={Developers, The~Sage},
    title={{S}agemath, the {S}age {M}athematics {S}oftware {S}ystem ({V}ersion 9.8)},
    date={2023},
    note={\tt{https://www.sagemath.org}. \url{https://doi.org/10.5281/zenodo.593563}}
}

\bib{Ste95}{article}{
    title={Semi-infinite wedges and vertex operators},
    author={E. Stern},
    journal={Int. Math. Res. Not.},
    volume={1995},
    number={4},
    pages={201--220},
    year={1995},
    note={\url{https://doi.org/10.1155/S107379289500016X}},
}

\bib{Yam98}{article}{
    title={Perfect crystals of $U_q(G_2^{(1)})$},
    author={S. Yamane},
    journal={J. Algebra},
    volume={210},
    number={2},
    pages={440--486},
    year={1998},
    note={\url{https://doi.org/10.1006/jabr.1998.7597}},
}

\end{biblist}
\end{bibsection}

\end{document}